\theoremstyle{plain}
\newtheorem{theorem}{Theorem}[section]
\newtheorem{thm}[theorem]{Theorem}
\newtheorem{cor}[theorem]{Corollary}
\newtheorem{lemma}[theorem]{Lemma}
\newtheorem{lem}[theorem]{Lemma}
\newtheorem{proposition}[theorem]{Proposition}
\newtheorem{prop}[theorem]{Proposition}
\newtheorem{definition}[theorem]{Definition}
\newtheorem{remark}[theorem]{Remark}
\newtheorem{rem}[theorem]{Remark}
\numberwithin{equation}{section}
\let\geq\geqslant                 \let\leq\leqslant
\let\ge\geqslant                  \let\le\leqslant
\def\les{{\begin{array}{c}<\\[-10pt]\scriptstyle{\frown}\end{array}}}
\def\ges{{\begin{array}{c}>\\[-10pt]\scriptstyle{\frown}\end{array}}}
\renewcommand{\lesssim}{\les}
\renewcommand{\gtrsim}{\ges}
\newcommand{\ep}{\varepsilon}
\def\ZO{\mathrm{Z}_\Omega}
\begin{document}

\title[Crossing probabilities in topological rectangles for the FK-Ising model]{Crossing probabilities in topological rectangles \\ for the critical planar FK-Ising model}
\author[Dmitry Chelkak]{Dmitry Chelkak$^\mathrm{a,b}$}
\author[Hugo Duminil-Copin]{Hugo Duminil-Copin$^\mathrm{c}$}
\author[Cl\'ement Hongler]{Cl\'ement Hongler$^\mathrm{d}$}

\date{\today}

\thanks{\textsc{${}^\mathrm{A}$ St.~Petersburg Department of Steklov Mathematical Institute (PDMI RAS). Fontanka~27, 191023 St.~Petersburg, Russia}}
\thanks{\textsc{${}^\mathrm{B}$ Chebyshev Laboratory, Department of Mathematics and Mechanics,
St.~Petersburg State University. 14th Line, 29b, 199178 St.~Petersburg, Russia}}
\thanks{\textsc{${}^\mathrm{C}$ Section de Math\'ematiques, Universit\'e de Gen\`eve. 2-4 rue du Li\`evre, Case postale~64, 1211 Gen\`eve 4, Suisse}}
\thanks{\textsc{${}^\mathrm{D}$ Department of Mathematics, Columbia University. 2990 Broadway, New York, NY 10027, USA}}

\thanks{{\it E-mail addresses:} \texttt{dchelkak@pdmi.ras.ru}, \texttt{hugo.duminil@unige.ch}, \texttt{hongler@math.columbia.edu}}

\begin{abstract}
We consider the FK-Ising model in two dimensions at criticality. We
obtain bounds on crossing probabilities of arbitrary topological
rectangles, uniform with respect to the boundary conditions, generalizing
results of \cite{DHN11} and \cite{CS12}. Our result relies on new
discrete complex analysis techniques, introduced in \cite{Che12}.

We detail some applications, in particular the computation of so-called
universal exponents, the proof of quasi-multiplicativity properties of arm probabilities, and bounds on crossing probabilities for the classical Ising model.
\end{abstract}

\maketitle



\section{Introduction}

The Ising model is one of the simplest and most fundamental models
in equilibrium statistical mechanics. It was proposed as a model
for ferromagnetism by Lenz in 1920
\cite{Lenz}, and then studied by Ising \cite{Ising}, in an attempt to provide a microscopic explanation
for the thermodynamical behavior of magnets. In 1936, Peierls \cite{Peierls}
showed that the model exhibits a phase transition at positive temperature
in dimensions two and higher. After the celebrated exact derivation
of the free energy of the two-dimensional model by Onsager in 1944
\cite{Onsager}, the Ising model became one of the most investigated
models in the study of phase transitions and in statistical mechanics.
See \cite{Nis05,Nis09} for a historical review of the theory.

Recently, spectacular progress was made towards the rigorous description
of the continuous scaling limit of 2D lattice models at critical temperature,
in particular the Ising model \cite{Smi10,CS12}, notably thanks to the
introduction of Schramm's SLE curves (see \cite{Smi06} for a review
of recent progress in this direction). In this paper, we develop tools
that improve the connection between the discrete Ising model and the continuous
objects describing its scaling limit.

Recall that the Ising model is a random assignment of $\pm1$ spins
to the vertices of a graph $G$, where the probability of a spin configuration
$\left(\sigma_{x}\right)_{x\in G}$ is proportional to $\exp\left(-\beta H\left(\sigma\right)\right)$. The parameter
$\beta>0$ is the inverse temperature and $H(\sigma)$ is the energy,
defined as $-\sum_{x\sim y}\sigma_{x}\sigma_{y}$ (the sum is over
all pairs of adjacent vertices). 
On the square grid $\mathbb{Z}^{2}$,
an order/disorded phase transition occurs at the critical parameter
value $\beta_\mathrm{crit}:=\frac{1}{2}\ln\left(\sqrt{2}+1\right)$. Interfaces at criticality were proved to converge to SLE(3) in \cite{CDHKS13}. We refer to \cite{Dum13} for a definition of the Ising model in infinite volume and a description of the phase transition.
In order to avoid confusion with the FK-Ising model defined below,
we will call the Ising model the \emph{spin}-\emph{Ising} \emph{model.}

In 1969, Fortuin and Kasteleyn \cite{FK72} introduced
a dependent bond percolation model, called \emph{FK percolation} or
\emph{random-cluster} model, that provides a powerful geometric representation
of a variety of models, among which the Ising model. The FK model
depends on two positive parameters, usually denoted by $p$ and $q$.
Given $p\in\left[0,1\right]$ and $q>0$, the FK$\left(p,q\right)$
model on a graph $G$ is a model on random subgraphs of $G$ containing all
its vertices: the probability of a configuration $\omega\subset G$
is proportional to
\[
\left(\frac{p}{1-p}\right)^{o\left(\omega\right)}q^{k\left(\omega\right)},
\]
where $o\left(\omega\right)$ is the number of edges of $\omega$
and $k\left(\omega\right)$ the number of \emph{clusters} of $\omega$
(maximal connected components of vertices). In what follows, an edge of $\omega$ is called {\em open}. An edge of $\mathbb Z^2$ which is not in $\omega$ is called {\em closed}.

We call the FK model with $q=2$ the \emph{FK-Ising model}. In this
case, the model provides a graphical representation of the spin-Ising
model, as is best seen through the so-called Edwards-Sokal coupling~\cite{ES88}: if one samples an FK-Ising
configuration on $G$, assigns a $\pm1$ spin to each cluster by an
independent fair coin toss, and gives to each vertex of $G$ the spin of
its cluster, the configuration thus obtained is a sample of the spin-Ising model on $G$ at inverse
temperature $\beta=\frac12\log (1-p)$.
Via the Edwards-Sokal coupling, the FK-Ising model describes
how the influence between the spins of the spin-Ising model propagates across the graph: conditionally
on the FK-Ising configuration, two spins of the Ising model are
equal if they belong to the same cluster and independent otherwise.

In this paper, we will work with the \emph{critical FK-Ising model},
hence the FK model with parameter values $q=2$ and $p=p_\mathrm{crit}={\sqrt{2}}/({\sqrt{2}\!+\!1}$), which corresponds to the critical parameter $\beta_\mathrm{crit}=\frac12\log(1+\sqrt 2)$ of the spin-Ising model on $\mathbb{Z}^2$. Let us mention that FK-Ising interfaces at criticality were proved to converge to SLE(16/3) in \cite{CDHKS13}.

\subsection{Main statement\label{sub:main-statement}}

We obtain uniform bounds for crossing probabilities
for the critical FK-Ising model on general topological rectangles. These bounds were originally obtained for Bernoulli percolation in the case of ``standard'' rectangles \cite{Rus78,SW78}.

Given a topological rectangle $\left(\Omega,a,b,c,d\right)$ (i.e.
a bounded simply connected subdomain of $\mathbb{Z}^{2}$ with four marked boundary
points listed counterclockwise) and boundary conditions $\xi$ (see Section \ref{sec:FK-Ising-model} for a formal definition),
denote by $\phi_{\Omega}^{\xi}$ the critical FK-Ising probability
measure on $\Omega$ with boundary conditions $\xi$ and by $\{\left(ab\right)\leftrightarrow\left(cd\right)\}$
the event that there is a \emph{crossing }between the arcs $\left(ab\right)$
and $\left(cd\right)$, i.e. that $\left(ab\right)$ and $\left(cd\right)$
are connected by a path of edges in the FK configuration $\omega$.

Let us denote by $\ell_{\Omega}\left[\left(ab\right),\left(cd\right)\right]$
the \emph{discrete extremal length} between $\left(ab\right)$ and $\left(cd\right)$ in $\Omega$ with unit conductances (see Section~\ref{sub:discrete-extremal-length}
for a precise definition). Informally speaking, this extremal length 
measures the distance between $\left(ab\right)$ and $\left(cd\right)$
from a random walk or electrical resistance point of view. It is worth noting that $\ell_{\Omega}\left[\left(ab\right),\left(cd\right)\right]$ is scale invariant and uniformly comparable to its continuous counterpart -- the classical extremal length (inverse of the modulus) of a topological rectangle, see \cite[Proposition ~6.2]{Che12}.

Our main result is the following uniform bound for FK-Ising crossing probabilities in
terms of discrete extremal length only:
\begin{thm}
\label{thm:main-thm}\label{strong RSW} For each $L>0$ there exists
$\eta=\eta(L)\in (0,1)$ such that, for any topological rectangle $\left(\Omega,a,b,c,d\right)$ and any boundary conditions $\xi$, the following is fulfilled:

\begin{quotation}

\noindent (i)\phantom{i} if $\ell_{\Omega}\left[\left(ab\right),\left(cd\right)\right] \leq L$\,, then $\mathbb{\phi}_{\Omega}^{\xi}\left[\left(ab\right)\leftrightarrow\left(cd\right)\right] \geq \eta$;

\smallskip

\noindent (ii) if $\ell_{\Omega}\left[\left(ab\right),\left(cd\right)\right] \geq L^{-1}$, then $\mathbb{\phi}_{\Omega}^{\xi}\left[\left(ab\right)\leftrightarrow\left(cd\right)\right] \leq 1-\eta$.

\end{quotation}
\end{thm}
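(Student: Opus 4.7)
The plan is to prove (i) directly and derive (ii) from it via the self-duality of the critical FK-Ising model. For the duality step: if no primal crossing exists between $(ab)$ and $(cd)$, then a dual crossing joins $(bc)$ and $(da)$ in the dual rectangle $\Omega^*$, with the dual boundary conditions obtained by swapping wired and free on each arc of $\xi$. Because the discrete extremal length satisfies the duality relation $\ell_\Omega[(ab),(cd)] \cdot \ell_{\Omega^*}[(bc),(da)] \asymp 1$ (a consequence of the discrete complex analysis of \cite{Che12}), the hypothesis $\ell_\Omega[(ab),(cd)] \geq L^{-1}$ turns into $\ell_{\Omega^*}[(bc),(da)] \leq \widetilde{L}(L)$; then (i) applied to the dual rectangle yields a positive lower bound on the dual crossing probability, whose complement is the primal crossing probability, which establishes (ii).

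For (i), the first reduction is to the worst boundary conditions. Since $\{(ab)\leftrightarrow(cd)\}$ is increasing and FK-Ising is monotone in $\xi$ in the FKG order, the minimum of $\phi_\Omega^\xi[(ab)\leftrightarrow(cd)]$ is attained under the all-free measure. The main task is then to bound this free-measure crossing probability from below by a function of $L$ alone, uniformly over $\Omega$. For ``standard'' Euclidean rectangles of bounded aspect ratio the required estimate is already available from \cite{DHN11} and \cite{CS12}, where the FK-Ising fermionic observable quantifies crossings. The novelty is to extend this to arbitrary topological rectangles: the discrete complex analysis of \cite{Che12} should imply that $\ell_\Omega[(ab),(cd)] \leq L$ forces the existence of a bounded-size covering of $\Omega$ by ``standard'' subrectangles whose short crossings can be chained into a path from $(ab)$ to $(cd)$. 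Applying the RSW bound within each subrectangle and gluing by FKG then produces the global crossing with probability at least $\eta(L)>0$.

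The step I expect to be the main obstacle is bridging two different worldviews: discrete extremal length is a potential-theoretic quantity (a random-walk effective resistance), while FK crossings are percolative events. The key technical lemma is therefore a quantitative decomposition: a topological rectangle of bounded extremal length should be tileable by $O(L)$ subrectangles, each either of bounded Euclidean aspect ratio (amenable to the earlier RSW bounds) or degenerate in a way that forces its crossing probability near one. Producing such a tiling with controlled boundary interaction requires the full discrete complex analysis apparatus of \cite{Che12} (comparability between discrete and continuous extremal length, its stability under coarse graining, and discrete Beurling-type estimates), and guaranteeing that the chaining of crossings survives all admissible inherited boundary conditions is the delicate point, where FKG monotonicity, self-duality, and the fermionic observable estimates must be orchestrated together.
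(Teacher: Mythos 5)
Your duality reduction for part (ii) is exactly right and matches the paper: if no primal crossing joins $(ab)$ to $(cd)$ then a dual crossing joins the complementary arcs, and the reciprocity $\ell_\Omega[(ab),(cd)]\cdot\ell_{\Omega^*}[(bc),(da)]\asymp 1$ turns the hypothesis of (ii) into the hypothesis of (i) on the dual rectangle. Likewise the reduction of (i) to free boundary conditions by FKG monotonicity is correct and is the paper's first step.

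The proposed proof of (i), however, has a genuine gap. You propose to show that $\ell_\Omega[(ab),(cd)]\leq L$ forces a covering of $\Omega$ by $O(L)$ ``standard'' subrectangles that can be chained into an $(ab)$--$(cd)$ crossing, and then to apply the RSW bound of \cite{DHN11} in each. This fails because a bound on the extremal length controls only a conformally invariant, global quantity and says nothing whatsoever about the local geometry of $\partial\Omega$. A topological rectangle of extremal length, say, $1$ can have a boundary with arbitrarily fine fjords, staircases, or near-pinch points at every scale; no tiling by boxes of bounded aspect ratio exists at any single scale, and a multiscale tiling would have unbounded cardinality. Worse, any subrectangle adjacent to $(ab)$ or $(cd)$ must have one side lying along the rough boundary, so it is not a ``standard'' rectangle in the sense of \cite{DHN11}, and the crossing estimate you wish to invoke is unavailable there --- this is precisely the situation the theorem is supposed to resolve, so the argument is circular. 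The loophole ``or degenerate in a way that forces its crossing probability near one'' has no content until one says what ``degenerate'' means and proves that the degenerate and non-degenerate pieces fit together; that is the entire difficulty.

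The paper takes a different and non-tiling route. Setting
\[
\mathbf{N}:=\sum_{u\in(ab)}\sum_{v\in(cd)}\phi_{\Omega}^{\emptyset}[u\leftrightarrow v]\,\mathbb{I}_{u\leftrightarrow v},
\]
one lower-bounds the first moment $\phi_\Omega^\emptyset[\mathbf{N}]$ by a random-walk partition function in the dual domain (via the fermionic observable estimate of \cite{DHN11} applied pointwise), upper-bounds the second moment $\phi_\Omega^\emptyset[\mathbf{N}^2]$ by a power of the primal partition function $\ZO[(ab),(cd)]$ (via the separator and factorization machinery of \cite{Che12}, which lets one split $\Omega$ across cross-cuts with controlled harmonic measure and apply the observable bound in each piece), and then applies Cauchy--Schwarz to get $\phi_\Omega^\emptyset[(ab)\leftrightarrow(cd)]=\phi_\Omega^\emptyset[\mathbf{N}>0]\geq\phi_\Omega^\emptyset[\mathbf{N}]^2/\phi_\Omega^\emptyset[\mathbf{N}^2]$. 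The link between the hypothesis $\ell_\Omega[(ab),(cd)]\leq L$ and these partition-function bounds is Theorem~\ref{thm:ell-Z-bounds}, which replaces your intuition about tilings; the separator construction (Theorem~\ref{thm:existence-separators}, Corollary~\ref{cor:small-separator}) replaces your intuition about gluing. Your proposal correctly senses that the bridge between potential theory and percolation events is the crux, but the bridge is a second-moment method over pairs of boundary points, not a geometric decomposition into rectangles.
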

Such  bounds on crossing probabilities, uniform with respect to the boundary
conditions, have been obtained for standard rectangles of the form $[a,b]\times[c,d]$ in \cite[Theorem~1]{DHN11}.
The limit (as the mesh size of the lattice tends to 0) of crossing probabilities  in arbitrary
domains with specific (free/wired/free/wired) boundary conditions have been derived in \cite[Theorem~6.1]{CS12}.
In Theorem~\ref{thm:main-thm}, the crossing bounds hold in \emph{arbitrary topological
rectangles with arbitrary boundary conditions}. In particular, they are independent
of the local geometry of the boundary. Roughly speaking, our result
is a generalization of \cite{DHN11} to possibly {}``rough'' discrete
domains; this is for instance needed in order to deal with domains
generated by random interfaces.

As in \cite{DHN11}, the proof relies on discrete complex analysis.
In order to connect the FK-Ising model with discrete complex analysis objects,
we invoke the discrete holomorphic observable  introduced
by Smirnov \cite{Smi10} in the context of the FK-Ising model, as well as a representation of crossing probabilities in
terms of harmonic measures introduced in \cite{CS12}.
To obtain the desired estimate, we adapt these results and use
new harmonic measure techniques from \cite{Che12}.

\subsection{Applications}

Estimates on crossing probabilities play a very important role in rigorous
statistical mechanics, in particular for planar percolation models. Noteworthy, they
constitute the key ingredient enabling the use of the following techniques:
\begin{itemize}
\item {\em Spatial decorrelation}: probabilities of certain events in disjoint ``well separated'' sets can be factorized at the expense
of uniformly controlled constants. This factorization is based on the spatial Markov property of the model (see
Section~\ref{sec:FK-Ising-model} for details) and estimates on crossing probabilities.

\item {\em Regularity estimates and precompactness}: the uniform bounds for crossing probabilities
are instrumental to pass to the scaling limit. Namely, these bounds imply regularity estimates on the discrete random curves arising in the
model.

\item {\em Couplings of discrete and continuous interfaces}: it is useful to couple
the critical FK-Ising interfaces and their scaling limit $\mathrm{SLE}(16/3)$
so that they are close to each other (for instance whenever
the $\mathrm{SLE}(16/3)$ curve hits the boundary of the domain, so does the discrete interface with high probability). Such couplings
are in particular useful in order to obtain the full scaling limit of discrete interfaces \cite{CN06,KS12}.

\item {\em Discretization of continuous results}: thanks to uniform estimates,
one can relate the finite-scale properties of discrete models to their continuous
limits, and transfer results from the latter to the former. Thus, the so-called
arm exponents for the critical FK-Ising model can be related to the
$\mathrm{SLE}(16/3)$ arms exponents, which in turn can be computed using stochastic
calculus techniques.

\end{itemize}
While the RSW-type bounds of \cite{DHN11} already allow for a number of
interesting applications (see for instance \cite{CN09,LS12,CGN12,DGP11}),
the stronger version of such estimates provided by Theorem
\ref{thm:main-thm} increases the scope of applications. In particular, we get several new consequences that are described below in more details.

\begin{definition} In the rest of this paper, for two real-valued quantities $X$ and $Y$ depending on a certain number of parameters, we will write $X\lesssim Y$
if there exists an absolute constant $c>0$ such that $X\leq cY$ and $X\asymp Y$
if $X\lesssim Y$ and $Y\lesssim X$ at the same time.
\end{definition}

Define $\Lambda_N:=[-N,N]^2\subset\mathbb{Z}^2$. Dual edges are edges of the dual lattice $(\mathbb Z^2)^*$, a dual edge is called dual-open/dual-closed if the corresponding edge of $\mathbb{Z}^2$ that it intersects in its middle is closed/open, respectively.

We say that a path is {\em of type $1$} if it is composed of primal edges that are all open. We say that a path is {\em of type 0} if it is composed of dual edges that are all dual-open. When fixing $n<N$ and an annulus $\Lambda_N\setminus\Lambda_n$, a self-avoiding path of type 0 or 1 connecting the inner to the outer boundary of the annulus is called an {\em arm}.

Given $n<N$ and $\sigma=\sigma_1\ldots\sigma_j\in\{0,1\}^j$\,, define $A_\sigma(n,N)$ to be the event that
there are $j$ \emph{disjoint} arms $\gamma_k$ from the inner to the outer boundary of $\Lambda_N\setminus\Lambda_n$ which are of types $\sigma_k$, $1\leq k\leq j$, where the arms are indexed in counterclockwise order. E.g., $A_1(n,N)$ denotes the
event that there exists an open path from the inner to the outer boundary of $\Lambda_N\setminus\Lambda_n$.

The following theorem is crucial in the understanding of arm exponents. The proof follows ideas going back to Kesten \cite{Kes87}. Importantly, it heavily relies on Theorem~\ref{thm:main-thm} and we do not know how to derive it from previously known results on crossing probabilities.

\smallskip

Let $\phi_{\mathbb Z^2}$ denotes the unique infinite-volume FK-Ising measure at criticality.

\begin{thm}[{\bf Quasi-multiplicativity}]\label{quasi-multiplicativity}
  Fix a sequence $\sigma$. For all
  $n_1<n_2<n_3$,   $$\phi_{\mathbb Z^2}\big[A_{\sigma}(n_1,n_3)\big] \asymp \phi_{\mathbb Z^2} \big[ A_{\sigma}
  (n_1,n_2) \big]\,\phi_{\mathbb Z^2}\big[A_{\sigma}(n_2,n_3)\big],$$
  where the constants in $\asymp$ depend on $\sigma$ only.
\end{thm}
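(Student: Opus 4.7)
The plan is to follow the classical Kesten framework for quasi-multiplicativity of arm events in 2D critical percolation models, using Theorem~\ref{thm:main-thm} as the unique probabilistic input. The strength we need from Theorem~\ref{thm:main-thm} is not just RSW in smooth rectangles but uniform crossing estimates in arbitrary topological rectangles with arbitrary boundary conditions, because arms themselves carve out domains whose inner boundaries are rough and whose induced boundary conditions are only partially known. Throughout, the spatial Markov property of FK-Ising (Section~\ref{sec:FK-Ising-model}) lets us localize everything in suitable dyadic annuli and control the effect of the exterior through the induced boundary conditions on the enclosed rectangles.

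For the upper bound $\phi_{\mathbb Z^2}[A_\sigma(n_1,n_3)]\lesssim \phi_{\mathbb Z^2}[A_\sigma(n_1,n_2)]\,\phi_{\mathbb Z^2}[A_\sigma(n_2,n_3)]$ I would use the inclusion $A_\sigma(n_1,n_3)\subseteq A_\sigma(n_1,n_2)\cap A_\sigma(n_2,n_3)$ together with an intermediate buffer annulus, say $\Lambda_{2n_2}\setminus\Lambda_{n_2}$. Conditioning on the configuration outside this buffer annulus, the outer event $A_\sigma(n_2,n_3)$ becomes measurable, and the inner event $A_\sigma(n_1,n_2)$ is an event for the FK-Ising measure on $\Lambda_{n_2}$ with \emph{some} induced boundary condition on $\partial\Lambda_{n_2}$. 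By Theorem~\ref{thm:main-thm} applied to a handful of well-chosen topological rectangles inside the buffer, one obtains uniform (in the external conditioning) comparability between this conditional probability and the unconditioned probability $\phi_{\mathbb Z^2}[A_\sigma(n_1,n_2)]$; this gives the desired factorization after taking expectation over the external configuration. For polychromatic $\sigma$ the FKG inequality cannot be used directly, but the uniform boundary-condition bound from Theorem~\ref{thm:main-thm} bypasses this issue.

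The lower bound is the genuinely hard direction and is where I expect the main difficulty to lie. I would prove the standard \emph{separation of arms} lemma: conditionally on $A_\sigma(n_1,n_2)$, with probability bounded below by an absolute constant depending only on $\sigma$, the $j$ arms can be chosen so that their endpoints on $\partial\Lambda_{n_2}$ lie in prescribed arcs of macroscopic length, separated by arcs of macroscopic length on which the complementary (dual or primal) state is realized up to some definite distance inside $\Lambda_{n_2}$. The mechanism is the following conditional pushing argument: starting from any realization of $A_\sigma(n_1,n_2)$, one exposes the arms one at a time from outside in, and at each step uses Theorem~\ref{thm:main-thm} in a topological rectangle cut out of the narrow shell $\Lambda_{n_2}\setminus\Lambda_{n_2/2}$ to force the arm's landing point to drift into a chosen target arc. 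The key point is that after exposing some arms, what remains is a rough topological rectangle with partially unknown boundary conditions, so only a version of RSW uniform in the boundary conditions and in the domain shape, namely Theorem~\ref{thm:main-thm}, suffices. Doing the same for the inner endpoints on $\partial\Lambda_{n_2}$ of $A_\sigma(n_2,n_3)$, one then \emph{glues} the two families of arms in a thin annulus around $\partial\Lambda_{n_2}$: in each of the $j$ target arcs one connects the inner and outer arms by a primal or dual crossing of a small topological rectangle (of the correct color $\sigma_k$), and in each separating arc one constructs a parallel crossing of the opposite color to guarantee disjointness and the right cyclic order; each such crossing has probability bounded below by Theorem~\ref{thm:main-thm}. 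Combining separation on both sides with the constant-probability gluing, and invoking the spatial Markov property to decouple the inside of $\Lambda_{n_2}$, the outside of $\Lambda_{2n_2}$, and the gluing shell between them, yields $\phi_{\mathbb Z^2}[A_\sigma(n_1,n_3)]\gtrsim \phi_{\mathbb Z^2}[A_\sigma(n_1,n_2)]\,\phi_{\mathbb Z^2}[A_\sigma(n_2,n_3)]$, with constants depending only on $\sigma$.
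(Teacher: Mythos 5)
Your proposal follows essentially the same route as the paper's proof: Kesten's quasi-multiplicativity framework, with mixing plus a buffer annulus for the upper bound, and separation-and-gluing of arms (via uniform RSW in rough topological rectangles) for the lower bound; the role you assign to Theorem~\ref{thm:main-thm} --- as the replacement for standard RSW/FKG arguments when conditioning produces rough domains with unknown induced boundary conditions --- is exactly the role it plays in the paper. One small point worth flagging: the buffer annulus in your upper bound produces a factor of the form $\phi_{\mathbb Z^2}[A_\sigma(2n_2,n_3)]$ rather than $\phi_{\mathbb Z^2}[A_\sigma(n_2,n_3)]$, and absorbing the factor of two in the inner radius requires the ``extensibility'' of arm events, which itself rests on the separation-and-gluing machinery (in the paper this is Lemma~\ref{extend}, Proposition~\ref{comparison_sep_non_sep} and the estimates~\eqref{extensibility}--\eqref{extensibility inter}); so in fact the separation lemma is needed for both directions of the theorem, not only the lower bound as your plan suggests.
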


Below we mention two classical corollaries of Theorem~\ref{quasi-multiplicativity}. Let $I=(I_k)_{1\leq k\leq j}$ be a collection of disjoint intervals on the boundary of
the square $Q=[-1,1]^2$, found in the counterclockwise order on $\partial Q$. For a sequence $\sigma$ of length $j$, let $A^I_{\sigma}(n,N)$ be the event that $A_\sigma(n,N)$ occurs and the arms $\gamma_k$, $1\leq k\leq j$, can be chosen so that each $\gamma_k$ ends on $NI_k$. 

\begin{cor}\label{loc}
  Fix a sequence $\sigma$ of length $j$. For each choice of $I=(I_k)_{1\leq k\leq j}$ and for all $n<N$ such that the event $A^I_{\sigma}(n,N)$ is non-empty, one has
  $$\phi_{\mathbb Z^2}\big[A^I_{\sigma}(n,N)\big]~\asymp~\phi_{\mathbb Z^2}\big[A_{\sigma}(n,N)\big],$$
  where the constants in $\asymp$ depend on $\sigma$ and $I$ only.
\end{cor}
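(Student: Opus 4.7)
The inclusion $A^I_\sigma(n,N)\subseteq A_\sigma(n,N)$ gives the upper bound for free, so the real content is the lower bound $\phi_{\mathbb Z^2}[A^I_\sigma(n,N)]\gtrsim \phi_{\mathbb Z^2}[A_\sigma(n,N)]$. The plan is to extend, at the cost of a multiplicative constant depending only on $\sigma$ and $I$, the inner $j$ arms realized by $A_\sigma(n,M)$ through the outer annulus $\Lambda_N\setminus\Lambda_M$ at a suitable intermediate scale $M$, so that the extensions land on the prescribed intervals $NI_k$ in the correct cyclic order.

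Fix $M:=N/4$. Theorem~\ref{quasi-multiplicativity} together with the uniform lower bound $\phi_{\mathbb Z^2}[A_\sigma(M,N)]\asymp 1$, obtained by realizing the $j$ disjoint arms in the bounded-aspect-ratio annulus $\Lambda_N\setminus\Lambda_M$ via crossings of $O(1)$ topological rectangles of bounded extremal length and invoking Theorem~\ref{thm:main-thm}, gives $\phi_{\mathbb Z^2}[A_\sigma(n,N)]\asymp \phi_{\mathbb Z^2}[A_\sigma(n,M)]$. Hence it is enough to exhibit an event $G^I$ depending only on the edges in $\Lambda_N\setminus\Lambda_M$ such that (i) $A_\sigma(n,M)\cap G^I\subseteq A^I_\sigma(n,N)$, and (ii) $\phi^\xi_{\Lambda_N\setminus\Lambda_M}[G^I]\geq c(I,\sigma)>0$ for every boundary condition $\xi$. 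The spatial Markov property then yields $\phi_{\mathbb Z^2}[A^I_\sigma(n,N)]\geq c(I,\sigma)\cdot \phi_{\mathbb Z^2}[A_\sigma(n,M)]\asymp \phi_{\mathbb Z^2}[A_\sigma(n,N)]$, closing the argument.

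To construct $G^I$, I would partition the annulus $\Lambda_N\setminus\Lambda_M$ into $2j$ cyclically ordered topological sub-regions: $j$ \emph{landing regions} $R_k$, each meeting the outer boundary in an interval containing $NI_k$ and the inner boundary $\partial\Lambda_M$ in a prescribed sub-arc $J_k$, and $j$ \emph{separator regions} $S_k$ lying between consecutive landing regions. Inside each $R_k$ I demand an arm of type $\sigma_k$ from $J_k$ to $NI_k$, and inside each $S_k$ an auxiliary path (or pair of paths) of the opposite type running from $\partial\Lambda_M$ to $\partial\Lambda_N$, whose role is to prevent the two neighbouring extensions from merging and to enforce the correct cyclic labelling. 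Each of these demands amounts to a crossing of a topological rectangle of uniformly bounded extremal length, so Theorem~\ref{thm:main-thm} supplies a uniform positive lower bound; $G^I$ is then assembled via FKG for increasing (resp.~decreasing) events of a single type together with the spatial Markov property to accommodate the coexistence of primal and dual paths.

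The main obstacle is the topological design ensuring the inclusion in (i): the arms produced by $A_\sigma(n,M)$ may arrive at $\partial\Lambda_M$ at completely arbitrary points, so the planted separator paths must create a genuine topological partition of the annulus that forces every inner arm of type $\sigma_k$ to exit through the corresponding $J_k$ into $R_k$, and not into a wrong landing region. This is precisely where the full strength of Theorem~\ref{thm:main-thm}, namely uniformity in both arbitrary topological rectangles and arbitrary boundary conditions, becomes indispensable: the rectangles $R_k$ and $S_k$ inherit the possibly rough discrete geometry of the inner boundary $\partial\Lambda_M$ and of the neighbouring planted separator paths, and the RSW-type estimates of \cite{DHN11} confined to standard rectangles would not suffice.
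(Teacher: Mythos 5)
Your plan is to reduce the claim to exhibiting a \emph{fixed} event $G^I$, measurable with respect to the annulus $\Lambda_N\setminus\Lambda_M$, satisfying $A_\sigma(n,M)\cap G^I\subseteq A^I_\sigma(n,N)$ and $\inf_\xi\phi^\xi_{\Lambda_N\setminus\Lambda_M}[G^I]>0$. No such $G^I$ exists, and this is a genuine gap rather than a technicality to be handled by Theorem~\ref{thm:main-thm}. The inner arms of $A_\sigma(n,M)$ terminate at $j$ \emph{random} boundary sites of $\partial\Lambda_M$, and no fixed pattern of planted arcs and separators in the outer annulus can guarantee that the inner arm of type $\sigma_k$ actually hooks onto the planted arc leading to $NI_k$: if that inner arm lands at a boundary site not met by any planted arc of its type, the inclusion (i) fails, while demanding that \emph{every} site of $\partial\Lambda_M$ belong to a planted arc of a prescribed type makes $\phi^\xi[G^I]$ degenerate rather than bounded below. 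Theorem~\ref{thm:main-thm} is a crossing estimate; it has no bearing on this combinatorial routing problem. You correctly identify the obstacle in your final paragraph but leave it unresolved.

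The missing ingredient is Proposition~\ref{comparison_sep_non_sep}, which the paper's one-line proof invokes and which you never use: one first passes from $A_\sigma(n,M)$ to $A^{\rm sep}_\sigma(n,M)$ at a bounded multiplicative cost. Well-separation provides exactly what your construction lacks. First, the $j$ arm endpoints on $\partial\Lambda_M$ are at mutual distance $\gtrsim\delta M$, so they can be enclosed in disjoint boxes. Second, each arm already extends a definite distance $\delta M$ beyond $\partial\Lambda_M$ inside its own box, so a planted continuation in that box genuinely connects to the arm. One then conditions on the endpoint positions and builds a routing event in $\Lambda_N\setminus\Lambda_M$ \emph{depending on those positions}, with a conditional lower bound uniform over all admissible well-separated configurations; this is where Theorem~\ref{thm:main-thm} and the spatial Markov property do enter, much as in the proof of Lemma~\ref{extend}. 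Your outline recovers the easy upper bound, the reduction to scale $M$ via quasi-multiplicativity, and the annulus-gluing idea, but it omits the well-separation step, without which the gluing cannot be carried out.
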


This leads to the computation of universal arm exponents describing the probabilities of the five-arm event in the full plane, and two- and three-arm events in the half-plane.
\begin{cor}[{\bf Universal exponents}]\label{universal exponent}
  For all $n<N$, the following is fulfilled:
  \begin{align*}
    \phi_{\mathbb Z^2}\big[A_{10110}(n,N)\big] &\asymp \left(n/N\right)^{2}, \quad
    \phi_{\mathbb Z^2}\big[A_{10}^{\mathrm{hp}}(n,N)\big]\asymp n/N, \quad
    \phi_{\mathbb Z^2}\big[A_{101}^{\mathrm{hp}}(n,N)\big]\asymp\left(n/N\right)^{2},
  \end{align*}
  where the event $A_{\sigma}^{\mathrm{hp}}(n,N)$ is the existence of $j$ disjoint $\sigma_i$-connected crossings in the half-annulus  $(\Lambda_N\setminus \Lambda_n)\cap (\mathbb{Z}\times\mathbb{Z}_+)$ and the constants in $\asymp$ are universal.
\end{cor}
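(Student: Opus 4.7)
The plan is to follow Kesten's classical strategy~\cite{Kes87} for ``universal'' arm exponents, originally devised for Bernoulli percolation, treating Theorem~\ref{thm:main-thm} (RSW uniform in boundary conditions) and Theorem~\ref{quasi-multiplicativity} as the only model-specific inputs. These three exponents are called \emph{universal} precisely because their derivation relies only on planar duality, the FKG inequality, RSW-type bounds, and quasi-multiplicativity, all of which are now available for the critical FK-Ising model.

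First I would use Theorem~\ref{quasi-multiplicativity} together with the bound $\phi_{\mathbb{Z}^2}[A_\sigma(n,2n)]\geq c(\sigma)>0$, a direct consequence of Theorem~\ref{thm:main-thm} and FKG obtained by constructing $|\sigma|$ disjoint primal/dual crossings of thin subrectangles of the dyadic annulus, to reduce each of the three statements to an estimate of the form $\phi_{\mathbb{Z}^2}[A_\sigma(1,N)]\asymp N^{-\alpha}$ with $\alpha\in\{1,2\}$. The lower bounds then follow by constructing positive-probability configurations realizing the prescribed arm pattern scale by scale: on each dyadic annulus, Theorem~\ref{thm:main-thm} combined with FKG forces the correct arm types at constant probability per scale, and these are multiplied together via quasi-multiplicativity, with the ``universal'' scaling $N^{-\alpha}$ being built into a single constant-aspect-ratio block handled by an explicit RSW construction.

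The substantive part of the proof is the upper bound, which I would handle by a first-moment counting argument. For $A_{10110}$: estimate the expected number $X$ of vertices $x\in\Lambda_{N/2}$ at which the five-arm alternating pattern extends from a unit neighborhood of $x$ out to distance $N$. By translation invariance and quasi-multiplicativity, $\mathbb{E}[X]\asymp N^2\cdot\phi_{\mathbb{Z}^2}[A_{10110}(1,N)]$. On the other hand, Kesten's topological argument shows $X=O(1)$ deterministically: an alternating $10110$ pattern around $x$ forces four distinct macroscopic clusters (two primal, two dual) to meet at $x$, and the number of such ``four-cluster junctions'' in a planar simply connected region is bounded by a universal constant. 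Combining these two inequalities yields $\phi_{\mathbb{Z}^2}[A_{10110}(1,N)]\lesssim N^{-2}$. The case $A_{101}^{\mathrm{hp}}$ is analogous in the half-plane: three-arm points on the boundary strip are triple pivotals for a horizontal crossing of a half-plane rectangle, and only $O(1)$ such points exist per crossing. For $A_{10}^{\mathrm{hp}}$, the relevant object is the \emph{lowest} open horizontal crossing of $[-N,N]\times[0,N]$ (which exists with positive probability by Theorem~\ref{thm:main-thm}): every vertex on it realizes a two-arm $\{1,0\}$ pattern, and the expected length of the lowest crossing is $O(N)$, yielding $N\cdot\phi_{\mathbb{Z}^2}[A_{10}^{\mathrm{hp}}(1,N)]\lesssim N$.

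The main obstacle will be the ``separation-of-arms'' step: to identify the arm event at $x$ under the generic boundary conditions induced on $x+\partial\Lambda_N$ by the surrounding configuration with the full-plane (or half-plane) arm event, one needs uniform control over these boundary conditions. For Bernoulli percolation this step is essentially free thanks to independence, while for FK-Ising it is precisely what Theorem~\ref{thm:main-thm} was designed to provide. Combined with the spatial Markov property of the FK-Ising measure, the classical combinatorial arguments of \cite{Kes87} then go through with only minor notational changes.
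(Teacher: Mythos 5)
Your plan follows Kesten's counting strategy, which is indeed what the paper does for the five-arm estimate: the reduction via Theorem~\ref{quasi-multiplicativity} to $\phi_{\mathbb Z^2}[A_{10110}(0,N)]\asymp N^{-2}$, an RSW construction of a five-arm point plus a union bound for the lower bound, and a first-moment counting for the upper bound. You also correctly identify Theorem~\ref{thm:main-thm} as what makes the surgery on rough conditioned domains possible, which is exactly the point the paper emphasizes.

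Two of your sub-steps, however, are misstated in a way that would matter when writing out the argument. The claim that the number $X$ of five-arm points in $\Lambda_{N/2}$ is $O(1)$ \emph{deterministically} is false for the unrestricted event $A_{10110}$: a single configuration can contain many such vertices. The counting bound only closes after the arms' landing intervals are prescribed, so that the resulting localized events $A_x$ become mutually disjoint (at most one vertex per configuration, as in the paper's Fig.~\ref{fig:five_arm}); this step rests on Corollary~\ref{loc}, which your proposal does not invoke, and without it the inequality $\sum_{x}\phi_{\mathbb Z^2}[A_x]\leq 1$ has no reason to hold. Similarly, the ``scale-by-scale'' description of the lower bound, where constant per-scale probabilities are accumulated via quasi-multiplicativity, only reproduces the a priori power-law bound of Proposition~\ref{a priori} and does not pin the exponent to $2$; the sharp lower bound comes from a single-scale argument, namely an RSW construction that with probability bounded below produces \emph{at least one} five-arm vertex in $\Lambda_{N}$, followed by a union bound over the $O(N^2)$ candidates. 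Finally, for $A^{\mathrm{hp}}_{10}$ your appeal to ``expected length of the lowest crossing is $O(N)$'' is not correct at criticality (the lowest crossing has fractal dimension larger than one); the intended argument is again a localized counting over the $O(N)$ vertices of the bottom boundary segment, for which again Corollary~\ref{loc} is the needed tool.
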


\begin{rem}
It is a standard consequence of the five arms exponent computation that $\phi_{\mathbb Z^2}\big[A_{101010}(n,N)\big] \les (n/N)^{2+\alpha}$ for some $\alpha>0$ and for all $n<N$. This bound is useful in the proof of a priori regularity estimates for discrete interfaces arising in the critical FK-Ising model and their convergence to $\mathrm{SLE}(16/3)$ curves, see~\cite{AB99,KS12,CDHKS13}.
\end{rem}

The last application presented in our paper deals with crossing probabilities in the spin-Ising model. For free boundary conditions, their conformal invariance was investigated numerically in \cite{LPSA}. For alternating ``$+1$/$-1$/$+1$/$-1$'' boundary conditions, an explicit formula for the scaling limit of crossing probabilities was predicted in~\cite{BBK05} and rigorously proved in~\cite{Izy11} using SLE techniques and a priori bounds presented below. 
For the spin model, one cannot hope to obtain estimates that are completely uniform with respect to the boundary conditions since
the probability of crossing of $+1$ spins with $-1$ boundary conditions tends to $0$ in the scaling limit (this can be seen using SLE techniques). Nevertheless, it is possible to get nontrivial bounds that are sufficient to deal with regularity of spin-Ising interfaces, notably in presence of free boundary conditions. 
\begin{cor}
\label{cor:spin-ising-crossing-bounds} For each $L>0$ there exists $\eta=\eta(L)>0$ such that the following holds:
for any topological rectangle $\left(\Omega,a,b,c,d\right)$ with
$\ell_{\Omega}\left[\left(ab\right),\left(cd\right)\right]\leq L$,
\[
\mathbb{P}\left[\mbox{there is a crossing of $-1$ spins connecting $\left(ab\right)$ and $\left(cd\right)$}\right]\geq \eta,
\]
where $\mathbb P$ denotes the critical spin-Ising model on $\left(\Omega,a,b,c,d\right)$ with free boundary conditions on $\left(ab\right)\cup\left(cd\right)$
and $+1$ boundary conditions on $\left(bc\right)\cup\left(da\right)$.
\end{cor}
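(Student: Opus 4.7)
The plan is to reduce the desired spin-Ising bound to an FK-Ising crossing event via the Edwards-Sokal coupling, and then to bound this FK event from below by combining planar duality with Theorem~\ref{strong RSW}~(ii).

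Let $\phi_{\Omega}^{\xi}$ denote the critical FK-Ising measure on $\Omega$ with wired boundary conditions on $(bc)\cup(da)$ and free boundary conditions on $(ab)\cup(cd)$. By Edwards-Sokal, the spin-Ising measure $\mathbb P$ from the statement is obtained by sampling $\omega\sim\phi_{\Omega}^{\xi}$, assigning spin $+1$ to the wired cluster, and assigning independent uniform $\pm 1$ spins to each of the remaining clusters. Let $F$ denote the event that some cluster of $\omega$ connects $(ab)$ to $(cd)$ without touching $(bc)\cup(da)$. On $F$, the probability that at least one such cluster receives spin $-1$ is at least $\tfrac12$, and any such cluster directly yields a $-1$-crossing between $(ab)$ and $(cd)$. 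Hence
\[
\mathbb P[\text{there is a $-1$-crossing from $(ab)$ to $(cd)$}]\ \geq\ \tfrac12\,\phi_{\Omega}^{\xi}[F],
\]
and the task reduces to showing that $\phi_{\Omega}^{\xi}[F]$ is bounded below by a positive constant depending only on $L$.

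To bound $\phi_\Omega^\xi[F]$ from below I would pass to the dual model. By planar topology, a primal FK cluster in $\Omega$ joining $(ab)$ to $(cd)$ while avoiding all vertices of $(bc)\cup(da)$ exists if and only if there is no dual-open path in $\Omega^*$ connecting the dual arcs corresponding to $(bc)$ and $(da)$ using only dual edges of $\Omega^*$. By the self-duality of critical FK-Ising (at $p^*_{\mathrm{crit}}=p_{\mathrm{crit}}$), the dual configuration is distributed as a critical FK-Ising measure $\phi_{\Omega^*}^{\xi^*}$ whose boundary conditions are obtained by swapping wired and free, namely wired on $(ab)\cup(cd)$ and free on $(bc)\cup(da)$. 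Allowing dual paths to also traverse the dual-wired super-vertex on $(ab)\cup(cd)$ only enlarges the event, so
\[
\phi_{\Omega}^{\xi}[F^c]\ \leq\ \phi_{\Omega^*}^{\xi^*}\big[(bc)\leftrightarrow(da)\big].
\]
The right-hand side is precisely the kind of crossing event addressed by Theorem~\ref{strong RSW}: in the topological rectangle $(\Omega^*,b,c,d,a)$ the arcs $(bc)$ and $(da)$ are the first pair of opposite arcs, and the theorem is uniform in the boundary conditions. The reciprocal relation for extremal lengths of the two pairs of opposite sides of a topological rectangle (the exact identity $\ell_\Omega[(ab),(cd)]\cdot\ell_\Omega[(bc),(da)]=1$ in the continuum, and its discrete counterpart up to universal multiplicative constants via \cite[Proposition~6.2]{Che12}), combined with the self-duality of $\mathbb Z^2$ which makes extremal lengths on $\Omega$ and $\Omega^*$ uniformly comparable, yields $\ell_{\Omega^*}[(bc),(da)]\geq c_0/L$ for some universal $c_0>0$. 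Theorem~\ref{strong RSW}~(ii), applied with $L$ replaced by $L/c_0$, then gives $\phi_{\Omega^*}^{\xi^*}[(bc)\leftrightarrow(da)]\leq 1-\eta(L/c_0)$, whence $\phi_{\Omega}^{\xi}[F]\geq\eta(L/c_0)$. Combining with the Edwards-Sokal step gives $\mathbb P[\text{$-1$-crossing}]\geq \tfrac12\,\eta(L/c_0)$, as required.

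The main step that requires care is the duality translation: one must set up the planar dual of $F$ in the presence of mixed wired/free boundary data, matching primal wired arcs with dual free arcs and vice versa, and observe that the natural topological dual of $F^c$ is a sub-event of the standard dual crossing event $\{(bc)\leftrightarrow(da)\}$ under $\phi_{\Omega^*}^{\xi^*}$, so that passing to the latter preserves the inequality direction we need. Once this bookkeeping is secured, the conclusion follows from a single application of Theorem~\ref{strong RSW}~(ii).
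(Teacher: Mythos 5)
Your Edwards--Sokal reduction --- proving a lower bound for $\phi^\xi_\Omega[F]$, where $F$ is the event that some cluster connects $(ab)$ to $(cd)$ while avoiding the wired arcs, and then paying $\tfrac12$ for the coin flip --- matches the paper's first step. The gap is in the duality translation. You need the implication ``$F^c$ implies a dual-open crossing from $(bc)$ to $(da)$ in $\Omega^*$,'' but that is false. Planar duality of quadrilateral crossings says: there is no primal open path from $(ab)$ to $(cd)$ \emph{at all} if and only if there is a dual-open crossing separating $(bc)$ from $(da)$. Your $F$ is strictly stronger than ``there is a primal crossing'': it requires the crossing cluster to avoid $(bc)\cup(da)$. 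Hence $F^c$ contains every configuration in which an open $(ab)$--$(cd)$ path exists but only inside the wired boundary cluster; on those configurations there is \emph{no} dual crossing. So the inequality $\phi^\xi_\Omega[F^c]\leq\phi^{\xi^*}_{\Omega^*}\big[(bc)\leftrightarrow(da)\big]$ fails, and the omitted contribution $\phi^\xi_\Omega\big[\{\text{an $(ab)$--$(cd)$ crossing exists, but only via the wired cluster}\}\big]$ is not a priori small --- it is exactly the term that needs to be controlled.

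The paper controls it with an explicit construction instead of a one-shot duality reduction. After a finite-energy step that reduces to the case where $(bc)$ and $(da)$ are macroscopically separated, $\Omega$ is split into three subdomains $\Omega_1,\Omega_2,\Omega_3$ with uniformly bounded extremal lengths (in both directions), and Theorem~\ref{strong RSW} is applied three times: once to force an FK-open crossing of the middle piece $\Omega_2$, and once in each of $\Omega_1,\Omega_3$ to forbid open paths from the cuts to the wired arcs $(bc)$, $(da)$. FKG and monotonicity in boundary conditions combine the three bounds into $\phi^\xi_\Omega[F]\geq\alpha^3$. The two ``no-crossing'' events in $\Omega_1$ and $\Omega_3$ are exactly the certification, missing in your argument, that the middle crossing cluster is disjoint from the wired arcs.
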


By monotonicity of the spin-Ising model with respect to the boundary conditions (this is an easy consequence of the FKG inequality), Corollary~\ref{cor:spin-ising-crossing-bounds} remains fulfilled for

\begin{itemize}

\item free boundary conditions everywhere on the boundary of $\Omega$;

\item $-1$ boundary conditions on $\left(ab\right)\cup\left(cd\right)$ and $+1$ ones on $\left(bc\right)\cup\left(da\right)$.

\end{itemize}

\begin{remark} Both setups are symmetric with respect to the global spin-flip $+1$/$-1$. For topological reasons, there cannot be simultaneously a~$-1$ crossing from $\left(ab\right)$ to $\left(cd\right)$ and a~$+1$ crossing from $\left(bc\right)$ to $\left(da\right)$ even if we admit two consecutive spins to share a face instead of an edge for one of these crossings.
Due to the uniform estimate $\ell_{\Omega}\left[\left(ab\right),\left(cd\right)\right]\cdot \ell_{\Omega}\left[\left(bc\right),\left(da\right)\right]\asymp 1$ (see Section~\ref{sub:discrete-extremal-length}),
{such crossing probabilities in the critical spin-Ising model are also uniformly bounded from above if $\ell_{\Omega}\left[\left(ab\right),\left(cd\right)\right]\geq L^{-1}$}.\end{remark}

\subsection*{Acknowledgements}
The authors would like to thank P.~Nolin for many interesting discussions. The authors are also grateful to S.~Smirnov for introducing them to the subject and sharing many ideas.
D.~C. was partly supported by the Chebyshev Laboratory at Saint Petersburg State University under the Russian Federation Government grant 11.G34.31.0026 and JSC~``Gazprom Neft''.
H.~D.-C. was partly supported by the Swiss NSF and ERC AG CONFRA. C.~H. was partly supported by the National Science Foundation under grant DMS-1106588 and the Minerva Foundation.

\section{\label{sec:graph-fk-notation}FK-Ising model on discrete domains}
\setcounter{equation}{0}

\subsection{Discrete domains\label{sub:graph}}

Most of the time, a finite planar graph $G\subset\mathbb{Z}^2$  will be identified with the set of
its vertices. We will also denote by $\mathcal{E}\left(G\right)$
the set of its edges. For two vertices
$x,y\in \mathbb{Z}^2$, we write $x\sim y$ if they are adjacent and we denote
by $xy\in\mathcal{E}\left(\mathbb{Z}^2\right)$ the edge between them. In this paper, we always assume that $G$ is connected and simply connected meaning that all edges surrounded by a cycle from $\mathcal{E}(G)$ also belong to $\mathcal{E}(G)$. We call such graphs \emph{discrete domains}. For a discrete domain $\Omega$, introduce the vertex boundary of $\Omega$:
\begin{align*}
\partial\Omega&:=\{x\in \Omega:\exists y\in \mathbb{Z}^2:x\sim y~\text{and}~xy\not\in\mathcal{E}(\Omega)\}.
\end{align*}
As $\Omega$ is simply connected, there exists a natural cyclic order on $\partial\Omega$.
For $x,y\in\partial\Omega$, we denote by $\left(xy\right)\subset\partial\Omega$
the counterclockwise arc of $\partial\Omega$ from $x$ to $y$ including $x$ and $y$.
We will also frequently identify $x\in\partial\Omega$ with the arc $\left(xx\right)$.
We call a discrete domain $\Omega$ with four marked vertices $a,b,c,d\in\partial\Omega$ listed counterclockwise a \emph{topological rectangle.}

\begin{figure}
\begin{center} \includegraphics[width=0.70\textwidth]{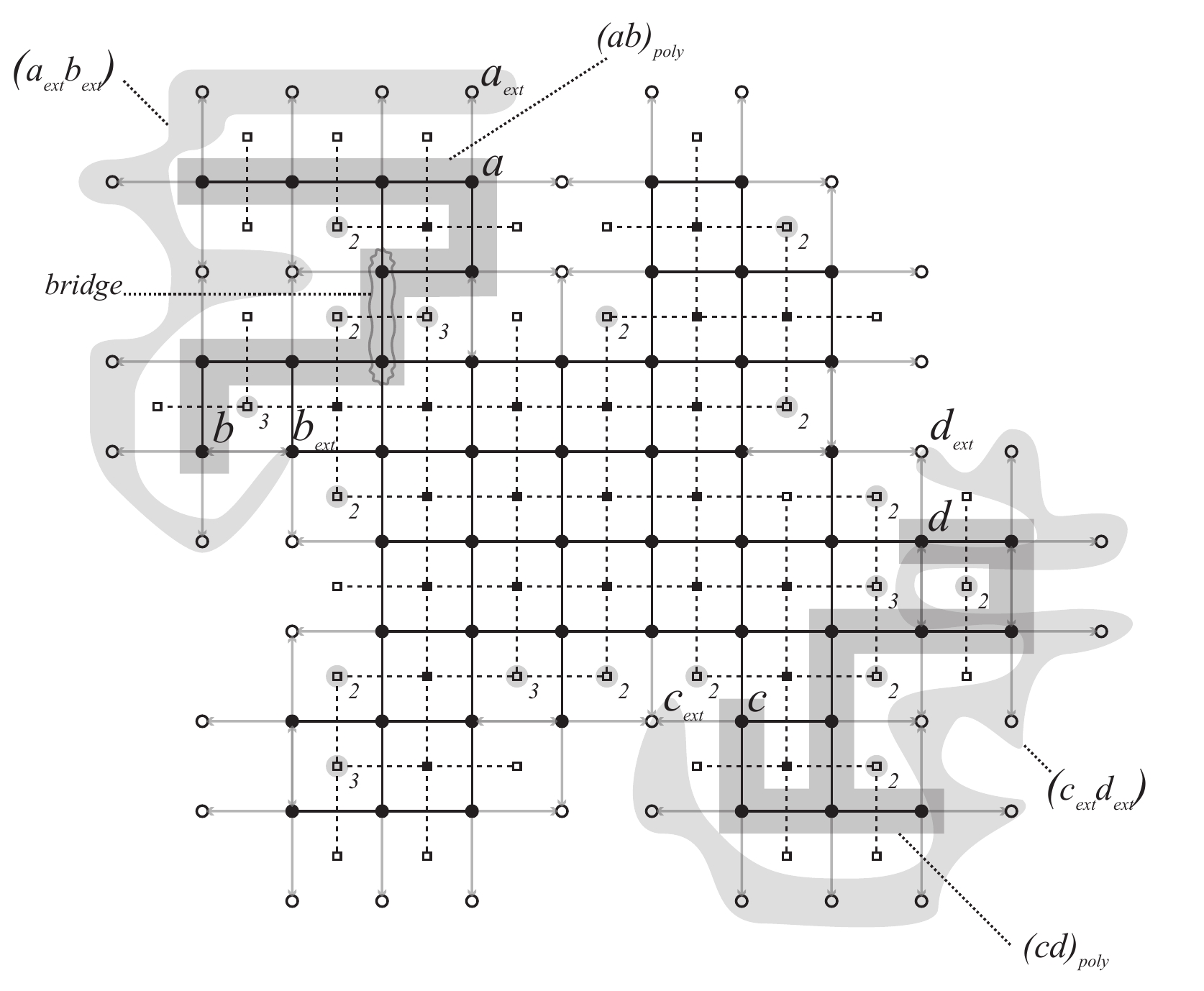}\end{center}
\caption{\label{fig:graph} An example of a discrete domain $\Omega$ (black discs). The black solid edges are elements of $\mathcal{E}(\Omega)$, the gray (oriented) edges are elements of $\mathcal{E}_\mathrm{ext}(\Omega)$. The vertices of the dual domain $\Omega^*_\mathrm{int}$ are shown as black squares. The external vertices of $\Omega$ and $\Omega^*_\mathrm{int}$ (counted with multiplicities) are shown in white. If $\Omega$ contains bridges (i.e., edges that cannot be deleted keeping $\Omega$ connected), then $\Omega^*$ is not connected.
For $a,b,c,d\in\partial\Omega$, the corresponding external boundary arcs $(a_\mathrm{ext}b_\mathrm{ext}),(c_\mathrm{ext}d_\mathrm{ext})\subset\partial_\mathrm{ext}\Omega$ are shown in gray. Also, the ``internal polyline realizations'' of boundary arcs $(ab),(cd)\subset\partial\Omega$ which are used in the proof of Proposition~\ref{prop:bound-cp-wfwf} are highlighted. Note that $(ab)_\mathrm{poly}$ and $(cd)_\mathrm{poly}$ contain inner vertices of $\Omega$.}
\end{figure}

\subsection{FK percolation models\label{sec:FK-Ising-model}\label{sec:input}}
In order to remain as self-contained as possible, some basic features
of the FK percolation (or random-cluster) models are presented now. The reader can consult
the reference book \cite{G_book_FK} for additional details.

The \emph{FK percolation} measure on a discrete domain $\Omega$ is defined as follows. A \emph{configuration} $\omega\subset\mathcal{E}(\Omega)$ is a random subgraph of $\Omega$.
An edge is called \emph{open} if it belongs to $\omega$, and \emph{closed} otherwise. Two vertices $x,y\in \Omega$ are said to be \emph{connected} if there is an \emph{open path} (a path composed
of open edges only) connecting them. Similarly, two sets of vertices $X$ and $Y$ are said to be connected if there exist two vertices $x\in X$
and $y\in Y$ that are connected; we use the notation $X\leftrightarrow Y$ for this event. We also write $x\leftrightarrow Y$ for $\{x\}\leftrightarrow Y$.
Maximal connected components of the configuration are called \emph{clusters}.

A set of \emph{boundary conditions} $\xi=(E_{1},E_{2},\ldots)$ is a
partition of $\partial \Omega$ into disjoint subsets $E_{1},E_{2},\ldots\subset\partial \Omega$.
For conciseness, singletons subsets are omitted from the notation. We say that two
boundary vertices $x,y\in\partial \Omega$ are \emph{wired} if they belong
to the same element of~$\xi$; we call boundary vertices that are
not wired to other vertices \emph{free}.

We denote by $\omega\cup\xi$ the graph obtained from the configuration
$\omega$ by artificially linking together all pairs of vertices
$x,y\in\partial\Omega$ that are wired by $\xi$. Let $o(\omega)$ and
$c(\omega)$ denote the number of open and closed edges of $\omega$, respectively, and $k(\omega,\xi)$ be the number of connected components
of $\omega\cup\xi$. The probability measure $\phi_{p,q,\Omega}^{\xi}$
of the random-cluster model on $\Omega$ with parameters $p$ and $q$
and boundary conditions $\xi$ is defined by
\[
\phi_{p,q,\Omega}^{\xi}(\left\{ \omega\right\} ):=\frac{p^{o(\omega)}(1-p)^{c(\omega)}q^{k(\omega,\xi)}}{Z_{p,q,\Omega}^{\xi}}
\]
for every configuration $\omega$ on $\Omega$, where $Z_{p,q,G}^{\xi}$ is a normalizing constant (it is also called partition function of the model). In the following, $\phi_{p,q,\Omega}^{\xi}$ also denotes the expectation with respect to the measure.

\begin{remark}If an edge $e$ connects two boundary vertices wired by $\xi$, then the event $e\in\omega$ is independent of the rest of $\omega$ since the number of clusters $k(\omega,\xi)$ does not depend on the state of~$e$. Similarly, if $e\in\mathcal{E}(\Omega)$ is a bridge (i.e. an edge disconnecting the graph into two connected components) splitting $\Omega$ into two discrete domains $\Omega_1$ and $\Omega_2$ and if boundary conditions $\xi$ do not mix $\partial\Omega_1$ and~$\partial\Omega_2$, then $\omega\cap\mathcal{E}(\Omega_1)$, $\omega\cap\mathcal{E}(\Omega_2)$ and the state of $e$ are mutually independent.
\end{remark}

\subsection{Domain Markov property}
The \emph{domain Markov property} enables one to encode the dependence between different areas of the space through boundary conditions. Namely, for each boundary conditions $\xi$ and a configuration $\varpi\subset \mathcal{E}(\Omega)\setminus\mathcal{E}(\Omega')$ outside~$\Omega'\subset\Omega$, $\phi_{p,q,\Omega}^{\xi}$~conditioned to match $\varpi$ on $\mathcal{E}(\Omega)\setminus\mathcal{E}(\Omega')$ is equal to $\phi_{p,q,\Omega'}^{\varpi\cup\xi}$, where $\varpi\cup\xi$
is the set of connections inherited from $\varpi$: one wires all vertices of $\partial\Omega'$ that are connected by $\varpi\cup\xi$. Thus, the influence of the configuration outside $\Omega'$ and boundary conditions on $\partial\Omega$ is completely contained in the new boundary conditions on $\partial\Omega'$.

\subsection{FKG inequality and monotonicity with respect to boundary conditions}
The random-cluster model on a finite graph with parameters $p\in[0,1]$ and $q\geq1$ has the \emph{strong positive association property}, a fact which has two important consequences. The first is the well-known \emph{FKG inequality}:
\[
\phi_{p,q,\Omega}^{\xi}(A_1\cap A_2)\geq\phi_{p,q,\Omega}^{\xi}(A_1)\:\phi_{p,q,\Omega}^{\xi}(A_2)
\]
for all pairs $A_1,A_2$ of \emph{increasing events} ($A$ is an increasing event if $\omega\in A$ and $\omega\subset\omega'$ implies $\omega'\in A$) and arbitrary boundary conditions
$\xi$. 

The second consequence of the strong positive association is the following
monotonicity with respect to boundary conditions, which is particularly useful
when combined with the domain Markov property. For any pair of boundary conditions
$\xi\leq\xi'$ (which means that all vertices wired in $\xi$ are wired in $\xi'$ too) and for any increasing event $A$,
we have
\[
\phi_{p,q,\Omega}^{\xi}(A)\leq\phi_{p,q,\Omega}^{\xi'}(A).
\]
Among all possible boundary conditions, the following four play a specific role in our paper:
\begin{itemize}
\item the \emph{free} boundary conditions $\xi=\emptyset$ corresponds to the case when there are no wirings between boundary vertices;
\item the \emph{wired} boundary conditions $\xi=\partial\Omega$ corresponds to the case when all boundary vertices are pairwise connected;
\item for a discrete domain $\Omega$ with two marked boundary points $a,b\in\partial\Omega$, the boundary conditions $\xi=(ab)$ are called \emph{Dobrushin} ones (in other words, all vertices
on the boundary arc $(ab)$ are wired together, and all other boundary vertices are free);
\item for a topological rectangle $\left(\Omega,a,b,c,d\right)$, the boundary conditions $\xi=((ab),(cd))$ are called \emph{alternating} (or \emph{free/wired/free/wired}) ones.
\end{itemize}
\begin{rem}
\label{rem:monotonicity} The free and wired boundary conditions are
extremal for stochastic domination: for all boundary conditions $\xi$ and any increasing event $A$,
$
\phi_{p,q,\Omega}^{\emptyset}(A)\leq\phi_{p,q,\Omega}^{\xi}(A)\leq\phi_{p,q,\Omega}^{\partial\Omega}(A)
$. 
Hence to get a lower (respectively an upper) bound on crossing
probabilities that is uniform with respect to $\xi$,
it is enough to get such a bound for $\xi=\emptyset$ (respectively $\xi=\partial\Omega$).
\end{rem}

\subsection{Planar self-duality and dual domains\label{sub:planar-duality}}
We denote by $\left(\mathbb{Z}^{2}\right)^{*}$ the dual lattice to the original (primal) square lattice $\mathbb{Z}^2$:
vertices of $\left(\mathbb{Z}^{2}\right)^{*}$ are the centers of the faces of $\mathbb{Z}^{2}$, and edges of $\left(\mathbb{Z}^{2}\right)^{*}$ connect nearest neighbors together.

The FK-Ising model is self-dual if $p=p_\mathrm{crit}(q)=\sqrt{q}/(\sqrt{q}+1)$, see also \cite{BDC12} where it is proved that $p_\mathrm{crit}(q)$ is indeed the critical (and not only self-dual) value of the FK percolation for all $q\geq 1$. This self-duality can be described as follows:
given a discrete domain $\Omega\subset\mathbb{Z}^2$, one can couple two critical FK-Ising models defined on~$\Omega$ and on an appropriately chosen \emph{dual domain} $\Omega^{*}\subset (\mathbb{Z}^2)^*$ in such a way that, whenever an edge $e\in\mathcal{E}\left(\Omega\right)$ is open, the dual edge
$e^{*}\in\mathcal{E}\left(\Omega^{*}\right)$ is closed, and vice versa. In this coupling, one should be careful with boundary conditions of the models: informally speaking, they also should be chosen dual to each other.

Let us provide a few more details regarding the dual domain $\Omega^*$ and the duality between boundary conditions. Given a discrete domain $\Omega$, construct $\Omega^*$ as follows. Let $\mathcal{E}(\Omega^*)$ be the set of dual edges of $\left(\mathbb{Z}^{2}\right)^{*}$ corresponding to the edges of $\mathcal{E}(\Omega)$. The set of vertices of $\Omega^*$ is defined to be the set of endpoints of $\mathcal{E}(\Omega^*)$ counted with multiplicities if exactly two opposite edges incident to a dual vertex belong to $\mathcal{E}(\Omega^*)$, see Fig.~\ref{fig:graph}. Then, one can couple the critical FK-Ising model on $\Omega$ with \emph{wired} boundary conditions and the critical FK-Ising model on $\Omega^*$ with \emph{free} boundary conditions so that each primal edge is open if and only if its dual is closed. In general, it can happen that the graph $\Omega^*$ is not connected, then the critical FK-Ising model on $\Omega^*$ should be understood as the collection of mutually independent models on connected components of $\Omega^*$.

Below we also use the following notation: we call $f$ an \emph{interior} vertex of $\Omega^*$ if $f$ is the center of a face of $\Omega$. We denote by $\Omega^*_{\mathrm{int}}$ the (not necessarily connected) subgraph of $\Omega^*$ formed by all interior vertices and edges between them. It is worth noting that \emph{$\Omega^*_\mathrm{int}$ is connected if $\Omega$ ``is made of square tiles'', i.e., does not contain bridges}.

\section{\label{sec:discrete complex analysis}Discrete complex analysis}
\setcounter{equation}{0}

In this section, we introduce the discrete harmonic measures and random walk partition functions that will be used in this article. A number of their properties are provided, including factorization properties and
uniform comparability results obtained in \cite{Che12}.

In order to properly define the following notions, we will need to introduce a natural extension of the domain $\Omega$. Let
$$\mathcal E_\mathrm{ext}(\Omega):=\{\overrightarrow{xy}:x\in \partial\Omega,~y\in\mathbb{Z}^2,~x\sim y~\mathrm{and}~xy\notin \mathcal{E}(\Omega)\}.$$
We will sometimes see $\mathcal E_\mathrm{ext}(\Omega)$ as a set of vertices $\partial_{\mathrm{ext}}\Omega$ by identifying oriented edges $\overrightarrow{xy}$ with their endpoints. We treat $\partial_{\mathrm{ext}}\Omega$ as a set of abstract vertices, meaning that even if some $y\in\mathbb Z^2$ is the endpoint of two (or three) oriented edges $\overrightarrow{x_1y}$ and $\overrightarrow{x_2y}$ from $\mathcal{E}_\mathrm{ext}(\Omega)$ for $x_1\ne x_2$, it is considered as two (or three) \emph{distinct} elements of $\partial_\mathrm{ext}\Omega$. Then we can also see $\mathcal E_\mathrm{ext}(\Omega)$ as a set of unoriented edges of the form $xy$, with $x\in \Omega$ and $y\in\partial_\mathrm{ext}\Omega$, see Fig.~\ref{fig:graph}.

\begin{definition}Define $\overline \Omega$ to be the graph with vertex set given by $\Omega\cup\partial_{\mathrm{ext}}\Omega$ and edge set $\mathcal{E}(\overline{\Omega})$ given by $\mathcal E(\Omega)\cup \mathcal E_\mathrm{ext}(\Omega)$.
\end{definition}

As before, since $\Omega$ is a discrete domain, there exists a natural cyclic order on $\partial_{\mathrm{ext}}\Omega$. For $x$ and $y$ in $\partial_{\mathrm{ext}}\Omega$, we introduce the counterclockwise arc $(xy)$ between the two vertices.

\smallskip

\begin{center}
\noindent {\em We highlight that, for $x,y\in\partial_\mathrm{ext}\Omega$, the arc $(xy)$ is a part of $\partial_\mathrm{ext}\Omega=\partial\overline{\Omega}$ and not $\partial\Omega$.}
\end{center}

\subsection{Random walks and discrete harmonic measures\label{sub:random-walks}}

Let $\Omega\subset\mathbb{Z}^2$ be a discrete domain (see Section~\ref{sub:graph} for a definition), we consider a collection of positive conductances $\mathrm{w}_e$ defined on the set $\mathcal{E}(\overline\Omega)$. \emph{In this paper we always assume that }
$$\mathrm{w}_e:=\begin{cases}1&\text{ if }e\in \mathcal{E}(\Omega),\\
2(\sqrt{2}\!-\!1)&\text{ if }e\in\mathcal{E}_\mathrm{ext}(\Omega).\end{cases}$$
This particular choice of boundary conductances will be important in Section~\ref{sub:from-FK-to-RW}. For a function $f:\overline\Omega\to\mathbb{R}$, we define the Laplacian $\Delta_\Omega f$ by
\[
[\Delta_\Omega f](x) := \mathrm{m}_x^{-1}\sum_{y\sim x}\mathrm{w}_{xy}(f(y)-f(x)),\]
where
\[\mathrm{m}_x:=\begin{cases}\sum_{y\sim x}\mathrm{w}_{xy}&x\in\Omega,\\
2\sqrt 2\!+\!1&x\in \partial_\mathrm{ext}\Omega.\end{cases}
\]
The notation $\mathrm{m}_x=2(\sqrt{2}\!-\!1)+3$ for $x\in \partial_\mathrm{ext}\Omega$ is introduced to fit definitions in~\cite{Che12}.

\begin{remark}\label{rem:dual-conductances}
In Section~\ref{sec:proof} we will also need to work with a dual domain $\Omega^*_\mathrm{int}$ and its extension $\overline{\Omega^*_\mathrm{int}}$ provided that $\Omega$ does not contain bridges. In this case, the only distinction between $\Omega^*$ and~$\overline{\Omega^*_\mathrm{int}}$ is that the boundary vertices of~$\Omega^*$ are ``counted with multiplicities'' in~$\overline{\Omega^*_\mathrm{int}}$. On the dual lattice, we set
$\mathrm{w}_e:=1$ for every $e\in\mathcal{E}(\Omega^*)$. All estimates from~\cite{Che12} mentioned below are uniform with respect to the choice of edge conductances as soon as there exists an absolute constant $\nu_0\ge 1$ such that $\mathrm{w}_e\in[\nu_0^{-1},\nu_0]$ for all edges.
\end{remark}

For $x,y\in\overline\Omega$, let $S_\Omega(x,y)$ denote the set of nearest-neighbor paths $x=\gamma_0\sim\gamma_1\sim\ldots\sim\gamma_{n}=y$ such that $\gamma_k\in\Omega$ for all $k=1,\ldots,n\!-\!1$, where $n=n(\gamma)$ is the length of $\gamma$. This set corresponds to the possible realizations of random walks (RW) from $x$ to $y$ staying in $\Omega$ (the first and/or last vertices can possibly be on $\partial_{\rm ext}\Omega$, in this case $\gamma_0\sim\gamma_1$, $\gamma_{n-1}\sim\gamma_n$ should be understood as $\overrightarrow{\gamma_1\gamma_0},\overrightarrow{\gamma_{n-1}\gamma_n}\in\mathcal{E}_\mathrm{ext}(\Omega)$, respectively). Let $\ZO\left[x,y\right]$ be the RW partition function defined by
\[
\ZO\left[x,y\right]~:= \sum_{\gamma\in S_\Omega(x,y)} \mathrm{m}_{y}^{-1}\!\prod_{k=0}^{n(\gamma)-1}\frac{\mathrm{w}_{\gamma_{k}\gamma_{k+1}}}{\mathrm{m}_{\gamma_k}}\,.
\]
For $X,Y\subset\overline\Omega$, define $\displaystyle
\ZO\left[X,Y\right]~:=\sum_{x\in X,\,y\in Y}\ZO[x,y].$ Also, $Z[x,Y]$ means $Z[\{x\},Y]$.

\begin{remark}\label{rem:hitting-proba}Let $x\in\Omega$ and $E\subset\partial_\mathrm{ext}\Omega$ be a boundary arc. We find that
\begin{equation}\label{eq:Z=RWhittingP}\begin{array}{rl}
\ZO[x,E]=(2\sqrt{2}\!+\!1)^{-1}\cdot \mathbb{P}\,[\!\!\!&\mathrm{RW}~\mathit{with~generator}~\Delta_\Omega \\ & \mathit{starting~from}~x ~
 \mathit{hits}~E~\mathit{before}~\partial_{\mathrm{ext}}\Omega\setminus E\,]\,.
\end{array}\end{equation}
In other words, up to the multiplicative constant, $\ZO[\,\cdot\,,E]$ is the \emph{discrete harmonic measure} of the set $E$ viewed from $x\in\Omega$. At the same time, it has nonzero boundary values on~$\partial_\mathrm{ext}\Omega$ since
\begin{equation}\label{ZXext=ZX}
\ZO[x_\mathrm{ext},E]=\frac{\mathrm{m}_{x_\mathrm{ext}}\mathrm{w}_{xx_\mathrm{ext}}}{\mathrm{m}_x}\cdot  \ZO[x,E]\ \ \mbox{, where}\ \overrightarrow{xx_\mathrm{ext}}\in\mathcal{E}_\mathrm{ext}(\Omega).
\end{equation}
This definition is useful in order to have a symmetric notation for $\ZO[x,y]=\ZO[y,x]$. Up to a multiplicative constant, $\ZO[x,E]$ does not depend on the conductances of the edges $yy_\mathrm{ext}$, $y_\mathrm{ext}\in E$. At the same time, varying conductances of other external edges one can change $\ZO[x,E]$ drastically, e.g., if $x$ and $E$ are connected in $\Omega$ through a long thin passage.
\end{remark}
\smallskip 

In our paper we use some factorization properties of the RW partition function $\ZO$. While in the continuum results of this kind are almost trivial (for instance, one can use conformal invariance and explicit expressions in a reference domain), it requires a rather delicate analysis to obtain uniform versions of them staying on the discrete level.

\begin{thm}[{\bf \cite[Theorem~3.5]{Che12}}]
\label{thm:factorization-triangle} Let $\Omega$ be a discrete domain with three vertices $a$, $c$, $d$ in $\partial_\mathrm{ext}\Omega$ listed counterclockwise. Then
\begin{equation}\label{eq:Zabc-factorization}
\ZO\left[a,\left(cd\right)\right]\asymp\sqrt{\frac{\ZO\left[a,c\right]\ZO\left[a,d\right]}{\ZO\left[c,d\right]}},
\end{equation}
where constants in $\asymp$ are independent of the domain.
\end{thm}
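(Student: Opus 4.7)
\smallskip

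\noindent\textbf{Proof proposal.} The formula is motivated by its continuum analogue. Uniformising $\Omega$ onto the upper half plane so that $a,c,d$ correspond to reals $x_a<x_c<x_d$, the properly normalised boundary-to-boundary partition function scales like $|x-y|^{-2}$ (reflecting the normal derivative of the Poisson kernel), and then a direct computation gives
\begin{equation*}
\ZO[a,(cd)]\;\asymp\;\int_{x_c}^{x_d}\!(x-x_a)^{-2}\,dx\;=\;\frac{x_d-x_c}{(x_c-x_a)(x_d-x_a)},
\end{equation*}
which matches $\sqrt{\ZO[a,c]\ZO[a,d]/\ZO[c,d]}$ after substituting the scaling. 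The content of Theorem~\ref{thm:factorization-triangle} is therefore not the formula itself, but its validity on the discrete level uniformly in the local geometry of $\partial\Omega$.

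My plan is to prove the two directions separately by reducing to the situation where the ``apex'' is far from $\{c,d\}$. Fix a cut-set $\Gamma\subset\Omega$ separating $\{c,d\}$ from $\{a\}$, chosen so that the extremal distance from $\{a\}$ to $\Gamma$ equals half of the extremal distance from $\{a\}$ to $\{c,d\}$. Using the Markov property of the random walk on $\Gamma$, one can represent each of $\ZO[a,(cd)]$, $\ZO[a,c]$, $\ZO[a,d]$ as a sum over $v\in\Gamma$ of products of a common factor $\mathrm{m}_v^{-1}\ZO[a,v]$ and the respective $\ZO[v,\cdot]$'s. Applying Cauchy--Schwarz to these sums bootstraps the upper bound in \eqref{eq:Zabc-factorization} from a \emph{localised} version of the same claim with $a$ replaced by $v\in\Gamma$; for such $v$ the triangle $(v,c,d)$ is ``small'' on the extremal length scale of $\{c,d\}$, and the localised identity can be verified either by an explicit random-walk computation in a neighbourhood of $\{c,d\}$ or by iterating the reduction a bounded number of times. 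The lower bound is obtained dually: one produces enough walks from $a$ to $(cd)$ by splicing walks $a\to v$ with walks $v\to c$ and $v\to d$ at vertices $v\in\Gamma$ where $\ZO[a,v]$ is not too small, the overcounting of each resulting walk being controlled by $\ZO[c,d]$, which yields the square root.

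The main obstacle is uniformity. Boundary-to-boundary partition functions are extremely sensitive to the local shape of $\partial\Omega$ near $a$, $c$, $d$ (thin fjords, pinches, bridges), so extracting absolute constants independent of this local geometry requires the full strength of the discrete Beurling-type estimates and the uniform comparability of discrete and continuous extremal length developed in~\cite{Che12}. The specific external conductance $2(\sqrt{2}\!-\!1)$ is what makes formulas~\eqref{eq:Z=RWhittingP} and~\eqref{ZXext=ZX} hold with the proper normalisation, so that $\ZO[\,\cdot\,,\,\cdot\,]$ genuinely behaves like a normal derivative of the discrete Poisson kernel at the boundary, a prerequisite for the inverse-square scaling that underlies the identity.
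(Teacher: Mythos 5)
This theorem is quoted from \cite[Theorem 3.5]{Che12} and is not proved in the present paper, so there is no in-paper argument to compare against; I will assess your proposal on its own.

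Your continuum heuristic and the Cauchy--Schwarz propagation step for the upper bound are both sound: decomposing $\ZO[a,X]=\sum_{v}\mathrm{m}_v\,\mathrm{Z}_{\Omega_a}[a,v]\,\ZO[v,X]$ at the first passage across $\Gamma$ and applying $\sum_v p_v\sqrt{x_vy_v}\leq\bigl(\sum_vp_vx_v\bigr)^{1/2}\bigl(\sum_vp_vy_v\bigr)^{1/2}$ does carry the inequality from all $v\in\Gamma$ to $a$ with no loss of constant. The genuine gap is that this recursion has no base case. Each iteration moves the apex to a vertex at roughly half the previous extremal distance from $\{c,d\}$; this never terminates, and certainly not ``after a bounded number of steps.'' More to the point, moving the apex closer to $\{c,d\}$ does not simplify anything: the difficulty of the theorem, as you yourself note, is the arbitrary local geometry of $\partial\Omega$ \emph{near} $c$ and $d$ (fjords, pinches, bridges), and that geometry is unchanged no matter where the apex sits. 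So ``an explicit random-walk computation in a neighbourhood of $\{c,d\}$'' is not available --- there is no fixed neighbourhood in which the boundary is standard. The lower-bound sketch (``splicing walks \ldots\ the overcounting being controlled by $\ZO[c,d]$'') is substantially vaguer; a reverse Cauchy--Schwarz is not available, and the proposal does not explain why the overcounting factor would be $\ZO[c,d]$ rather than something geometry-dependent. What you have is the easy ``propagation'' step; the actual content of the cited theorem is precisely the uniform estimate near $\{c,d\}$ that the proposal defers to, and that is what the separator/monotonicity machinery in~\cite{Che12} supplies.

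One further correction: the closing remark about the external conductance $2(\sqrt2-1)$ being ``a prerequisite for the inverse-square scaling'' is misleading. As the paper notes in Remark~\ref{rem:dual-conductances}, the estimates of \cite{Che12} (including this factorization) hold uniformly for any conductances $\mathrm{w}_e\in[\nu_0^{-1},\nu_0]$; the value $2(\sqrt2-1)$ is chosen only to match Smirnov's observable in Section~\ref{sub:from-FK-to-RW}, not because it is needed for the random-walk statement.
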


Theorem~\ref{thm:factorization-triangle} and the monotonicity of the ratio $\ZO[\,\cdot\,,c]/\ZO[\,\cdot\,,d]$ along the boundary arc $(dc)\subset\partial_\mathrm{ext}\Omega$ (e.g., see \cite[Section~4.1]{Che12}) imply the following estimate for the partition function $\ZO[(ab),(cd)]$ of random walks in topological rectangles.

\begin{cor}[{\bf \cite[Proposition~4.7]{Che12}}]
\label{thm:unif-comp-cross-ratio-part-fct} Let $\Omega$ be a discrete domain with four vertices $a,b,c,d$ in $\partial_\mathrm{ext}\Omega$ listed counterclockwise. Then
\begin{equation}\label{eq:Zabcd-estimate}
\ZO[(ab),(cd)]\gtrsim \sqrt{\frac{\ZO[a,c]\ZO[b,d]}{\ZO[a,b]\ZO[c,d]}},
\end{equation}
where the constant in $\gtrsim$ is independent of the domain.
\end{cor}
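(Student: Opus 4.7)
The plan is to decompose according to the starting point $x\in(ab)$, apply Theorem~\ref{thm:factorization-triangle} to each summand $\ZO[x,(cd)]$, and then exploit the boundary monotonicity of the ratio $\ZO[\cdot,c]/\ZO[\cdot,d]$ recalled before the statement to reassemble the estimate. Since for each $x\in(ab)$ the points $x,c,d$ lie on $\partial_\mathrm{ext}\Omega$ in counterclockwise order, Theorem~\ref{thm:factorization-triangle} yields
\[
\ZO[x,(cd)] \;\asymp\; \sqrt{\ZO[x,c]\ZO[x,d]/\ZO[c,d]}.
\]
Summing over $x\in(ab)$ reduces the claim to the inequality
\[
\sum_{x\in(ab)} \sqrt{\ZO[x,c]\ZO[x,d]} \;\gtrsim\; \sqrt{\ZO[a,c]\ZO[b,d]/\ZO[a,b]}.
\]

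To prove this reduced inequality, I would invoke the monotonicity of $\ZO[\cdot,c]/\ZO[\cdot,d]$ along the counterclockwise arc $(dc)\subset\partial_\mathrm{ext}\Omega$ (which contains $(ab)$) on which the ratio is increasing from $d$ toward $c$. This gives $\ZO[x,c]/\ZO[x,d]\le \ZO[b,c]/\ZO[b,d]$ for every $x\in(ab)$, so that
\[
\sqrt{\ZO[x,c]\ZO[x,d]} \;\ge\; \ZO[x,c]\cdot\sqrt{\ZO[b,d]/\ZO[b,c]}.
\]
Summing produces the lower bound $\ZO[(ab),c]\cdot\sqrt{\ZO[b,d]/\ZO[b,c]}$. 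Now a second application of Theorem~\ref{thm:factorization-triangle}, this time with base point $c$ and the triple $c,a,b$ counterclockwise (so that the arc $(ab)$ indeed does not contain $c$), gives $\ZO[(ab),c] = \ZO[c,(ab)] \asymp \sqrt{\ZO[a,c]\ZO[b,c]/\ZO[a,b]}$.

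Multiplying the three estimates,
\[
\ZO[(ab),(cd)] \;\gtrsim\; \frac{1}{\sqrt{\ZO[c,d]}}\cdot\sqrt{\frac{\ZO[b,d]}{\ZO[b,c]}}\cdot\sqrt{\frac{\ZO[a,c]\ZO[b,c]}{\ZO[a,b]}} \;=\; \sqrt{\frac{\ZO[a,c]\ZO[b,d]}{\ZO[a,b]\ZO[c,d]}},
\]
which is the desired bound. The whole argument is essentially bookkeeping once Theorem~\ref{thm:factorization-triangle} and the monotonicity on $(dc)$ are used as black boxes; the only nontrivial choice is the direction in the monotonicity inequality, where comparing $\rho(x)$ to $\rho(b)$ (rather than $\rho(a)$) is what puts $\ZO[b,d]$ into the numerator. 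The symmetric choice would yield the equivalent estimate with $\ZO[a,d]/\ZO[a,c]$ replacing $\ZO[b,d]/\ZO[b,c]$ in the intermediate step, reflecting the built-in $(a,c)\leftrightarrow(b,d)$ symmetry of the final bound.
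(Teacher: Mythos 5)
Your proof is correct, and it uses precisely the two ingredients the paper flags in the sentence preceding the corollary—Theorem~\ref{thm:factorization-triangle} applied with base points in $(ab)$ and then with base point $c$, together with the monotonicity of $\ZO[\cdot,c]/\ZO[\cdot,d]$ along the arc $(dc)$—so it matches the paper's indicated route (the full argument being deferred there to \cite[Proposition~4.7]{Che12}). The bookkeeping, including the direction of the monotonicity (comparing the ratio at $x\in(ab)$ to its value at $b$) and the counterclockwise ordering $c,a,b$ needed for the second application of the factorization theorem, all check out.
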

\begin{remark}If boundary arcs $(ab)$ and $(cd)$ are ``not too close to each other'', the one-sided estimate of $\ZO[(ab),(cd)]$ given above can be replaced by $\asymp$, see \cite[Eq.~(4.1),(4.3)]{Che12}, but we do not need this sharper result in our paper.\end{remark}

\subsection{Separators\label{sub:separators}}

A crucial concept in the following study is the notion of separators. They will allow us to perform an efficient surgery of discrete domains. Informally speaking, a separator between two marked boundary arcs $A$ and $B$ of a discrete domain $\Omega$ is a cross-cut which splits $\Omega$ into two pieces $\Omega_A\supset A$ and $\Omega_B\supset B$ in a ``good'' manner from the harmonic measure point of view. In principle, there are several possible ways to choose such cross-cuts, below we use the construction from \cite{Che12}.

Given a discrete domain $\Omega$ with four vertices $a_1,a_2,b_1,b_2\in\partial_\mathrm{ext}\Omega$ listed counterclockwise and a real parameter $k>0$, denote
\[\begin{array}{rcl}
\Omega_A=\Omega_A^B[k]&\!\!:=\!\!&\{u\in\Omega: \ZO[u;A]\geq k\ZO[u;B]\}\,,\\ \Omega_B=\Omega_B^A(k^{-1})&\!\!:=\!\!&\{u\in\Omega: \ZO[u;A]< k\ZO[u;B]\}\,,
\end{array}\]
where $A=(a_1a_2)$ and $B=(b_1b_2)$. Let
\[
L_k:=\{xy\in\mathcal{E}(\Omega):x\in\Omega_A^B[k],~y\in\Omega_B^A(k^{-1})\}.
\]
We call $L_k$ a \emph{discrete cross-cut} separating $A$ and $B$  in $\Omega$ if both $\Omega_A^B[k]$ and $\Omega_B^A(k^{-1})$ are nonempty and connected (this can fail, e.g., if there exist two edges ${xa},{xb}\in\mathcal{E}_\mathrm{ext}(\Omega)$ with $a\in A$ and $b\in B$ or if $k$ is chosen inappropriately so that one of the sets $\Omega_A$ and $\Omega_B$ is ``too thin''). The set $L_k$ can be understood as a part of $\partial_{\rm ext}\Omega_A$ as well as a part of $\partial_{\rm ext}\Omega_B$.


\begin{thm}[{\bf \cite[Theorem~5.1]{Che12}}]
\label{thm:existence-separators} Let $\Omega$, $A$, $B$, $k$, $\Omega_A$, $\Omega_B$ and $L_k$ be as above.

\smallskip

\noindent (i) For each $K\geq 1$, if $\ZO[A,B]\leq K$ and $K^{-1}\leq k\leq K$, then
\begin{equation} \label{separators-properties}\begin{array}{rl}
\mathrm{Z}_{\Omega_A}[A,L_k] \cdot \mathrm{Z}_{\Omega_B}[L_k,B] & \!\!\asymp \ \ZO[A,B],\\
\mathrm{Z}_{\Omega_A}[A,L_k] \,/\, \mathrm{Z}_{\Omega_B}[L_k,B] & \!\!\asymp \ k,
\end{array}\end{equation}
where constants in $\asymp$ may depend on $K$ but are independent of $\Omega$, $A$, $B$ and $k$.

\smallskip

\noindent (ii) There exists a constant $\kappa_0>0$ such that if $\ZO[A,B]\leq \kappa_0$ and $\kappa_0^{-1}\ZO[A,B]\leq k\leq \kappa_0(\ZO[A,B])^{-1}$, then both estimates (\ref{separators-properties}) are fulfilled with some absolute constants. Moreover, in this case $\Omega_A$ and $\Omega_B$ are always connected.
\end{thm}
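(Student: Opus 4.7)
The proof is organized around the function $h(u) := \ZO[u,A]/\ZO[u,B]$, the ratio of the two discrete harmonic measures viewed from $u$. By construction $L_k$ is the set of edges across which $h$ crosses the value $k$, and since $\ZO[\cdot,A]$ and $\ZO[\cdot,B]$ each change by a bounded multiplicative factor across adjacent interior vertices, one has $h(\ell)\asymp k$ uniformly for $\ell$ an endpoint of an edge in $L_k$, with an absolute constant. The plan is to reduce everything to (a) a strong Markov decomposition of $A$-to-$B$ walks at~$L_k$, (b) a ``no-return'' estimate controlling the number of $L_k$-crossings of such walks, and (c) a boundary-Harnack / concentration comparison relating $\mathrm{Z}_{\Omega_A}[A,\cdot]$ and $\mathrm{Z}_{\Omega_B}[\cdot,B]$ on $L_k$, itself reducible to Theorem~\ref{thm:factorization-triangle} applied separately in $\Omega_A$ and $\Omega_B$.

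\emph{Factorization estimate in (i).} Decomposing walks at the first hit of $L_k$ gives the exact identity
\begin{equation*}
\ZO[A,B] \;=\; \sum_{\ell\in L_k} \mathrm{Z}_{\Omega_A}[A,\ell]\cdot \ZO[\ell,B].
\end{equation*}
Each time a walk starting from $\ell\in L_k$ crosses back into $\Omega_A$ and returns to $L_k$ without reaching $B$, it accumulates a multiplicative factor $p_{\max}<1$; the hypothesis $\ZO[A,B]\le K$ with $k\in[K^{-1},K]$ forces $p_{\max}$ to be bounded away from $1$ (otherwise repeated excursions would blow $\ZO[A,B]$ past $K$), and summing the resulting geometric series yields $\ZO[\ell,B]\asymp \mathrm{Z}_{\Omega_B}[\ell,B]$ uniformly in $\ell$. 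Combining this with step~(c) produces
\begin{equation*}
\ZO[A,B] \;\asymp\; \sum_{\ell\in L_k} \mathrm{Z}_{\Omega_A}[A,\ell]\,\mathrm{Z}_{\Omega_B}[\ell,B] \;\asymp\; \mathrm{Z}_{\Omega_A}[A,L_k]\cdot \mathrm{Z}_{\Omega_B}[L_k,B].
\end{equation*}

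\emph{Ratio estimate in (i).} The identity $\ZO[A,\ell] = h(\ell)\cdot\ZO[\ell,B]$ (direct from the definition of $h$), the ``no-return'' comparisons $\mathrm{Z}_{\Omega_A}[A,\ell]\asymp\ZO[A,\ell]$ and $\mathrm{Z}_{\Omega_B}[\ell,B]\asymp\ZO[\ell,B]$ applied on both sides, together with $h(\ell)\asymp k$ on $L_k$, yield
\begin{equation*}
\frac{\mathrm{Z}_{\Omega_A}[A,L_k]}{\mathrm{Z}_{\Omega_B}[L_k,B]} \;\asymp\; \frac{\sum_{\ell\in L_k}h(\ell)\,\ZO[\ell,B]}{\sum_{\ell\in L_k}\ZO[\ell,B]} \;\asymp\; k.
\end{equation*}

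\emph{Part (ii) and connectedness.} When $\ZO[A,B]\le\kappa_0$ is small, the full range $\kappa_0\ZO[A,B]\le k\le \kappa_0/\ZO[A,B]$ forces $L_k$ to sit ``harmonically far'' from both $A$ and $B$; the factor $p_{\max}$ in the no-return estimate is then controlled purely by $\kappa_0$, and the whole argument above runs with absolute constants. The main obstacle --- and the step I expect to require the greatest care --- is the \emph{connectedness} of $\Omega_A^B[k]$ and $\Omega_B^A(k^{-1})$: a priori these are super-/sub-level sets of a ratio $h$ of two positive harmonic functions and could in principle have isolated pockets. The key observation is that $F := \ZO[\cdot,A] - k\,\ZO[\cdot,B]$ is itself discrete-harmonic in the interior of $\Omega$ and changes sign exactly across $L_k$. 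A discrete maximum-principle argument for $F$, combined with the sharp control over $h$ on $\partial_\mathrm{ext}\Omega$ provided by the hypothesis $\ZO[A,B]\le\kappa_0$, rules out any pocket of $\{F\ge 0\}$ not containing $A$ (such a pocket would have $F\le 0$ along its entire boundary within $\Omega$ while $F\ge 0$ inside, forcing $F\equiv 0$ and thus a contradiction).
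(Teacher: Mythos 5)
First, a structural point: the paper does not prove this statement at all --- it is imported verbatim as \cite[Theorem~5.1]{Che12}, so there is no ``paper's own proof'' here to compare against. Judging your blind sketch on its own merits, the overall shape (Harnack comparison on $L_k$, first-passage decomposition at $L_k$, then a comparison relating $\ZO$ and $\mathrm{Z}_{\Omega_B}$) is a reasonable skeleton, and the identity $\ZO[A,B]=\sum_{\ell}\mathrm{Z}_{\Omega_A}[A,\ell]\,\ZO[\ell,B]$ and the Harnack estimate $h(\ell)\asymp k$ on $L_k$ are correct. But two key steps have genuine gaps.

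The ``no-return'' estimate $\ZO[\ell,B]\asymp\mathrm{Z}_{\Omega_B}[\ell,B]$ is the heart of the matter and is just asserted. The mechanism you invoke --- ``if $p_{\max}$ were close to $1$, repeated excursions would blow $\ZO[A,B]$ past $K$'' --- does not hold up. The quantity $\ZO[A,B]$ counts walks from $A$ to $B$; walks started on $L_k$ that repeatedly excurse into $\Omega_A$ and return to $L_k$ \emph{without hitting $A$} are invisible to it, so the hypothesis $\ZO[A,B]\le K$ places no direct bound on such excursions. Proving that walks from $L_k$ gain only a bounded multiplicative factor by re-entering $\Omega_A$ is precisely the delicate harmonic-measure estimate the theorem is about, and the sketch provides no actual argument for it. Note also that for $K\ge 1$ with $k\in[K^{-1},K]$ the hypothesis is essentially vacuous in many geometries, which makes the claimed geometric-series control even less plausible as stated.

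The connectedness argument in part (ii) fails as written. You take a component $P$ of $\Omega_A=\{F\ge 0\}$ not containing $A$ and claim $F\le 0$ on all of $\partial P$, then invoke the maximum principle. But $\partial P$ need not lie entirely in $\Omega_B$: $P$ may abut $\partial_\mathrm{ext}\Omega$ along the two arcs separating $A$ from $B$, and by \eqref{ZXext=ZX} the value $F(x_\mathrm{ext})$ has the \emph{same sign} as $F(x)$ for $x\in P$, so $F\ge 0$ on that part of the boundary. The maximum principle then gives no contradiction. You flag this as the delicate step and gesture at ``sharp control over $h$ on $\partial_\mathrm{ext}\Omega$,'' but no actual mechanism is supplied; in \cite{Che12} the connectedness under the smallness hypothesis requires a more careful geometric argument than a bare maximum principle for $F$.
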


Let us give a corollary which will be particularly useful for us:
\begin{cor}
\label{cor:small-separator} Let $\Omega$ be a discrete domain with four vertices $a,b,c,d$ in $\partial_\mathrm{ext}\Omega$ listed counterclockwise. Set $A=(ab)$ and $B=(cd)$. There exist two absolute constants $\zeta_0,\ep_0\in (0,1)$ such that the following holds.
If $\ZO[A,B]\leq \zeta_0$ and a real number $\zeta$ is chosen so that \mbox{$\zeta_0^{-1}\ZO[A,B]\leq \zeta\leq 1$}, then one can find $k=k(\zeta)$ such that
$L=L_k$ is a discrete cross-cut separating $A$ and $B$ in $\Omega$ with
\begin{equation}\label{eq:separators-Zab-factorization}
\ZO[A,B]\asymp \mathrm{Z}_{\Omega_A}[A,L] \cdot \mathrm{Z}_{\Omega_B}[L,B].
\end{equation}
Above, constants in $\asymp$ are independent of $(\Omega,a,b,c,d)$, and $\ep_0\zeta\leq \mathrm{Z}_{\Omega_A}[A,L]\leq \zeta$.
\end{cor}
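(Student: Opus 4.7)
The plan is to reduce the corollary directly to Theorem~\ref{thm:existence-separators}(ii) by choosing a single value $k^{\ast}$ of the separator parameter so that the resulting cross-cut $L_{k^{\ast}}$ gives $\mathrm{Z}_{\Omega_A}[A,L_{k^{\ast}}]\asymp\zeta$. Once this is arranged, both conclusions (the factorization~(\ref{eq:separators-Zab-factorization}) and the two-sided pinch $\epsilon_0\zeta\leq\mathrm{Z}_{\Omega_A}[A,L]\leq\zeta$) fall out of the two estimates~(\ref{separators-properties}) by multiplying them, and then tuning absolute constants so that the upper side becomes $\zeta$ rather than a constant multiple of~$\zeta$.

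Concretely, I would try $k^{\ast}:=c\,\zeta^{2}/\ZO[A,B]$ for a small absolute constant $c>0$ to be fixed below. The hypotheses $\ZO[A,B]\leq\zeta_0$ and $\zeta_0^{-1}\ZO[A,B]\leq\zeta\leq 1$ place $k^{\ast}$ in the admissible window $[\kappa_0^{-1}\ZO[A,B],\,\kappa_0(\ZO[A,B])^{-1}]$ of Theorem~\ref{thm:existence-separators}(ii): the lower endpoint reduces to $\zeta_0^{2}\leq c\kappa_0$ and the upper endpoint to $c\zeta^{2}\leq\kappa_0$, the latter being implied by $c\leq\kappa_0$ and $\zeta\leq 1$. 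Part~(ii) of that theorem then guarantees, simultaneously, that $L:=L_{k^{\ast}}$ is a discrete cross-cut separating $A$ from $B$ (with $\Omega_A,\Omega_B$ both connected) and that
\[
\mathrm{Z}_{\Omega_A}[A,L]\cdot\mathrm{Z}_{\Omega_B}[L,B]\,\asymp\,\ZO[A,B],\qquad \mathrm{Z}_{\Omega_A}[A,L]\,/\,\mathrm{Z}_{\Omega_B}[L,B]\,\asymp\,k^{\ast}
\]
with absolute constants.

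Multiplying these two relations gives $\mathrm{Z}_{\Omega_A}[A,L]^{2}\asymp k^{\ast}\ZO[A,B]=c\,\zeta^{2}$; rewrite it as $C_{\mathrm{low}}\sqrt{c}\,\zeta\leq\mathrm{Z}_{\Omega_A}[A,L]\leq C_{\mathrm{upp}}\sqrt{c}\,\zeta$ with absolute $C_{\mathrm{low}}\leq C_{\mathrm{upp}}$. Now I would fix the constants by setting $c:=\min(\kappa_0,\,C_{\mathrm{upp}}^{-2})$ and $\zeta_0:=\sqrt{c\kappa_0}$ (note $\zeta_0\leq\kappa_0$ automatically since $c\leq\kappa_0$). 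The choice of $c$ forces $C_{\mathrm{upp}}\sqrt{c}\leq 1$, yielding the strict upper bound $\mathrm{Z}_{\Omega_A}[A,L]\leq\zeta$, while the matching lower bound reads $\mathrm{Z}_{\Omega_A}[A,L]\geq\epsilon_0\zeta$ with $\epsilon_0:=C_{\mathrm{low}}\sqrt{c}$, an absolute constant in $(0,1)$. The factorization~(\ref{eq:separators-Zab-factorization}) is then just the first of the two estimates above.

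There is no substantive obstacle: Theorem~\ref{thm:existence-separators}(ii) does the genuine analytic work, and in particular it already delivers connectedness of $\Omega_A,\Omega_B$ and both factorization estimates uniformly across the entire admissible window of $k$. What remains is a one-parameter interpolation (the choice $k^{\ast}\propto\zeta^{2}/\ZO[A,B]$ is forced by the requirement $\mathrm{Z}_{\Omega_A}[A,L]\asymp\zeta$) plus careful bookkeeping of absolute constants to turn the $\asymp\zeta$ bound into the precise window $[\epsilon_0\zeta,\zeta]$; the only non-trivial verification is that $k^{\ast}$ lies in the admissible range of part~(ii), which is immediate from the assumed range of $\zeta$.
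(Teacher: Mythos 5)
Your proposal is correct and follows essentially the same route as the paper's proof: both apply Theorem~\ref{thm:existence-separators}(ii), multiply the two estimates in~(\ref{separators-properties}) to obtain $\mathrm{Z}_{\Omega_A}[A,L_k]^2\asymp k\,\ZO[A,B]$, and then solve for $k\propto\zeta^2/\ZO[A,B]$, tuning the proportionality constant (your $c$, the paper's $c_2^{-2}$) so that the upper end of the $\asymp$ window is exactly $\zeta$. The only difference is cosmetic bookkeeping of the absolute constants (you take $c:=\min(\kappa_0,C_{\mathrm{upp}}^{-2})$ directly, whereas the paper normalizes with a WLOG assumption $c_1^2\le\kappa_0^{-1}\le c_2^2$).
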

\begin{proof} As soon as $\zeta_0\leq \kappa_0$ and $\kappa_0^{-1}\ZO[A,B]\leq k\leq \kappa_0(\ZO[A,B])^{-1}$, Theorem~\ref{thm:existence-separators}(ii) guarantees that $L_k$ is a discrete cross-cut separating $A$ and $B$ in $\Omega$ such that the estimates (\ref{separators-properties}) are fulfilled with some absolute constants. In particular, in this case we have
\[
\ZO[A,B]\asymp k^{-1}\cdot (\mathrm{Z}_{\Omega_A}[A,L_k])^2.
\]
i.e., there exist two absolute constants $c_1,c_2>0$ such that
\[
c_1\sqrt{k\ZO[A,B]}\leq\mathrm{Z}_{\Omega_A}[A,L_k]\leq c_2\sqrt{k\ZO[A,B]}\,.
\]
Without loss of generality, we may assume that $c_1^2\leq \kappa_0^{-1}\leq c_2^2$.
Let $\ep_0:={c_1/c_2}$, $\zeta_0:=\ep_0\kappa_0$ and choose $k(\zeta):=\zeta^2/(c_2^2\ZO[A,B])$. It is easy to check that our assumptions on $\zeta$ imply $\kappa_0^{-1}\ZO[A,B]\leq k(\zeta)\leq \kappa_0(\ZO[A,B])^{-1}$, as needed.
\end{proof}

\subsection{\label{sub:discrete-extremal-length} Discrete extremal length} A very useful tool when dealing with discrete harmonic measures in topological
rectangles is a discrete version of the classical extremal length. Recall that topological rectangle is a discrete domain $\Omega$ with four marked boundary points $a,b,c,d$ on $\partial\Omega$ (and not on $\partial_\mathrm{ext}\Omega$ as it was in Sections~\ref{sub:random-walks},~\ref{sub:separators}) listed counterclockwise. Given $(\Omega,a,b,c,d)$, let $\ell_\Omega[(ab),(cd)]$ denote the resistance of the electrical network $\Omega$ (with unit conductances on all edges $e\in\mathcal{E}(\Omega)$) between $(ab)$ and $(cd)$. Equivalently, one can define $\ell_\Omega[(ab),(cd)]$ as the solution to the following extremal problem:
\begin{equation}\label{ell definition}
\ell_\Omega[(ab),(cd)]:=\sup_{g:\mathcal{E}(\Omega)\to \mathbb{R}_+}\frac{[\inf_{\gamma:(ab)\leftrightarrow(cd)}\sum_{e\in\gamma}g_e]^2}{\sum_{e\in\mathcal{E}(\Omega)}g_e^2},
\end{equation}
where the infimum is taken over all nearest-neighbor paths $\gamma$ connecting $(ab)$ and $(cd)$,
see \cite[Section~6]{Che12} for details. It is important that the discrete extremal length measures the distance between $(ab)$ and $(cd)$ in a particularly robust manner as it is discussed below. 

In order to make the statements precise we need an additional notation. Given $x\in\partial\Omega$, let $x_\mathrm{ext}\in\partial_\mathrm{ext}\Omega$ be the corresponding external vertex (if there are several external edges incident to $x$, we fix $xx_\mathrm{ext}$ to be the last of them when tracking $\partial_\mathrm{ext}\Omega$ counterclockwise).
 Thus, $xx_\mathrm{ext}\in\mathcal{E}_\mathrm{ext}(\Omega)$ and, by definition, this is the only edge of $\overline{\Omega}$ incident to $x_\mathrm{ext}$. Further, let $xx_\mathrm{ext}x''x'$ be a face of $\mathbb{Z}^2$ to the left of $\overrightarrow{xx_\mathrm{ext}}$ and $x^*$ denote the center of this face.  Provided that $\Omega$ does not contain bridges, we have $xx'\in\mathcal{E}(\Omega)$ and $x^*\in\Omega^*\setminus\Omega^*_\mathrm{int}$. Moreover one can naturally identify $x^*$ with the external vertex of $\Omega^*_\mathrm{int}$ associated to the dual edge $(xx')^*$.

For a topological quadrilateral $(\Omega,a,b,c,d)$, let $\ell_{\overline{\Omega}}[(a_\mathrm{ext}b_\mathrm{ext}),(c_\mathrm{ext}d_\mathrm{ext})]$ denote the resistance between the corresponding external boundary arcs in $\overline{\Omega}$. Provided that $\Omega$ does not contain bridges, let $\ell_{\Omega^*}[(a^*b^*),(c^*d^*)]$ denote the corresponding resistance in the extension of $\Omega^*_\mathrm{int}$ (in this notation we use $\Omega^*$ instead of $\overline{\Omega^*_\mathrm{int}}$ for shortness, see Remark~\ref{rem:dual-conductances}). Then
\begin{itemize}
\item $\ell_\Omega[(ab),(cd)]\leq\ell_{\overline\Omega}[(a_\mathrm{ext}b_\mathrm{ext}),(c_\mathrm{ext}d_\mathrm{ext})]\leq \ell_\Omega[(ab),(cd)]+4(2\sqrt{2}\!-\!1)$ (note that the boundary arcs $(ab),(cd)\subset\partial\Omega$ may share a vertex while it is impossible for $(a_\mathrm{ext}b_\mathrm{ext})$ and $(c_\mathrm{ext}d_\mathrm{ext})$, thus the above extremal length in $\overline\Omega$ is always strictly positive);
\item provided that $\Omega$ does not contain bridges, one has
    \begin{equation}\label{ell comparable}
    \ell_{\Omega^*}[(a^*b^*),(c^*d^*)]\asymp \ell_{\overline\Omega}[(a_\mathrm{ext}b_\mathrm{ext}),(c_\mathrm{ext}d_\mathrm{ext})]
    \end{equation}
    (note that such a general result would \emph{not} hold for the RW partition functions $\ZO$);
\item discrete extremal lengths satisfy the following self-duality property:
    \begin{equation}\label{ell self-duality}
    \ell_{\overline\Omega}[(a_\mathrm{ext}b_\mathrm{ext}),(c_\mathrm{ext}d_\mathrm{ext})]\cdot \ell_{\overline\Omega}[(b_\mathrm{ext}c_\mathrm{ext}),(d_\mathrm{ext}a_\mathrm{ext})]\asymp 1,
    \end{equation}
    where the constants in $\asymp$ do not depend on $(\Omega,a,b,c,d)$.
\end{itemize}
The property (\ref{ell comparable}) is a direct corollary of \cite[Proposition~6.2]{Che12}: both extremal lengths are uniformly comparable to their continuous counterparts which are uniformly comparable to each other (also, one can easily modify the proof given in~\cite{Che12} so that to have the same continuous approximations for both discrete extremal lengths). The property (\ref{ell self-duality}) also immediately follows from the comparison with continuous extremal lengths which are known to be inverse of each other, see~\cite[Corollary~6.3]{Che12}.

At the same time, the discrete extremal lengths allows one to control the RW partition functions in $\Omega$ with Dirichlet boundary conditions. Recall that, following~\cite{Che12}, in Section~\ref{sub:random-walks} we formally work with the external boundary $\partial_\mathrm{ext}\Omega$ and not with $\partial\Omega$ but $\ZO[(a_\mathrm{ext}b_\mathrm{ext}),(c_\mathrm{ext}d_\mathrm{ext})]\asymp \ZO[(ab),(cd)]$ with some absolute constants in $\asymp$, see (\ref{ZXext=ZX}).

\begin{theorem}[{\cite[Theorem~7.1]{Che12}}] \label{thm:ell-Z-bounds}
There exist two continuous decreasing functions $\zeta_1,\zeta_2:\mathbb{R}_+\to \mathbb{R_+}$ such that, for all topological rectangles $(\Omega,a,b,c,d)$,
\[\begin{aligned}
&\text{if}~\ell_{\overline\Omega}[(a_\mathrm{ext}b_\mathrm{ext}),(c_\mathrm{ext}d_\mathrm{ext})]\leq L,~\text{then}~\ZO[(ab),(cd)]\geq\zeta_1(L);\\
&\text{if}~\ell_{\overline\Omega}[(a_\mathrm{ext}b_\mathrm{ext}),(c_\mathrm{ext}d_\mathrm{ext})]\geq L,~\text{then}~\ZO[(ab),(cd)]\leq\zeta_2(L).
\end{aligned}\]
Moreover, $\zeta_1(L)\leq \zeta_2(L)$, $\zeta_2(L)\to 0$ as $L\to\infty$, and $\zeta_1(L)\to \infty$ as $L\to 0$.
\end{theorem}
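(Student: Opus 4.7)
By the external-boundary identity (\ref{ZXext=ZX}), the two quantities $\ZO[(ab),(cd)]$ and $\ZO[A,B]$, where $A:=(a_\mathrm{ext}b_\mathrm{ext})$ and $B:=(c_\mathrm{ext}d_\mathrm{ext})$, differ by a universal multiplicative constant. I~thus work throughout on $\overline{\Omega}$ with both the extremal length $\ell:=\ell_{\overline\Omega}[A,B]$ and the partition function $Z:=\ZO[A,B]$. The plan rests on two discrete harmonic functions on $\Omega$: the \emph{capacitary potential} $\psi$ (with $\psi=0$ on $A$, $\psi=1$ on $B$ and reflecting conditions on $(bc)_\mathrm{ext}\cup(da)_\mathrm{ext}$) and the \emph{harmonic measure} $h$ (with the same Dirichlet conditions on $A,B$ and $h=0$ also on the side arcs). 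By Dirichlet's principle, $\mathcal E(\psi)=1/\ell$, and summing fluxes gives $\sum_{x\in A}\mathrm{w}_{x,x_\mathrm{ext}}\psi(x_\mathrm{int})=1/\ell$; by (\ref{eq:Z=RWhittingP})--(\ref{ZXext=ZX}), $Z\asymp\sum_{x\in A}\mathrm{w}_{x,x_\mathrm{ext}}h(x_\mathrm{int})$; and by the maximum principle $0\le h\le\psi\le 1$.

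\emph{Direction (ii): $\ell\ge L\Rightarrow Z\le\zeta_2(L)$.} This is the easier half. Using $h\le\psi$,
\[
Z\asymp\sum_A\mathrm{w}\,h(x_\mathrm{int})\leq\sum_A\mathrm{w}\,\psi(x_\mathrm{int})=\frac{1}{\ell}\le\frac{1}{L}\,,
\]
so $\zeta_2(L)\lesssim 1/L\to 0$ as $L\to\infty$. One can upgrade to exponential decay by iterating Corollary~\ref{cor:small-separator}: starting from $Z\le\zeta_0$ and applying the corollary with parameter~$\sqrt Z$, one obtains a cross-cut splitting $\overline{\Omega}$ in series into two sub-rectangles with partition functions $\asymp\sqrt Z$; iterating yields $\asymp\log(1/Z)$ series pieces whose extremal lengths are each bounded below by a universal constant (base case), and by series additivity of resistance $\ell\gtrsim\log(1/Z)$.

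\emph{Direction (i): $\ell\le L\Rightarrow Z\ge\zeta_1(L)$.} This is the hard half. The discrete Green identity for $h$ reads
\[
\mathcal E(h)=\sum_{x\in A}\mathrm{w}\,h(x_\mathrm{int})+\sum_{x\in(bc)_\mathrm{ext}\cup(da)_\mathrm{ext}}\mathrm{w}\,h(x_\mathrm{int}),
\]
and $\mathcal E(h)\ge\mathcal E(\psi)=1/\ell\ge 1/L$. Hence it suffices to show that the ``leakage'' $\sum_{(bc)_\mathrm{ext}\cup(da)_\mathrm{ext}}\mathrm{w}\,h$, which by reciprocity is comparable to $\ZO[(bc),B]+\ZO[(da),B]$, is at most a fraction $<1$ of $\mathcal E(h)$, uniformly in $\Omega$. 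I~control it via the factorization of Theorem~\ref{thm:factorization-triangle} (e.g.\ $\ZO[b,(cd)]\asymp\sqrt{\ZO[b,c]\,\ZO[b,d]/\ZO[c,d]}$) and Corollary~\ref{thm:unif-comp-cross-ratio-part-fct}: walks from $(bc)$ to $B$ must exit near the shared corner $c$, so a summation argument along the arc bounds $\ZO[(bc),B]$ by $O(1)$ uniformly, and likewise for $\ZO[(da),B]$ near the corner $d$. Combined with $\mathcal E(h)\ge 1/L\to\infty$ as $L\to 0$, this yields $\sum_A\mathrm{w}\,h\gtrsim 1/L$, hence $Z\gtrsim 1/L$ and $\zeta_1(L)\to\infty$.

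\emph{Main obstacle.} The main difficulty is Direction~(i): the naive inequality $h\le\psi$ gives an upper bound on~$Z$, not a lower bound, so one must instead use the flux identity for~$h$ and control the leakage into the side arcs. Self-duality~(\ref{ell self-duality}) of the extremal length converts part of the problem to the transposed rectangle and allows Direction~(ii) to be reused, but translating smallness of the ``transverse'' partition function $\ZO[(bc),(da)]$ into smallness of $\ZO[(bc),B]$ and $\ZO[(da),B]$ requires uniform Poisson-kernel comparisons robust to rough boundaries, thin passages, and bridges. These comparisons are precisely the discrete complex-analytic estimates developed in \cite{Che12} on which this theorem rests.
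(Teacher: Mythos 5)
Since the paper itself does not prove Theorem~\ref{thm:ell-Z-bounds} --- it is imported wholesale from \cite[Theorem~7.1]{Che12} --- your argument must be evaluated on its own merits. The setup is sound: the reduction to $Z:=\ZO[A,B]\asymp\ZO[(ab),(cd)]$ via~(\ref{ZXext=ZX}), the identification $Z\asymp\sum_{x\in(ab)}\mathrm{w}_{xx_\mathrm{ext}}h(x)$ for $h$ the discrete harmonic measure of~$B$, and the Dirichlet-principle inequality $\mathcal{E}(h)\ge\mathcal{E}(\psi)=1/\ell$ are all correct, as is your Direction~(ii) (giving $Z\lesssim 1/\ell$, which suffices since only $\zeta_2(L)\to 0$ is claimed). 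Direction~(i), however, contains a genuine gap.

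Your argument hinges on the assertion that the ``leakage'' $\ZO[(bc),B]+\ZO[(da),B]$ is bounded by an absolute constant, and hence is a uniform fraction $c<1$ of $\mathcal E(h)$. This is false. Take $\Omega$ to be the $H\times H$ square with $(ab)$ the top side and $(cd)$ the bottom side. For $y\in(bc)$ at distance $k$ from the shared corner $c$, the factorization~(\ref{eq:Zabc-factorization}), together with the standard boundary harmonic-measure asymptotics $\ZO[y,c]\asymp k^{-2}$ and $\ZO[y,d]\asymp\ZO[c,d]\asymp H^{-2}$, gives $\ZO[y,(cd)]\asymp\sqrt{k^{-2}H^{-2}/H^{-2}}=k^{-1}$, so $\ZO[(bc),B]\asymp\sum_{k=1}^{H}k^{-1}\asymp\log H$, unbounded. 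In the same example $\mathcal E(h)\asymp\log H$ as well, so the ratio $\text{leakage}/\mathcal E(h)\to 1$; the true conclusion $\sum_{(ab)}\mathrm w\,h\asymp 1$ holds only because of a cancellation of two $\log H$ terms, which your two one-sided bounds $\mathcal E(h)\ge 1/\ell$ and ``leakage $=O(1)$'' cannot detect. Moreover, even if the leakage were bounded, the inference $\sum_{(ab)}\mathrm w\,h\ge 1/L-O(1)>0$ makes sense only for small $L$, so at best it would give $\zeta_1(L)\to\infty$ as $L\to 0$ and not a uniform positive $\zeta_1(L)$ for all $L$. The actual proof in \cite{Che12} does not proceed by this naive energy balance; it rests on the separator technology (Theorem~\ref{thm:existence-separators}) and a uniform comparison between discrete and continuous extremal lengths --- precisely the discrete-complex-analytic input you acknowledge at the end of your proposal but do not supply.
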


\section{Proof of Theorem~\ref{thm:main-thm}}
\setcounter{equation}{0} \label{sec:proof}

In this section we prove Theorem~\ref{thm:main-thm} by adapting the ideas from \cite{DHN11}. The proof is organized as follows.

In Section~\ref{sub:from-FK-to-RW} we discuss the relation between critical FK-Ising crossing probabilities
with alternating (free/wired/free/wired) boundary conditions and discrete harmonic measures in $\Omega$ and~$\Omega^*_\mathrm{int}$. The main tool is the fermionic observable
introduced in \cite{Smi10} and its version from \cite{CS12} (which was used to compute the scaling limit of crossing probabilities for alternating boundary conditions). Also, we give the lower bound for the first moment of the random variable
\begin{equation}\label{definition N}
\mathbf{N}:=\sum_{u\in(ab)}\sum_{v\in(cd)}\phi_{\Omega}^{\emptyset}[u\leftrightarrow v]~\mathbb{I}_{u\leftrightarrow v}
\end{equation}
in terms of the RW partition function in the dual domain $\Omega^*_\mathrm{int}$. Here, $\mathbb{I}_E$ denotes the indicator function of the event $E$.

In Section~\ref{sub:second-moment-N} we give the upper bound for the second moment of $\mathbf{N}$ in terms of the RW partition function $\ZO$ using discrete complex analysis techniques presented in Section~\ref{sec:discrete complex analysis}.

In Section~\ref{sub:proof-main-thm} we combine these estimates and prove the first part (uniform lower bound) of Theorem~\ref{thm:main-thm}. Finally, we use self-duality arguments from Section~\ref{sub:planar-duality} in order to derive the uniform upper bound for crossing probabilities.

\subsection{From FK-Ising model to discrete harmonic measure\label{sub:from-FK-to-RW}}
Let $(\Omega,a,b,c,d)$ be a topological rectangle, i.e. a discrete domains with four marked boundary vertices on $\partial\Omega$ listed counterclockwise. We consider the critical FK-Ising model on $\Omega$ with alternating boundary conditions $\xi=((ab),(cd))$: all boundary vertices along $(ab)$ are wired, the boundary arc $(cd)$ is wired too, and two other parts of $\partial\Omega$ are free.
The following proposition provides an upper bound for the probability that two wired arcs are connected to each other.

\begin{prop}
\label{prop:bound-cp-wfwf}
For any topological rectangle $\left(\Omega,a,b,c,d\right)$ one has
\begin{equation}\label{ab-cd-upper-bound}
\phi_{\Omega}^{\left(ab\right),\left(cd\right)}\left[\left(ab\right)\leftrightarrow\left(cd\right)\right]\lesssim \sqrt{\ZO\left[\left(ab\right),\left(cd\right)\right]},
\end{equation}
where the constant in $\lesssim$ does not depend on $(\Omega,a,b,c,d)$.
\end{prop}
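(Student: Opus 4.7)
The plan is to use Smirnov's s-holomorphic fermionic observable $F = F_{\Omega,a,b,c,d}$ adapted to the alternating boundary conditions, in the form developed in~\cite{CS12}. Recall that $F$ is defined on medial edges of $\Omega$, is s-holomorphic inside $\Omega$, has constant-modulus values (up to an explicit complex phase) on medial edges of the wired arcs $(ab), (cd)$, and satisfies Dirichlet-type projection conditions on medial edges of the free arcs $(bc), (da)$. I would first invoke the CS12 representation of the crossing probability as a linear boundary functional of $F$ of the form
\begin{equation*}
\phi_\Omega^{(ab),(cd)}\bigl[(ab)\leftrightarrow(cd)\bigr] \;\asymp\; \sum_{e\in (ab)} |F(e)|\,,
\end{equation*}
where the sum runs over medial boundary edges on the wired arc $(ab)$. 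This is the discrete analogue of the boundary representation used in~\cite{CS12} to identify the scaling limit of the left-hand side.

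Next I would introduce the discrete primitive $H := \mathrm{Im}\!\int F^2$. By s-holomorphicity of $F$ it is single-valued and discrete harmonic on $\Omega^*_\mathrm{int}$, with Dirichlet-type boundary data: $H$ is locally constant on the dual arcs adjacent to the free arcs $(bc), (da)$ (where $F^2$ is real-valued of constant sign) and its increments across dual arcs adjacent to wired boundary medial edges $e \in (ab)\cup (cd)$ are precisely $|F(e)|^2$. The total oscillation of $H$ is bounded by an absolute constant, reflecting that $F$ encodes (in a signed fashion) a probability of passage of the exploration interface, so $\sum_{e\in (ab)} |F(e)|^2 \lesssim 1$.

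The heart of the argument would be a maximum-principle comparison of $H$ with the harmonic majorant $\mathrm{Z}_{\Omega^*_\mathrm{int}}[\,\cdot\,,(c^*d^*)]$, followed by a weighted Cauchy--Schwarz. Concretely, the comparison yields, for every medial boundary edge $e\in(ab)$, the pointwise estimate $|F(e)|^2 \lesssim w_e$ with $w_e := \ZO[e,(cd)]$, after transferring between primal and dual via~\eqref{ell comparable}. The factorization estimate $\sum_{e\in (ab)} w_e \asymp \ZO[(ab),(cd)]$ provided by Theorem~\ref{thm:factorization-triangle} and Corollary~\ref{thm:unif-comp-cross-ratio-part-fct} then gives
\begin{equation*}
\Bigl(\sum_{e\in(ab)}|F(e)|\Bigr)^2 \;\leq\; \Bigl(\sum_{e\in(ab)} w_e^{-1}|F(e)|^2\Bigr)\cdot\Bigl(\sum_{e\in(ab)} w_e\Bigr) \;\lesssim\; \ZO\bigl[(ab),(cd)\bigr]\,,
\end{equation*}
which combined with the CS12 representation in the first step yields~\eqref{ab-cd-upper-bound}.

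The main obstacle is verifying the pointwise comparison $|F(e)|^2 \lesssim \ZO[e,(cd)]$ by maximum principle on the dual graph $\Omega^*_\mathrm{int}$. This will require an auxiliary reduction to handle bridges of $\Omega$ (so that $\Omega^*_\mathrm{int}$ is connected and the tools of Section~\ref{sec:discrete complex analysis} apply), as well as a careful identification of the boundary values of $H$ near the four marked corners, where the phase winding of $F$ concentrates; once these are in place, the remaining computation is a direct application of the uniform harmonic-measure estimates from Section~\ref{sec:discrete complex analysis}.
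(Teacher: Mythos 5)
Your high-level instinct (Smirnov's observable $F$, its primitive $H$, harmonic-measure comparison, then the factorization estimates) is in the right ballpark, but the specific chain of inequalities you propose does not close, and the representation you start from is not the one \cite{CS12} actually gives.

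First, the \cite{CS12} relation underpinning this proposition is not $\phi_\Omega^{(ab),(cd)}[(ab)\leftrightarrow(cd)]\asymp\sum_{e\in(ab)}|F(e)|$. What \cite{CS12} (Eq.~6.6) provides is $\mathrm{Q}\asymp\sqrt{1-\varkappa}$, where $\mathrm{Q}$ is a (modified) crossing probability and $\varkappa$ is the constant Dirichlet boundary value of $H$ on the arc $(cd)_\mathrm{poly}\cup(d_\mathrm{ext}a_\mathrm{ext})$. The paper's proof therefore reduces the target estimate to the single scalar bound $1-\varkappa\lesssim\ZO[(ab),(cd)]$, which it extracts by a maximum-principle argument applied directly to the superharmonic function $\varkappa-H$ on a modified domain $\Omega'$, using the nonnegative outer normal derivative of $H$ on the free arcs together with Theorem~\ref{thm:factorization-triangle} and Corollary~\ref{thm:unif-comp-cross-ratio-part-fct}. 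No summation over boundary medial edges, and no weighted Cauchy--Schwarz, is needed.

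Second, even if one grants the representation you posit, your Cauchy--Schwarz step has a gap: with $w_e:=\ZO[e,(cd)]$, the inequality
\begin{equation*}
\Bigl(\sum_{e\in(ab)}|F(e)|\Bigr)^2 \leq \Bigl(\sum_{e\in(ab)} w_e^{-1}|F(e)|^2\Bigr)\Bigl(\sum_{e\in(ab)} w_e\Bigr)
\end{equation*}
only yields the desired conclusion if $\sum_{e}w_e^{-1}|F(e)|^2\lesssim 1$, and neither of your two input bounds delivers this. The pointwise bound $|F(e)|^2\lesssim w_e$ gives $\sum_e w_e^{-1}|F(e)|^2\lesssim|(ab)|$, which is not uniformly bounded. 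The total-oscillation bound $\sum_e|F(e)|^2\lesssim 1$ is irrelevant after reweighting by $w_e^{-1}$, since the weights $w_e$ are typically small. Moreover, the pointwise bound $|F(e)|^2\lesssim\ZO[e,(cd)]$ is itself suspicious: the increment of $H$ across a boundary edge is a discrete normal derivative (Poisson-kernel type quantity), not a harmonic measure, and the two are not comparable edge by edge in rough domains.

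Finally, your sketch omits a preliminary but essential reduction present in the paper: one must first \emph{open} all edges along the internal polyline realizations $(ab)_\mathrm{poly}$ and $(cd)_\mathrm{poly}$ (which by FKG can only increase the crossing probability), and work in the modified domain $\Omega'=\Omega\setminus[(ab)_\mathrm{poly}\cup(cd)_\mathrm{poly}]$. This is what makes the boundary geometry regular enough to import the boundary-value problem for $H$ from \cite{CS12}; without it, the boundary conditions on $H$ cannot be stated cleanly.
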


\begin{proof} The proof essentially uses the construction from \cite[Section~6]{CS12} which we summarize below. Let $(ab)_\mathrm{poly}$ and $(cd)_\mathrm{poly}$ denote two ``internal polyline realizations'' of boundary arcs $(ab)$ and $(cd)$: e.g., $(ab)_\mathrm{poly}\subset\Omega$ consists of all vertices $x\in (ab)$ \emph{together with all ``near to boundary'' vertices of $\Omega$ needed to connect such $x$'s along $(ab)$ remaining in~$\Omega$}, see Fig.~\ref{fig:graph}. By the FKG inequality, the probability of the event $(ab)\leftrightarrow (cd)$ increases if {edges of $(ab)_\mathrm{poly}$ and $(cd)_\mathrm{poly}$ are assumed to be open}. Some trivialities can appear for the new boundary conditions (e.g., if $(ab)_\mathrm{poly}\cap(cd)_\mathrm{poly}\ne \emptyset$) but then (\ref{ab-cd-upper-bound}) holds automatically. Define
\[
\Omega':=\Omega\setminus [(ab)_\mathrm{poly}\cup(cd)_\mathrm{poly}].
\]
Note that the external boundary of $\Omega'$ is composed by the following four arcs:
\[
\partial_\mathrm{ext}\Omega'\subset(ab)_\mathrm{poly}\cup(b_\mathrm{ext}c_\mathrm{ext})\cup (cd)_\mathrm{poly}\cup (d_\mathrm{ext}a_\mathrm{ext}).
\]
In general, $\Omega'$ can be non-connected (e.g. if $(ab)_\mathrm{poly}$ ``envelopes'' some piece of $\Omega$). In this case, we use the same notation $\Omega'$ for the relevant connected component.

For this setup, in \cite[Proof of Theorem 6.1]{CS12}, two discrete {s-holomorphic} observables are introduced, and it is shown that there exists a linear combination $F$ of them and a discrete version $H$ of $\int \mathrm{Im}[F^2dz]$ which is defined on the extension $\overline{\Omega'}$ of $\Omega'$ such that
\begin{itemize}
\item $H$ is a discrete superharmonic function in $\Omega'$ (in~\cite{CS12}, spins in the Ising model live on \emph{faces} of an isoradial graph $\Gamma$, thus our $H$ is $H|_{\Gamma^*}$ in the notation of~\cite{CS12});
\item $H=0$ on $(ab)_\mathrm{poly}$\,, $H=1$ on $(b_\mathrm{ext}c_\mathrm{ext})$ and $H=\varkappa$ on $(cd)_\mathrm{poly}\cup(d_\mathrm{ext}a_\mathrm{ext})$ (recall that we have set all conductances on $\mathcal{E}_\mathrm{ext}(\Omega)$ to be $2(\sqrt{2}\!-\!1)$ instead of $1$ which is equivalent to the ``boundary modification trick'' used in \cite{CS12});
\item $H$ has nonnegative \emph{outer} normal derivative on $(b_\mathrm{ext}c_\mathrm{ext})\cup(d_\mathrm{ext}a_\mathrm{ext})$ (in other words, for each external edge $yy_\mathrm{ext}\in\mathcal{E}_\mathrm{ext}(\Omega')$ on these arcs, one has $H(y)\leq H(y_\mathrm{ext})$);
\item the value $\varkappa$ satisfies $\mathrm{Q}\asymp \sqrt{1\!-\!\varkappa}$, where $\mathrm{Q}$ is the probability of the event that there exists a crossing from $(ab)_\mathrm{poly}$ to $(cd)_\mathrm{poly}$ in $\Omega$ (e.g., see \cite[Eq.~(6.6)]{CS12}).
\end{itemize}
Denote by $a',b',c',d'\in\partial\Omega'$ the boundary vertices of $\Omega'$ such that
\[
(a'_\mathrm{ext}b'_\mathrm{ext})=\partial_\mathrm{ext}\Omega'\cap(ab)_\mathrm{poly}\quad\mathrm{and}\quad (c'_\mathrm{ext}d'_\mathrm{ext})=\partial_\mathrm{ext}\Omega'\cap(cd)_\mathrm{poly}.
\]
Let ${yy_\mathrm{ext}}$ be an external edge of $\Omega'$ with $y_\mathrm{ext}\in (d_\mathrm{ext}a_\mathrm{ext})$. Using $H(y)\leq H(y_\mathrm{ext})=\varkappa$, subharmonicity of the function $\varkappa-H$ and Remark~\ref{rem:hitting-proba}, we conclude that
\[\begin{aligned}
0\leq (1+2\sqrt2)\cdot(\varkappa-H(y))& \leq \varkappa\cdot\mathrm{Z}_{\Omega'}[y,(a'_\mathrm{ext}b'_\mathrm{ext})] +(\varkappa\!-\!1)\cdot\mathrm{Z}_{\Omega'}[y,(b'_\mathrm{ext}c'_\mathrm{ext})] \\ & ={\mathrm{Z}_{\Omega'}[y,(a'_\mathrm{ext}b'_\mathrm{ext})]} - (1\!-\!\varkappa)\cdot \mathrm{Z}_{\Omega'}[y,(a'_\mathrm{ext}c'_\mathrm{ext})].
\end{aligned}\]

We now choose $yy_\mathrm{ext}$ to be the next external edge of $\Omega'$ after $d'd'_\mathrm{ext}$ when tracking $\partial\Omega'$ counterclockwise. Then we have $\mathrm{Z}_{\Omega'}[y,\,\cdot\,]\asymp \mathrm{Z}_{\Omega'}[d',\,\cdot\,]$ with some absolute constants. Hence,
\[\begin{aligned}
1\!-\!\varkappa & \lesssim \frac{\mathrm{Z}_{\Omega'}[d',(a'b')]}{\mathrm{Z}_{\Omega'}[d',(a'c')]}
 \asymp \sqrt{\frac{\mathrm{Z}_{\Omega'}[d',b']\mathrm{Z}_{\Omega'}[a',c']} {\mathrm{Z}_{\Omega'}[a',b']\mathrm{Z}_{\Omega'}[d',c']} }\lesssim \mathrm{Z}_{\Omega'}[(a'b'),(c'd')],
\end{aligned}\]
where we used the uniform factorization property (\ref{eq:Zabc-factorization}) of the discrete harmonic measure of boundary arcs 
in $\Omega'$ and the uniform estimate (\ref{eq:Zabcd-estimate}). Therefore, we get the following sequence of uniform estimates:
\[
\phi_{\Omega}^{\left(ab\right),\left(cd\right)}\left[\left(ab\right)\leftrightarrow\left(cd\right)\right]\leq \mathrm{Q} \asymp \sqrt {1\!-\!\varkappa} \lesssim \sqrt{\mathrm{Z}_{\Omega'}[(a'b'),(c'd')]}\lesssim \sqrt{\ZO[(ab),(cd)]}.
\]
The first inequality is due to the FKG inequality as mentioned above. The last inequality follows from the following consideration: each nearest-neighbor path connecting $(a'b')$ with $(c'd')$ in $\Omega'$ can be completed into a path connecting $(ab)$ with $(cd)$ in~$\Omega$ using a uniformly bounded number of additional edges.
\end{proof}

\begin{remark} \label{rem:wfwf-collapsing}
When $a=b$, boundary conditions become Dobrushin boundary conditions and therefore
\begin{equation}\label{a-cd-upper-bound}
\phi_{\Omega}^{\left(cd\right)}\left[a\leftrightarrow\left(cd\right)\right]\lesssim \sqrt{\ZO\left[a,\left(cd\right)\right]}
\end{equation}
which can be though of as a particular case of (\ref{ab-cd-upper-bound}). This bound can also be proved independently using the basic fermionic observable \cite{Smi10} in $(\Omega,c,d)$, see~\cite{DHN11}.
\end{remark}

Similarly to (\ref{ab-cd-upper-bound}), one can give a lower bound for crossing probabilities with alternating boundary conditions in terms of RW partition functions in the \emph{dual} domain $\Omega^*$, e.g. see \cite[Proposition~3.2]{DHN11} for the corresponding counterpart of \eqref{a-cd-upper-bound}. In our paper we need only the particular case of this estimate when both arcs $(ab)$ and $(cd)$ are collapsed to points. Below we use the notation introduced in Sections~\ref{sub:planar-duality},~\ref{sub:random-walks} and~\ref{sub:discrete-extremal-length}. Recall that, for a given $x\in\partial\Omega$, $\overrightarrow{xx_\mathrm{ext}}\in\partial_\mathrm{ext}\Omega$ denotes the ``most counterclockwise'' external edge incident to $x$, and $x^*\in\Omega^*\setminus\Omega^*_\mathrm{int}$ is the center of the face $xx_\mathrm{ext}x''x'$ lying to the left of this edge.

\begin{proposition} \label{prop:spin-spin-hm} Let $\Omega$ be a discrete domain, $a,c\in\partial\Omega$, and dual vertices $a^*,c^*$ be defined as above. If the other endpoints of dual edges $(aa')^*$ and $(cc')^*$ lie in the same connected component of $\Omega^*_\mathrm{int}$, then
\begin{equation}\label{a-c-lower-bound}
\phi^\emptyset_\Omega[a\leftrightarrow c] \gtrsim \sqrt{\mathrm{Z}_{\Omega^*_\mathrm{int}}[a^*,c^*]}\,,
\end{equation}
where $\mathrm{Z}_{\Omega^*_\mathrm{int}}$ is the RW partition function in this connected component of $\Omega^*_\mathrm{int}$\,.
\end{proposition}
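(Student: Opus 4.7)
The plan is to mirror the proof of Proposition~\ref{prop:bound-cp-wfwf}, but carried out on the dual lattice, using the variant of Smirnov's s-holomorphic observable adapted to two marked boundary points. By the Edwards--Sokal coupling and standard Kramers--Wannier duality, $Q' := \phi_\Omega^\emptyset[a\leftrightarrow c]$ coincides with the free-boundary spin-Ising two-point function $\mathbb{E}^\emptyset_\Omega[\sigma_a\sigma_c]$, which is dual to a disorder--disorder correlator on $\Omega^*_\mathrm{int}$ with insertions at $a^*$ and $c^*$. This is essentially the observable used in \cite{Smi10} and in the standard-rectangle analog of this bound in \cite{DHN11}. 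The hypothesis that $(aa')^*$ and $(cc')^*$ lie in the same connected component of $\Omega^*_\mathrm{int}$ ensures that the observable is well defined and that the argument can be localized to that component.

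The first step is to introduce an s-holomorphic function $F$ on the medial graph of $\Omega^*_\mathrm{int}$, with Riemann--Hilbert type boundary conditions that branch at $a^*$ and $c^*$ and are constant along each of the two resulting dual boundary arcs. As in the proof of Proposition~\ref{prop:bound-cp-wfwf}, one then forms a discrete primitive $H$ of $\mathrm{Im}[F^2\,dz]$ on the extension of $\Omega^*_\mathrm{int}$. The function $H$ is discretely superharmonic in the bulk, takes the constants $0$ and $\varkappa$ along the two dual boundary arcs, and has a controlled outer normal derivative near the degenerate vertices $a^*$ and $c^*$. The key identity relating the observable to the two-point function, analogous to \cite[Eq.~(6.6)]{CS12}, reads in this degenerate setup $\varkappa\asymp (Q')^2$.

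The final step extracts a lower bound on $\varkappa$ from the superharmonicity of $H$ and its boundary values, mirroring the random-walk estimate in the proof of Proposition~\ref{prop:bound-cp-wfwf} but with inequalities reversed, since here the goal is a lower rather than an upper bound. Combining subharmonicity of a suitable affine function of $H$, Remark~\ref{rem:hitting-proba}, the two-point factorization of Theorem~\ref{thm:factorization-triangle}, and the estimate $\mathrm{Z}_{\Omega^*_\mathrm{int}}[a^*,\text{far arc}]\lesssim 1$ for the harmonic measure from $a^*$ of the relevant dual boundary arc, one obtains $\varkappa\gtrsim \mathrm{Z}_{\Omega^*_\mathrm{int}}[a^*, c^*]$. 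Together with $\varkappa\asymp(Q')^2$ this yields the announced bound $(Q')^2\gtrsim \mathrm{Z}_{\Omega^*_\mathrm{int}}[a^*, c^*]$.

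The main obstacle is the correct setup of the dual observable with two-point monodromy and the precise identification $\varkappa\asymp (Q')^2$ relating $F$ to the spin-spin correlator: while \cite[Eq.~(6.6)]{CS12} directly handles the wired/free/wired/free alternating setting of Proposition~\ref{prop:bound-cp-wfwf}, the fully degenerate case in which both wired arcs collapse to single points $a$ and $c$ requires the disorder-based observable of \cite{Smi10}. Adapting the boundary analysis of \cite{CS12} to this degenerate configuration is the technical heart of the argument; once it is done, the discrete complex analysis estimates from Section~\ref{sec:discrete complex analysis} control the rest uniformly in the geometry of $\Omega$.
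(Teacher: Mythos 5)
Your plan differs from the paper's in a significant way, and the difference matters. The paper's proof is essentially a one-liner: it invokes \cite[Proposition~3.2]{DHN11}, which already establishes the lower bound $\phi^{(ab)}_\Omega[(ab)\leftrightarrow c]\gtrsim\sqrt{\mathrm{Z}_{\Omega^*}[\cdot,\cdot]}$ for Dobrushin boundary conditions via the basic fermionic observable of \cite{Smi10}, applied with the wired arc collapsed to a single edge $(aa')$; it then passes from $\phi^{(aa')}_\Omega[(aa')\leftrightarrow c]$ to $\phi^\emptyset_\Omega[a\leftrightarrow c]$ by the finite-energy property (forcing the single edge $aa'$ to be open or closed costs only a bounded multiplicative factor). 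This reduction avoids any free-boundary observable altogether.

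You instead propose to construct from scratch an s-holomorphic observable with a two-point branching structure directly in the free-boundary setup, roughly along the lines of disorder correlators, and to establish an identity $\varkappa\asymp(Q')^2$. That is a genuinely heavier route and, as written, has real gaps. First, the identity $\varkappa\asymp(Q')^2$ is not the form that appears in \cite{CS12} or \cite{DHN11}: the CS12 relation in the alternating setting is $\mathrm{Q}\asymp\sqrt{1-\varkappa}$, with $\varkappa$ the value of $H$ on one arc and $1$ the value on a second, non-degenerate arc; when both wired arcs collapse to points, the arc carrying the boundary value $1$ degenerates along with them, so it is not clear what $\varkappa$ means or how the modified identity would read. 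Proving such a relation in the fully degenerate configuration is exactly the hard part and you acknowledge leaving it open. Second, the two-point-monodromy observable you invoke is closer to the machinery needed for bulk spin/disorder correlators (à la Chelkak--Hongler--Izyurov) than to the boundary Dobrushin observable actually used in \cite{DHN11}; for \emph{boundary} points the problem reduces to the Dobrushin case and no two-point branching is required. Third, you miss the finite-energy step $\phi^\emptyset_\Omega[a\leftrightarrow c]\asymp\phi^{(aa')}_\Omega[(aa')\leftrightarrow c]$, which is precisely what lets the paper sidestep everything you are trying to build. In short: your proposal is not wrong in spirit, but it replaces a two-line reduction by a substantial (and unfinished) program, and the central identity you rely on is asserted rather than justified.
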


\begin{proof}
This proposition is directly obtained from \cite[Proposition~3.2]{DHN11} applied to the case when the wired arc is collapsed to a single edge $(aa')$ and the simple estimate $\phi^\emptyset_\Omega[a\leftrightarrow c]\asymp\phi^{(aa')}_\Omega[(aa')\leftrightarrow c]$ (it is due to the finite-energy property of the model, see \cite{G_book_FK} for details).
\end{proof}

\begin{cor}\label{cor:first-moment}
Let a discrete $\Omega$ do not contain bridges, $a,b,c,d\in\partial\Omega$ be listed counterclockwise, and dual vertices $a^*,b^*,c^*,d^*$ be defined as above.
Then
\[
\phi_\Omega^\emptyset[\mathbf{N}] \gtrsim \mathrm{Z}_{\Omega^*_\mathrm{int}}[(a^*b^*),(c^*d^*)],
\]
where  the constant in $\gtrsim$ does not depend on $(\Omega,a,b,c,d)$.
\end{cor}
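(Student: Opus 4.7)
The plan is to unfold $\phi_\Omega^\emptyset[\mathbf{N}]$ as a sum of squared two-point connection probabilities, apply Proposition~\ref{prop:spin-spin-hm} pointwise, and then identify the resulting sum of dual RW partition functions with $\mathrm{Z}_{\Omega^*_\mathrm{int}}[(a^*b^*),(c^*d^*)]$ up to a universal multiplicative constant.

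First, taking expectation in the definition (\ref{definition N}) of $\mathbf{N}$, and using that $\phi_\Omega^\emptyset[u\leftrightarrow v]$ is a deterministic number for each fixed pair $(u,v)$, I would obtain
\[
\phi_\Omega^\emptyset[\mathbf{N}]\,=\,\sum_{u\in(ab)}\sum_{v\in(cd)}\phi_\Omega^\emptyset[u\leftrightarrow v]^{\,2}.
\]
Under the hypothesis that $\Omega$ contains no bridges, $\Omega^*_\mathrm{int}$ is connected (Section~\ref{sub:planar-duality}), and so the hypothesis of Proposition~\ref{prop:spin-spin-hm} is fulfilled uniformly for every $u\in(ab)$ and $v\in(cd)$: the other endpoints of the dual edges $(uu')^*$ and $(vv')^*$ automatically lie in the unique connected component of $\Omega^*_\mathrm{int}$. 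Squaring (\ref{a-c-lower-bound}) then yields a pointwise lower bound $\phi_\Omega^\emptyset[u\leftrightarrow v]^{2}\gtrsim \mathrm{Z}_{\Omega^*_\mathrm{int}}[u^*,v^*]$ with a universal constant, whence
\[
\phi_\Omega^\emptyset[\mathbf{N}]\,\gtrsim\,\sum_{u\in(ab)}\sum_{v\in(cd)}\mathrm{Z}_{\Omega^*_\mathrm{int}}[u^*,v^*].
\]

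The remaining step is to show that the double sum on the right is bounded below by a universal multiple of $\mathrm{Z}_{\Omega^*_\mathrm{int}}[(a^*b^*),(c^*d^*)]$. The construction of $x^*\in\partial_\mathrm{ext}(\Omega^*_\mathrm{int})$ attached to $x\in\partial\Omega$ recalled in Section~\ref{sub:discrete-extremal-length} makes $x\mapsto x^*$ an injection of the primal arc $(ab)$ into the external arc $(a^*b^*)$, whose image covers $(a^*b^*)$ up to at most a uniformly bounded number of extra external dual vertices concentrated near the endpoints $a^*,b^*$; the analogous statement holds for $(cd)\to(c^*d^*)$. Each such ``missing'' external dual vertex of $\partial_\mathrm{ext}(\Omega^*_\mathrm{int})$ is adjacent in $\overline{\Omega^*_\mathrm{int}}$ to an external vertex lying in the image, so an application of (\ref{ZXext=ZX}) (with the dual-lattice conductances from Remark~\ref{rem:dual-conductances}) bounds its RW partition function to $(c^*d^*)$ by a universal multiple of the corresponding term indexed by a neighbor in the image. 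Summing these finitely many local comparisons supplies the required inequality, and combining with the previous display concludes the proof.

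I expect the main obstacle to be exactly this last bookkeeping step: making the correspondence between $\partial\Omega$ and $\partial_\mathrm{ext}(\Omega^*_\mathrm{int})$ fully precise at the endpoints $a,b,c,d$, and verifying that the conductance-renormalization identity (\ref{ZXext=ZX}) genuinely absorbs the ``missing corners'' into a universal constant. Everything else is a mechanical consequence of Proposition~\ref{prop:spin-spin-hm} and the elementary computation of $\phi^\emptyset_\Omega[\mathbf{N}]$.
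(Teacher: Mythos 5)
Your first two steps coincide with the paper's proof. The entire difference is in the last step, where you treat $x\mapsto x^*$ as a mere injection and plan to absorb ``missing'' external dual vertices, whereas the paper uses — and this is in fact correct — that this map is an \emph{exact bijection} from $(ab)$ onto $(a^*b^*)$, so the double sum is literally equal to $\mathrm{Z}_{\Omega^*_\mathrm{int}}[(a^*b^*),(c^*d^*)]$ with no correction needed. To see the bijection: tracking $\partial_\mathrm{ext}\Omega$ counterclockwise, after the last external edge $\overrightarrow{xx_\mathrm{ext}}$ at a boundary vertex $x$ one wraps around the primal edge $xx'$ (with $x'$ the next boundary vertex), and $x^*$ is precisely the exterior endpoint of $(xx')^*$; since $\Omega$ has no bridges the face on the inner side of $xx'$ belongs to $\Omega$, so $(xx')^*$ is a genuine boundary edge of $\Omega^*_\mathrm{int}$, and conversely every element of $\partial_\mathrm{ext}(\Omega^*_\mathrm{int})$ comes from such an $xx'$ and hence from a unique $x\in\partial\Omega$, in a cyclic-order-preserving way. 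Your proposed patch is therefore superfluous, and as written it contains an error: in $\overline{\Omega^*_\mathrm{int}}$ no two external vertices are ever adjacent (each has degree one, attached only to an interior vertex), so (\ref{ZXext=ZX}) cannot be invoked to compare a ``missing'' external vertex directly to an ``image'' one --- you would have to go through the shared interior neighbor. Because the bijection holds, this flaw is harmless; the last paragraph can simply be deleted in favor of the paper's one-line observation that each $f\in(a^*b^*)$ corresponds to exactly one $u\in(ab)$.
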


\begin{proof} Applying the uniform estimate (\ref{a-c-lower-bound}), we get
\[
\phi^\emptyset_\Omega[\mathbf{N}]\ =\sum_{u\in(ab)}\sum_{v\in(cd)} \phi_{\Omega}^{\emptyset}[u\leftrightarrow v]^2\ \gtrsim \sum_{u\in(ab)}\sum_{v\in(cd)} \mathrm{Z}_{\Omega^*_\mathrm{int}}[u^*,v^*]\ = \mathrm{Z}_{\Omega^*_\mathrm{int}}[(a^*b^*),(c^*d^*)]
\]
as 
each face $f\in (a^*b^*)$ corresponds to exactly one $u\in(ab)$ (and, similarly, for $(c^*d^*)$).
\end{proof}

\subsection{\label{sub:second-moment-N}Second moment estimate for the random variable $\mathbf{N}$}

In this section we prove the crucial second moment estimate for the random variable $\mathbf{N}$ provided that $\ZO[(ab),(cd)]$ is small enough, see Proposition~\ref{prop:second-moment} below. We need a preliminary lemma. Let the absolute constants $\zeta_0,\ep_0>0$ be fixed as in Corollary~\ref{cor:small-separator}.

\begin{lem}
\label{lem:two points} For all topological rectangles $(\Omega,a,b,c,d)$ with $\ZO[(ab),(cd)]\le\zeta_0$, the following is fulfilled:
\begin{equation}
\label{two-points bound}
\phi^{(cd)}_\Omega\big[\{a\leftrightarrow(cd)\}\cap\{b\leftrightarrow(cd)\}\big]~\lesssim~\sqrt{\frac{\ZO[a,(cd)]\ZO[b,(cd)]}{\ZO[(ab),(cd)]}},
\end{equation}
where the constant in $\lesssim$ does not depend on $(\Omega,a,b,c,d)$.
\end{lem}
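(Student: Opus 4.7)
The plan is to reduce the two-point Dobrushin estimate to a two-point crossing bound inside one half of $\Omega$ using a separator, and then to translate the result back via harmonic-measure factorization through $L$.

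Setting $A=(ab)$ and $B=(cd)$, I would first invoke Corollary~\ref{cor:small-separator} with the choice $\zeta\asymp\sqrt{\ZO[A,B]}$ (or $\zeta=\zeta_0^{-1}\ZO[A,B]$ when the former is too small) to obtain a discrete cross-cut $L$ separating $A$ and $B$ in $\Omega$ with $\mathrm{Z}_{\Omega_A}[A,L]\asymp\mathrm{Z}_{\Omega_B}[L,B]\asymp\sqrt{\ZO[A,B]}$. Since any path from $a$ or $b$ to $(cd)$ must cross $L$, and since $\{a\leftrightarrow(cd)\}\cap\{b\leftrightarrow(cd)\}$ is increasing, the domain Markov property combined with monotonicity (Remark~\ref{rem:monotonicity}), applied by replacing the configuration in $\Omega_B$ by the worst case in which all of $L$ is wired to~$(cd)$, gives
\[
\phi^{(cd)}_\Omega\bigl[\{a\leftrightarrow(cd)\}\cap\{b\leftrightarrow(cd)\}\bigr]\ \le\ \phi^{L}_{\Omega_A}\bigl[\{a\leftrightarrow L\}\cap\{b\leftrightarrow L\}\bigr]\,.
\]

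The key step is the two-point Dobrushin-type bound
\[
\phi^{L}_{\Omega_A}\bigl[\{a\leftrightarrow L\}\cap\{b\leftrightarrow L\}\bigr]\ \lesssim\ \sqrt{\mathrm{Z}_{\Omega_A}[a,L]\,\mathrm{Z}_{\Omega_A}[b,L]}\,.\qquad(\star)
\]
I would prove $(\star)$ by conditioning on the outermost open path $\gamma$ from $a$ to $L$ inside $\Omega_A$, explored from the free arc going around the cluster of $a$ on the side toward $b$. On the event $\{a\leftrightarrow L\}$ this interface is well-defined, and the domain Markov property yields that the conditional measure in the complementary sub-domain $\Omega'\ni b$ is Dobrushin with wired arc $L\cup\gamma$. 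Proposition~\ref{prop:bound-cp-wfwf} in its collapsed form (Remark~\ref{rem:wfwf-collapsing}) applied inside $\Omega'$ then gives $\phi[b\leftrightarrow L\cup\gamma\mid\gamma]\lesssim\sqrt{\mathrm{Z}_{\Omega'}[b,L\cup\gamma]}$, and a discrete harmonic-measure comparison (using that the exploration only carves out a region on the $a$-side of $\gamma$, so the new wired arc is no closer to $b$ than $L$ was in $\Omega_A$) upgrades this to $\lesssim\sqrt{\mathrm{Z}_{\Omega_A}[b,L]}$. Averaging over~$\gamma$ and multiplying by $\phi^{L}_{\Omega_A}[a\leftrightarrow L]\lesssim\sqrt{\mathrm{Z}_{\Omega_A}[a,L]}$ (Proposition~\ref{prop:bound-cp-wfwf} applied once more in its collapsed form) yields $(\star)$.

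To conclude, I would use the factorization of harmonic measure through the separator: combining Theorem~\ref{thm:factorization-triangle} applied in $\Omega$ (with the endpoints of $L$ as auxiliary vertices) with the separator estimates of Theorem~\ref{thm:existence-separators} yields
\[
\mathrm{Z}_{\Omega_A}[u,L]\cdot\mathrm{Z}_{\Omega_B}[L,(cd)]\ \asymp\ \ZO[u,(cd)]\qquad\text{for}\ u\in\{a,b\}\,,
\]
and the choice $\zeta\asymp\sqrt{\ZO[A,B]}$ ensures $\mathrm{Z}_{\Omega_B}[L,(cd)]^{2}\asymp\ZO[(ab),(cd)]$. Substituting both identities into $(\star)$ converts the right-hand side into $\sqrt{\mathrm{Z}[a,(cd)]\mathrm{Z}[b,(cd)]/\ZO[(ab),(cd)]}$, which is the claimed bound. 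The main obstacle is step $(\star)$: while the separator reduction and the conversion through $L$ are standard applications of the machinery of Section~\ref{sec:discrete complex analysis}, the two-point Dobrushin bound requires a careful choice of the exploration and a delicate verification that the induced sub-domain $\Omega'$ does not blow up the discrete harmonic measure of $L$ from $b$; an alternative route would be to construct a two-source fermionic observable generalizing the one used in the proof of Proposition~\ref{prop:bound-cp-wfwf}.
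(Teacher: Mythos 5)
Your approach differs structurally from the paper's: you use a \emph{single} separator $L$ between $(ab)$ and $(cd)$ and then attempt a two-point bound $(\star)$ inside $\Omega_A$, whereas the paper first disposes of the case when $\ZO[a,(cd)]$ or $\ZO[b,(cd)]$ is comparable to $\ZO[(ab),(cd)]$ (then the trivial bound via \eqref{a-cd-upper-bound} on a single arm already gives \eqref{two-points bound}), and only afterwards constructs \emph{two} separators $\Gamma_a$, $\Gamma_b$ tailored to the pairs $(a,(cd))$ and $(b,(cd))$ respectively, with the chosen $\zeta=\tfrac13\ZO[(ab),(cd)]$, and shows $\Gamma_a\cap\Gamma_b=\emptyset$. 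This disjointness is the geometric encoding of the fact that $a$ and $b$ are ``well separated'', and it is exactly what makes a multiplicative three-factor bound possible.

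There is a genuine gap in your central claim $(\star)$: it is false without the case distinction you omit. Up to constants, $\sqrt{\mathrm{Z}_{\Omega_A}[a,L]\,\mathrm{Z}_{\Omega_A}[b,L]}$ is the \emph{product} of the one-arm upper bounds $\sqrt{\mathrm{Z}_{\Omega_A}[a,L]}$ and $\sqrt{\mathrm{Z}_{\Omega_A}[b,L]}$, so $(\star)$ asserts that the two increasing (hence positively correlated) events $\{a\leftrightarrow L\}$ and $\{b\leftrightarrow L\}$ behave as if independent. But if $a$ and $b$ are adjacent on $\partial\Omega_A$, the intersection probability is $\asymp\phi^L_{\Omega_A}[a\leftrightarrow L]\asymp\sqrt{\mathrm{Z}_{\Omega_A}[a,L]}$ (by finite energy), while your right-hand side is $\asymp\mathrm{Z}_{\Omega_A}[a,L]$, which is much smaller since the separator is chosen so that $\mathrm{Z}_{\Omega_A}[a,L]\lesssim\sqrt{\ZO[A,B]}\le\sqrt{\zeta_0}\ll 1$. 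Your proposed exploration proof of $(\star)$ breaks precisely here: after conditioning on the path $\gamma$ from $a$ to $L$, the bound $\phi[b\leftrightarrow L\cup\gamma\mid\gamma]\lesssim\sqrt{\mathrm{Z}_{\Omega'}[b,L\cup\gamma]}$ cannot be ``upgraded'' to $\sqrt{\mathrm{Z}_{\Omega_A}[b,L]}$, since $\gamma$ can pass arbitrarily close to $b$ (indeed it typically does when $a$ and $b$ are close), driving $\mathrm{Z}_{\Omega'}[b,L\cup\gamma]$ up to order one.

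A secondary issue is the single-point factorization $\mathrm{Z}_{\Omega_A}[u,L]\cdot\mathrm{Z}_{\Omega_B}[L,(cd)]\asymp\ZO[u,(cd)]$ for $u\in\{a,b\}$. Theorem~\ref{thm:existence-separators} gives this only when $L=L_k$ is a level set of the ratio $\ZO[\cdot,A]/\ZO[\cdot,B]$ for the \emph{same} arc $A$ that appears on the left, i.e.\ it gives $\mathrm{Z}_{\Omega_A}[A,L]\cdot\mathrm{Z}_{\Omega_B}[L,B]\asymp\ZO[A,B]$ and no more. A level set for $(ab)$ is generically not a level set for $\{a\}$ (or $\{b\}$), so the factorization for a single endpoint is not a consequence of the cited results. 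The paper sidesteps this by using separators $\Gamma_a,\Gamma_b$ constructed via Corollary~\ref{cor:small-separator} with $A=\{a\}$ and $A=\{b\}$ respectively, for which Theorem~\ref{thm:existence-separators} does apply in the form needed.
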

\begin{proof}
Without loss of generality we can assume that $\ZO[a,(cd)]$ and $\ZO[b,(cd)]$ are both less or equal to
$\tfrac{\zeta_0}3\ZO[(ab),(cd)]$.
Indeed, assume for instance that $\ZO[a,(cd)]\ge\tfrac{\zeta_0}3\ZO[(ab),(cd)] $. In such case, (\ref{a-cd-upper-bound}) gives
\begin{align*}
\phi^{(cd)}_\Omega\big[\{a\leftrightarrow(cd)\}\cap\{b\leftrightarrow(cd)\}\big]&\leq \phi^{(cd)}_\Omega[b\leftrightarrow(cd)]\lesssim \sqrt{\ZO[b,(cd)]}\\
&\lesssim\sqrt{\frac{\ZO[a,(cd)]\ZO[b,(cd)]}{\ZO[(ab),(cd)]}}\,.
\end{align*}
The same reasoning can be applied if $\ZO[b,(cd)]\ge\tfrac{\zeta_0}{3}\ZO[(ab),(cd)]$.

Provided that both $\ZO[a,(cd)]$ and $\ZO[b,(cd)]$ are less or equal to
$\tfrac{\zeta_0}3\ZO[(ab),(cd)]$, we may apply Corollary~\ref{cor:small-separator} to the discrete domain $\Omega$, boundary arcs $A=a$ and $A=b$, respectively, $B=(cd)$ and $\zeta=\tfrac{1}{3}\ZO[(ab),(cd)]$. Indeed, both $\ZO[a,(cd)]$ and $\ZO[b,(cd)]$ are less or equal to $\ZO[(ab),(cd)]\leq\zeta_0$, and
$$\zeta_0^{-1}\cdot\max\{\,\ZO[a,(cd)],\,\ZO[b,(cd)]\,\}~\le~ \tfrac{1}{3}\ZO[(ab),(cd)]=\zeta ~\le ~ \tfrac{\zeta_0}{3}\le 1.$$

Let $\Gamma_a$ denote the corresponding discrete cross-cut separating $a$ and $(cd)$ in $\Omega$ such that
\begin{equation}
\ep_0\zeta\leq\mathrm{Z}_{\Omega'_a} [\Gamma_a,(cd)]\leq\zeta,\label{bound Gamma1}
\end{equation}
here and below $\Omega_a$ and $\Omega'_a$ denote the connected components of $\Omega\setminus\Gamma_{a}$ containing $a$ and $(cd)$, respectively.
Similarly, we construct a discrete cross-cut $\Gamma_{b}$ separating $b$ and $(cd)$ in $\Omega$ such that
\begin{equation}
\ep_0\zeta\leq\mathrm{Z}_{\Omega'_b}[\Gamma_b,(cd)]\leq\zeta,\label{bound Gamma2}
\end{equation}
and use the same notation $\Omega'_b$ and $\Omega_b$ for the corresponding connected components of $\Omega\setminus\Gamma_b$. Let $\Omega'':=\Omega'_a\cap\Omega'_b$. Note that $\Gamma_a$ and $\Gamma_b$ cannot intersect since
otherwise, $\Gamma_a\cup\Gamma_b$ would separate the whole boundary arc $(ab)$ from $(cd)$, which is impossible as
\[
\mathrm{Z}_{\Omega''}[\Gamma_{a}\cup\Gamma_{b},(cd)]\leq \mathrm{Z}_{\Omega'_a}[\Gamma_{a},(cd)] + \mathrm{Z}_{\Omega'_b}[\Gamma_{b},(cd)] \leq 2\zeta\le \tfrac{2}{3}\cdot \ZO[(ab),(cd)].
\]
We are thus facing the following topological picture: the two cross-cuts $\Gamma_{a}$ and $\Gamma_{b}$ do not intersect each other and separate $a$, $b$ and $(cd)$ in $\Omega$. Let $\Gamma_{\!(cd)}'':=\Gamma_a\cup\Gamma_b\cup ((ab)\cap \partial\Omega'')$.  The spatial Markov property and the monotonicity with respect to boundary conditions (simply wire the arcs $\Gamma_{a}\subset\partial\Omega_a$, $\Gamma_{b}\subset\partial\Omega_b$ and $\Gamma_{\!(cd)}''\subset\partial\Omega''$) enable us to apply the estimates (\ref{ab-cd-upper-bound}) and (\ref{a-cd-upper-bound}) to find
\[\begin{aligned}
\phi_{\Omega}^{(cd)}\big[\{a\leftrightarrow(cd)\}\cap\{b\leftrightarrow(cd)\}\big]~&\leq~\phi_{\Omega_{a}}^{\Gamma_{a}}[a\leftrightarrow\Gamma_{a}]\cdot \phi_{\Omega_{b}}^{\Gamma_{b}}[b\leftrightarrow\Gamma_{b}]\cdot \phi_{\Omega''}^{\Gamma_{\!(cd)}''\,,\,(cd)}[\Gamma_{\!(cd)}''\leftrightarrow(cd)]\\
&\lesssim \sqrt{\mathrm{Z}_{\Omega_a}[a,\Gamma_a]\cdot\mathrm{Z}_{\Omega_b}[b,\Gamma_b]\cdot\mathrm{Z}_{\Omega''}[\Gamma_{\!(cd)}''\,,(cd)]}.
\end{aligned}\]
It follows from the factorization property (\ref{eq:separators-Zab-factorization}) of separators and the bounds (\ref{bound Gamma1}), (\ref{bound Gamma2}) that
\[
\mathrm{Z}_{\Omega_a}[a,\Gamma_a]\asymp \frac{\ZO[a,(cd)]}{\mathrm{Z}_{\Omega'_a}[\Gamma_a,(cd)]}\asymp\frac{\ZO[a,(cd)]}{\ZO[(ab),(cd)]}\quad\text{and}\quad \mathrm{Z}_{\Omega_b}[b,\Gamma_b]\asymp\frac{\ZO[b,(cd)]}{\ZO[(ab),(cd)]}\,.
\]
Due to monotonicity of the RW partition functions with respect to the domain, we also have
\[
\mathrm{Z}_{\Omega''}[\Gamma_{\!(cd)}''\,,(cd)]\leq \mathrm{Z}_{\Omega_a}[\Gamma_a,(cd)]+\mathrm{Z}_{\Omega_b}[\Gamma_b,(cd)]+\ZO[(ab),(cd)]\leq \tfrac{5}{3}\cdot \ZO[(ab),(cd)].
\]
Putting everything together we arrive at (\ref{two-points bound}).
\end{proof}

\begin{prop}\label{prop:second-moment}
There exists an absolute constant $\zeta'_0>0$ such that, for all topological rectangles $(\Omega,a,b,c,d)$ with $\ZO[(ab),(cd)]\leq \zeta'_0$, one has
\[
\phi_\Omega^\emptyset [\mathbf{N}^2] \lesssim (\ZO[(ab),(cd)])^{\frac32},
\]
where the constant in $\lesssim$ does not depend on $(\Omega,a,b,c,d)$. 
\end{prop}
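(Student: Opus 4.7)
The starting point is the expansion
\[
\phi_\Omega^\emptyset[\mathbf{N}^2]\,=\,\sum_{u_1,u_2\in(ab)}\sum_{v_1,v_2\in(cd)} \phi_\Omega^\emptyset[u_1\leftrightarrow v_1]\,\phi_\Omega^\emptyset[u_2\leftrightarrow v_2]\,\phi_\Omega^\emptyset\big[\{u_1\leftrightarrow v_1\}\cap\{u_2\leftrightarrow v_2\}\big],
\]
and the plan is to bound the joint four-point connection probability using boundary-condition monotonicity together with Lemma~\ref{lem:two points}, to bound each ``weight'' factor $\phi_\Omega^\emptyset[u_i\leftrightarrow v_i]$ via the degenerate Dobrushin version of Proposition~\ref{prop:bound-cp-wfwf} recorded in Remark~\ref{rem:wfwf-collapsing}, and then to sum.

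For the joint term, by the obvious symmetry I may assume that $u_1$ precedes $u_2$ along $(ab)$ counterclockwise. Monotonicity in boundary conditions (wiring the arc $(cd)$) is an equality-preserving inclusion of the event, since under $(cd)$-wired conditions $\{u_i\leftrightarrow v_i\}\subseteq\{u_i\leftrightarrow(cd)\}$; hence
\[
\phi_\Omega^\emptyset\big[\{u_1\leftrightarrow v_1\}\cap\{u_2\leftrightarrow v_2\}\big]\,\le\, \phi_\Omega^{(cd)}\big[\{u_1\leftrightarrow(cd)\}\cap\{u_2\leftrightarrow(cd)\}\big].
\]
Choosing $\zeta_0'\le\zeta_0$ and using $\ZO[(u_1u_2),(cd)]\le\ZO[(ab),(cd)]\le\zeta_0'$ allows me to apply Lemma~\ref{lem:two points} to the sub-arc $(u_1u_2)\subseteq(ab)$, which gives
\[
\phi_\Omega^\emptyset\big[\{u_1\leftrightarrow v_1\}\cap\{u_2\leftrightarrow v_2\}\big]\,\lesssim\,\sqrt{\tfrac{\ZO[u_1,(cd)]\,\ZO[u_2,(cd)]}{\ZO[(u_1u_2),(cd)]}}.
\]
For the marginal factors, the same wiring argument combined with Remark~\ref{rem:wfwf-collapsing} yields the two symmetric bounds $\phi_\Omega^\emptyset[u\leftrightarrow v]\lesssim\sqrt{\ZO[u,(cd)]}$ and $\phi_\Omega^\emptyset[u\leftrightarrow v]\lesssim\sqrt{\ZO[v,(ab)]}$; the freedom to choose either bound on each of the two marginal factors will be essential to avoid spurious boundary-length factors.

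The main obstacle is the last step: plugging these estimates into the four-fold sum and controlling it by $(\ZO[(ab),(cd)])^{3/2}$. A naive substitution produces a product of square roots whose denominator $\ZO[(u_1u_2),(cd)]$ prevents direct decoupling of the $(u_1,u_2)$ sum from the $(v_1,v_2)$ sum, and choosing the wrong combination of marginal bounds creates uncontrolled sums of the form $\sum_v\sqrt{\ZO[v,(ab)]}$. To handle this I plan to perform a dyadic decomposition of $(ab)$ and $(cd)$ into pieces of comparable RW mass, using the separator construction of Corollary~\ref{cor:small-separator} to produce the partition, together with the boundary-arc factorization identity of Theorem~\ref{thm:factorization-triangle} to rewrite $\ZO[u_i,(cd)]$, $\ZO[v_j,(ab)]$, and $\ZO[(u_1u_2),(cd)]$ in terms of two-point harmonic measures on each scale. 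Pairing on each scale the marginal bound in $\sqrt{\ZO[v_j,(ab)]}$ on one factor and the two-point Lemma~\ref{lem:two points} bound on the joint term (or symmetrically), I expect the sum to telescope so that the $(v_1,v_2)$ sum produces a factor of order $\ZO[(ab),(cd)]$ and the $(u_1,u_2)$ sum produces a factor of order $(\ZO[(ab),(cd)])^{1/2}$, giving the claimed bound. The hypothesis $\ZO[(ab),(cd)]\le\zeta_0'$ plays a double role here: it ensures that Lemma~\ref{lem:two points} is applicable at every scale of the decomposition and that the number of scales entering the telescoping estimate is controlled by an absolute constant.
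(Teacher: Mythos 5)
Your proposal correctly identifies the building blocks (the expansion of $\mathbf{N}^2$, the bound on the joint connection event via Lemma~\ref{lem:two points}, and the one-point bound via Remark~\ref{rem:wfwf-collapsing}), and you have put your finger on the central difficulty. But the key structural step that the paper uses to resolve it is missing, and your proposed workaround is both unverified and, as sketched, does not deliver the right powers. The paper's mechanism is a \emph{single} cross-cut $\Gamma$, produced by Theorem~\ref{thm:existence-separators}(ii) with $k=1$, splitting $\Omega$ into $\Omega_{(ab)}$ and $\Omega_{(cd)}$ with $\mathrm{Z}_{\Omega_{(ab)}}[(ab),\Gamma]\asymp\mathrm{Z}_{\Omega_{(cd)}}[\Gamma,(cd)]\asymp\sqrt{\ZO[(ab),(cd)]}$. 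Wiring $\Gamma$ and using the spatial Markov property factors the four-point event into a product of a two-point event in $\Omega_{(ab)}$ (involving only $u,v\in(ab)$ and $\Gamma$) and a two-point event in $\Omega_{(cd)}$ (involving only $u',v'\in(cd)$ and $\Gamma$). This turns the four-fold sum into a product $S_{(ab)}\times S_{(cd)}$ of two two-fold sums, and on each side the one-point bound $\phi_{\Omega_{(ab)}}^{\Gamma}[u\leftrightarrow\Gamma]\lesssim\sqrt{\mathrm{Z}_{\Omega_{(ab)}}[u,\Gamma]}$ multiplied by the two-point bound produces precisely the integrand $x_ux_v/\sqrt{X_{uv}}$ to which the elementary Lemma~\ref{lem:technical} applies, giving $(\sqrt{\ZO[(ab),(cd)]})^{3/2}$ per side.

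Your version never decouples the $(cd)$-sum from the $(ab)$-sum. If you use $\phi_\Omega^\emptyset[u_i\leftrightarrow v_i]\lesssim\sqrt{\ZO[u_i,(cd)]}$ on both marginals, the $(v_1,v_2)$-sum is completely unconstrained and diverges like $|(cd)|^2$; if you use $\sqrt{\ZO[v_j,(ab)]}$ instead, you get a factor $\bigl(\sum_{v\in(cd)}\sqrt{\ZO[v,(ab)]}\bigr)^2$, which is not comparable to $\ZO[(ab),(cd)]$ (by Cauchy--Schwarz it is only bounded by $|(cd)|\cdot\ZO[(ab),(cd)]$). The two-point term you produce, $\sqrt{\ZO[u_1,(cd)]\ZO[u_2,(cd)]/\ZO[(u_1u_2),(cd)]}$, also does not match the form $x_{u_1}x_{u_2}/\sqrt{X_{u_1u_2}}$ required by Lemma~\ref{lem:technical}: without pairing against two matching one-point bounds on the \emph{same} variables you end up with a sum of terms each merely bounded by $1$, which gives nothing. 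The dyadic decomposition you invoke is not spelled out, and the claimed telescoping does not emerge from Theorem~\ref{thm:factorization-triangle} alone. You should instead introduce a separator at the outset and carry the argument through on each side of it.
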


\begin{proof} If $\zeta'_0$ is chosen small enough, Theorem~\ref{thm:existence-separators}(ii) applied to $\Omega$, $A=(ab)$, $B=(cd)$ and $k=1$ guarantees that there exists a discrete cross-cut $\Gamma$ splitting $\Omega$ into two subdomains $\Omega_{(ab)}$ and $\Omega_{(cd)}$ (see Section~\ref{sub:separators}) such that
\[
\mathrm{Z}_{\Omega_{(ab)}}[(ab),\Gamma] \asymp \mathrm{Z}_{\Omega_{(cd)}}[\Gamma,(cd)] \asymp \sqrt{\ZO[(ab),(cd)]}
\]
and both $\mathrm{Z}_{\Omega_{(ab)}}[(ab),\Gamma]$ and $\mathrm{Z}_{\Omega_{(cd)}}[\Gamma,(cd)]$ are less or equal to $\zeta_0$. Note that
\[
\phi_\Omega^\emptyset [\mathbf{N}^2] =\sum_{u,v\in (ab)}\sum_{u',v'\in (cd)} \phi_{\Omega}^{\emptyset}[u\leftrightarrow v]\,\phi_{\Omega}^{\emptyset}[u'\leftrightarrow v'] \, \phi_{\Omega}^{\emptyset}\big[\{u\leftrightarrow v\}\cap\{u'\leftrightarrow v'\}\big].
\]
Wiring both sides of the cross-cut $\Gamma$ and using the monotonicity of the FK-Ising model with respect to boundary conditions, we find
\[\begin{aligned}
\phi_\Omega^\emptyset [\mathbf{N}^2] & \leq \sum_{u,v\in (ab)}\phi_{\Omega_{(ab)}}^{\Gamma}[u\leftrightarrow \Gamma]\, \phi_{\Omega_{(ab)}}^{\Gamma}[v\leftrightarrow \Gamma]
\, \phi_{\Omega_{(ab)}}^{\Gamma}\big[\{u\leftrightarrow \Gamma\}\cap\{v\leftrightarrow \Gamma\}\big]\\
& \times \sum_{u',v'\in (cd)}\phi_{\Omega_{(cd)}}^{\Gamma}[u'\leftrightarrow \Gamma]\, \phi_{\Omega_{(cd)}}^{\Gamma}[v'\leftrightarrow \Gamma]
\, \phi_{\Omega_{(cd)}}^{\Gamma}\big[\{u'\leftrightarrow \Gamma\}\cap\{v'\leftrightarrow \Gamma\}\big]\\
&=: S_{(ab)}\times S_{(cd)}\,.
\end{aligned}\]
Applying the uniform estimates (\ref{a-cd-upper-bound}) and (\ref{two-points bound}) to each term of the sum $S_{(ab)}$ we get
\[
S_{(ab)}\lesssim \sum_{u,v\in (ab)} \frac{\mathrm{Z}_{\Omega_{(ab)}}[u,\Gamma]\cdot \mathrm{Z}_{\Omega_{(ab)}}[v,\Gamma]}{\sqrt{\mathrm{Z}_{\Omega_{(ab)}}[(uv),\Gamma]}}\,,
\]
where we assume that, independently of the order of $u$ and $v$ on $(ab)$, the boundary arc $(uv)$ is chosen so that $(uv)\subset (ab)$. Recall that $\mathrm{Z}_{\Omega_{(ab)}}[(uv),\Gamma]=\sum_{w\in (uv)}\mathrm{Z}_{\Omega_{(ab)}}[w,\Gamma]$ by definition. The simple technical Lemma~\ref{lem:technical} given below allows us to conclude that
\[
S_{(ab)}\lesssim (\mathrm{Z}_{\Omega_{(ab)}}[(ab),\Gamma])^{\frac 32} \asymp (\ZO[(ab),(cd)])^{\frac 34},
\]
with some absolute constants in $\lesssim$ and $\asymp$. Similarly, $S_{(cd)}\lesssim (\ZO[(ab),(cd)])^{\frac 34}$.
\end{proof}

\begin{lemma}\label{lem:technical} Let $x_1,\ldots,x_n>0$ and $X_{km}=X_{mk}:=\sum_{s=k}^m x_s$ for $1\leq k\leq m\leq n$. Then
\[
\sum_{k,m=1}^n \frac{x_kx_m}{X_{km}^{1/2}}\ \leq\ \frac{8}{3} \cdot \biggl[\sum_{s=1}^n x_s\biggr]^{\frac{3}{2}}.
\]
\end{lemma}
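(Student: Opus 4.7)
The plan is to realize the left-hand side as a Riemann-type comparison for the symmetric singular integral
\[
\int_0^T\!\!\int_0^T |t-s|^{-1/2}\,ds\,dt\ =\ \tfrac{8}{3}\,T^{3/2},\qquad T:=\sum_{s=1}^n x_s,
\]
the identity itself being a direct one-variable computation ($\int_0^T|t-s|^{-1/2}ds=2t^{1/2}+2(T-t)^{1/2}$, followed by a second integration). The constant $8/3$ of the lemma is exactly the constant of this integral, so no slack will be wasted.

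Introduce $S_0:=0$, $S_k:=x_1+\cdots+x_k$, and the disjoint intervals $I_k:=(S_{k-1},S_k]$ of length $x_k$, so that $[0,T]=\bigsqcup_{k=1}^n I_k$. The key pointwise bound is that, for $1\le k<m\le n$, every $s\in I_k$ and $t\in I_m$ satisfies $t-s\le S_m-S_{k-1}=X_{km}$, and therefore $(t-s)^{-1/2}\ge X_{km}^{-1/2}$. Integrating over $I_k\times I_m$ (of area $x_k x_m$) yields
\[
\frac{x_k\,x_m}{X_{km}^{1/2}}\ \le\ \int_{I_k}\!\!\int_{I_m}|t-s|^{-1/2}\,dt\,ds,
\]
and the symmetric bound holds for $k>m$. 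Summing over all $k\neq m$ and using the disjoint decomposition $[0,T]^2=\bigsqcup_{k,m}I_k\times I_m$,
\[
\sum_{k\neq m}\frac{x_k\,x_m}{X_{km}^{1/2}}\ \le\ \tfrac{8}{3}T^{3/2}\ -\ \sum_{k=1}^n\int_{I_k\times I_k}|t-s|^{-1/2}\,dt\,ds.
\]

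A direct computation in a square of side $x_k$ gives $\int_{I_k\times I_k}|t-s|^{-1/2}\,dt\,ds=\tfrac{8}{3}x_k^{3/2}$, while the diagonal of the discrete sum contributes $\frac{x_k^2}{X_{kk}^{1/2}}=x_k^{3/2}$. Combining,
\[
\sum_{k,m=1}^n\frac{x_k\,x_m}{X_{km}^{1/2}}\ \le\ \tfrac{8}{3}T^{3/2}-\tfrac{8}{3}\sum_k x_k^{3/2}+\sum_k x_k^{3/2}\ =\ \tfrac{8}{3}T^{3/2}-\tfrac{5}{3}\sum_k x_k^{3/2}\ \le\ \tfrac{8}{3}T^{3/2},
\]
which is the claim. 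There is really no hard step: the whole argument is a tight comparison of the discrete sum with the symmetric singular integral, the diagonal terms producing some negative slack rather than an obstruction.
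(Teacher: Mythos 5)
Your proof is correct and is essentially the same as the paper's: both compare the discrete sum term-by-term to the singular integral $\int_0^T\!\int_0^T|t-s|^{-1/2}\,ds\,dt=\frac{8}{3}T^{3/2}$ over the product decomposition $[0,T]^2=\bigsqcup_{k,m}I_k\times I_m$. The only (cosmetic) difference is that you handle the diagonal terms $k=m$ separately, whereas the paper applies the pointwise bound $|t-s|\le X_{km}$ uniformly to all pairs including $k=m$ (which also works, since $x_k^{3/2}\le\frac{8}{3}x_k^{3/2}$); your variant just makes the available slack $\frac{5}{3}\sum_k x_k^{3/2}$ explicit.
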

\begin{proof}
Let $t_0:=0$ and $t_k:=\sum_{s=1}^k x_k$ for $k=1,\ldots, n$. It is easy to see that
\[
\sum_{k,m=1}^n  \frac{x_mx_k}{X_{km}^{1/2}}\ \leq\ \sum_{k,m=1}^n \int_{t_{k-1}}^{t_k}\int_{t_{m-1}}^{t_m}\frac{dxdy}{|x\!-\!y|^{1/2}}\ =\ \int_0^{t_n}\int_0^{t_n}\frac{dxdy}{|x\!-\!y|^{1/2}}\,.
\]
The last integral is equal to $\frac{8}{3}\cdot t_n^2$ which gives the result.
\end{proof}

\subsection{\label{sub:proof-main-thm}Proof of Theorem~\ref{thm:main-thm}}

\begin{proof}[Proof of Theorem~\ref{strong RSW}(i)] Due to the monotonicity of crossing probabilities with respect to boundary conditions (see Remark~\ref{rem:monotonicity}), it is sufficient to consider the case $\xi=\emptyset$. Moreover, without loss of generality we may assume that $\Omega$ contains no bridges. Indeed, for the critical FK-Ising model on $\Omega$ with free boundary conditions, all bridges are open independently of each other with the fixed rate $p_\mathrm{bridge}=1-p_\mathrm{crit}$, and the critical FK-Ising models on the remaining components are mutually independent. Therefore, one can remove from $\Omega$ all bridges that do not separate $(ab)$ and $(cd)$ together with the components behind them: neither $\ell_\Omega[(ab),(cd)]$ nor $\phi^\emptyset_\Omega[(ab)\leftrightarrow (cd)]$ changes. Further, it is easy to see that the number of remaining bridges (separating $(ab)$ and $(cd)$) is bounded by $\ell_\Omega[(ab),(cd)]\leq L$. Thus, if we have the desired lower bound for the crossing probabilities in all remaining components (the corresponding extremal lengths there are smaller than $L$ too), then $\phi_\Omega^\emptyset[(ab)\leftrightarrow(cd)]\geq [p_\mathrm{bridge}\cdot \eta(L)]^L$ and we are done.

\smallskip

Further, for each fixed $L_0>0$ we may assume that either $\ell_\Omega[a,c]\leq L_0$ or
\begin{equation}
\label{ell L-0 bound}
L_0\leq\ell_\Omega[(ab),(cd)]\leq \max\{L,L_0\!+\!c_0\}
\end{equation}
provided that an absolute constant $c_0>0$ is chosen large enough. Indeed, let \mbox{$\ell_\Omega[a,c]>L_0$} while $\ell_\Omega[(ab),(cd)]<L_0$. Then one can shrink the boundary arcs step by step (thus increasing the extremal length), arriving at smaller arcs $(ab')$ and $(cd')$ such that $L_0\leq\ell_\Omega[(ab'),(cd')]\leq L_0+c_0$ (note that the increment of $\ell_\Omega$ on each step is uniformly bounded). Clearly, $\phi_\Omega^\emptyset [(ab')\leftrightarrow (cd')]\leq \phi_\Omega^\emptyset [(ab)\leftrightarrow (cd)]$, thus it is enough to prove the uniform lower bound for the first event.

\smallskip

Assume that (\ref{ell L-0 bound}) holds true. If $L_0=L_0(\zeta'_0)$ is chosen large enough, then Theorem~\ref{thm:ell-Z-bounds} and the lower bound in (\ref{ell L-0 bound}) yield $\ZO[(ab),(cd)]\leq\zeta'_0$, hence we can apply Proposition~\ref{prop:second-moment}. The Cauchy-Schwarz inequality and Corollary~\ref{cor:first-moment} give
\[
\phi^\emptyset_\Omega[(ab)\leftrightarrow(cd)]~=~\phi^\emptyset_\Omega[\mathbf{N}\!>\!0]~\geq~ \frac{\phi^\emptyset_\Omega[\mathbf{N}]^2}{\phi^\emptyset_\Omega[\mathbf{N}^2]}~\gtrsim~ \frac{(\mathrm{Z}_{\Omega^*_\mathrm{int}}[(a^*b^*),(c^*d^*)])^2}{\zeta_0'^{\,3/2}}.
\]
Recall that $\Omega$ contains no bridges, thus $\Omega^*_\mathrm{int}$ is connected. The upper bound in (\ref{ell L-0 bound}) and Theorem~\ref{thm:ell-Z-bounds} imply the lower bound for the right-hand side which depends on $L$ only.

The case $\ell_\Omega[a,c]\leq L_0$ is much simpler: again, due to Theorem~\ref{thm:ell-Z-bounds}, $\mathrm{Z}_{\Omega^*_\mathrm{int}}[a^*,c^*]$ is uniformly bounded from below and the result follows from Corollary~\ref{cor:first-moment}.
\end{proof}

\begin{proof}[Proof of Theorem~\ref{thm:main-thm}(ii)] Due to Remark~\ref{rem:monotonicity}, it is sufficient to consider the fully wired boundary conditions $\xi=\partial\Omega$. Again, we may assume that $\Omega$ contains no bridges: if all bridges not separating $(ab)$ and $(cd)$ are wired together, the crossing probability increases, and if there is a bridge separating $(ab)$ and $(cd)$, then this bridge is closed with probability $p_\mathrm{crit}$, thus yielding $\phi_\Omega^{\partial\Omega}[(ab)\leftrightarrow (cd)]\le p_\mathrm{crit}$.

As before, let $b^*$ and $d^*$ be the boundary faces lying to the left of the external edges $bb_\mathrm{ext}$ and $dd_\mathrm{ext}$, and $a^*_{r},c^*_{r}$ denote the boundary faces lying to the \emph{right} of $aa_\mathrm{ext}$, $cc_\mathrm{ext}$. The planar self-duality described in Section~\ref{sub:planar-duality} implies
\[
1-\phi_\Omega^{\partial\Omega}[(ab)\leftrightarrow(cd)]=\phi_{\Omega^*}^\emptyset[(b^*c^*_r)\leftrightarrow(d^*a^*_r)]\asymp \phi_{\Omega^*}^\emptyset[(b^*c^*)\leftrightarrow(d^*a^*)].
\]
Therefore, the result follows from Theorem~\ref{thm:main-thm}(i) and the uniform estimates from Section~\ref{sub:discrete-extremal-length}:
\[
\ell_{\Omega^*}[(b^*c^*),(d^*a^*)]\asymp \ell_{\overline\Omega}[(b_\mathrm{ext}c_\mathrm{ext}),(d_\mathrm{ext}a_\mathrm{ext})]\lesssim (\ell_\Omega [(ab),(cd)])^{-1}.\qedhere
\]
\end{proof}


\section{Applications}\label{sec:arm exponents}
\setcounter{equation}{0}

Before starting, let us mention that we will only sketch the proofs in order to highlight places which require Theorem~\ref{strong RSW}. We refer to \cite{Nol08} for complete modern proofs of the results of Sections~\ref{sub:well-separated-arms},~\ref{sub:arm-exponents} in the case of Bernoulli percolation.

\subsection{Well-separated arm events}\label{sub:well-separated-arms}

Define $\Lambda_n(x)~:=~x+[-n,n]^2$ and let $\Lambda_n=\Lambda_n(0)$. 

We begin with two classical applications of Theorem~\ref{strong RSW} (in fact, the weaker version of Theorem~\ref{strong RSW} for standard rectangles is sufficient here). The first proposition can be proved in the same way as for Bernoulli percolation, while the second is proved in \cite{DHN11}.

 \begin{proposition}\label{a priori}
For each sequence $\sigma$, there exist $\beta_\sigma,\beta'_\sigma\in (0,1)$ such that, for any $n<N$,
$$(n/N)^{\beta_\sigma}\le\phi_{\mathbb Z^2}[A_\sigma(n,N)]\le (n/N)^{\beta'_\sigma}.$$
\end{proposition}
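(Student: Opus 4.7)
The plan is to decompose the annulus $\Lambda_N\setminus\Lambda_n$ into $\log_2(N/n)+O(1)$ dyadic sub-annuli $\mathcal{A}_k:=\Lambda_{2^{k+1}}\setminus\Lambda_{2^k}$ and to control, uniformly in the outside configuration, the probability of the prescribed arm pattern in each $\mathcal{A}_k$ via Theorem~\ref{strong RSW}; the estimates then chain through the domain Markov property and the stochastic monotonicity in boundary conditions. The upper and lower bounds are handled separately.

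For the upper bound, observe that $A_\sigma(n,N)\subset A_{\sigma_1}(n,N)$ for any symbol $\sigma_1$ appearing in $\sigma$, so it is enough to bound the one-arm probability; say $\sigma_1=1$, the dual case being identical by planar self-duality. Each $\mathcal{A}_k$ is covered by four ``side strips''---rectangles of bounded aspect ratio whose discrete extremal length in the short direction is uniformly bounded. By Theorem~\ref{strong RSW}(i) applied in the dual FK-Ising model with worst-case (primal-wired, i.e.\ dual-free) boundary conditions, together with the FKG inequality for dual-open edges, a dual-open circuit around $\Lambda_{2^k}$ exists inside $\mathcal{A}_k$ with probability at least some absolute constant $c_0>0$. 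Such a circuit blocks any primal open arm. Iterating from outside inward through the domain Markov property and Remark~\ref{rem:monotonicity} multiplies a factor of $1-c_0$ per sub-annulus, so
\[
\phi_{\mathbb{Z}^2}\bigl[A_\sigma(n,N)\bigr]\le\phi_{\mathbb{Z}^2}\bigl[A_1(n,N)\bigr]\lesssim(1-c_0)^{\log_2(N/n)}\lesssim(n/N)^{\beta'_\sigma},
\]
with $\beta'_\sigma:=-\log_2(1-c_0)>0$ (replacing by a smaller positive value if needed to put $\beta'_\sigma$ in $(0,1)$).

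For the lower bound, let $j:=|\sigma|$ and partition each $\mathcal{A}_k$ into $j$ angular sectors $S_k^{(1)},\ldots,S_k^{(j)}$ listed counterclockwise. In each $S_k^{(i)}$ fix a ``radial'' topological rectangle $R_k^{(i)}$ of bounded aspect ratio joining $\partial\Lambda_{2^k}$ to $\partial\Lambda_{2^{k+1}}$, together with short ``connector'' rectangles overlapping $\partial\Lambda_{2^{k}}$ and $\partial\Lambda_{2^{k+1}}$ which will force the arms produced in $\mathcal{A}_k$ and $\mathcal{A}_{k\pm 1}$ to land in compatible angular intervals. All these rectangles can be chosen pairwise disjoint (across $i$ within a fixed $k$). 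The target event within $\mathcal{A}_k$ is that $R_k^{(i)}$ and its connectors carry a crossing of type $\sigma_i$---primal open if $\sigma_i=1$, dual open if $\sigma_i=0$---for every $i$. Conditioning on the FK configuration outside $\bigsqcup_i R_k^{(i)}$, the conditional distributions within the disjoint rectangles factorize; by Theorem~\ref{strong RSW}(i) applied in the primal (resp.\ dual) model with the arbitrary inherited boundary conditions, each factor is $\ge\eta_1>0$ uniformly. Hence the full event in $\mathcal{A}_k$ has conditional probability $\ge\eta_1^{O(j)}=:\eta_2>0$ given any outcome on the complement of $\mathcal{A}_k$. Chaining through the $\log_2(N/n)+O(1)$ sub-annuli via the domain Markov property gives
\[
\phi_{\mathbb{Z}^2}\bigl[A_\sigma(n,N)\bigr]\gtrsim\eta_2^{\log_2(N/n)}\gtrsim(n/N)^{\beta_\sigma}
\]
for some $\beta_\sigma:=-\log_2\eta_2>0$.

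The main obstacle is the lower bound, specifically the joint production of primal and dual arms inside one sub-annulus: the FKG inequality delivers positive correlation only within the primal or within the dual family, not across them. The resolution is \emph{spatial} decoupling---placing arms of different types in disjoint sub-rectangles and conditioning on the outside to make the events in different rectangles independent. This reduces the construction to a family of single-type crossings in topological rectangles whose boundary, inherited from the revealed random configuration, is arbitrary and possibly very rough. It is exactly for such rough topological rectangles with arbitrary boundary conditions that Theorem~\ref{strong RSW} is needed; the box-RSW bounds of \cite{DHN11}, restricted to standard rectangles, would not suffice in this step.
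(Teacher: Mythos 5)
Your argument is the standard dyadic-annulus construction used for Bernoulli percolation, and it is exactly the approach the paper alludes to when it says the proposition ``can be proved in the same way as for Bernoulli percolation.'' Both bounds chain uniform estimates through $\log_2(N/n)$ concentric sub-annuli via the domain Markov property and monotonicity in boundary conditions; that structure is correct. Two points, however, should be corrected.

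First, the step where you assert that, after conditioning on the configuration outside $\bigsqcup_i R_k^{(i)}$, ``the conditional distributions within the disjoint rectangles factorize'' is false for the FK model. The inherited boundary partition may wire vertices of different $R_k^{(i)}$'s together (they are all connected through the revealed outside configuration), and then the FK measure on a disjoint union is not a product: clusters meeting the boundary of two different components can be merged through the external wiring, so $k(\omega,\xi)$ is not additive over components. FKG would give the product as a lower bound if all the target crossings in $\mathcal{A}_k$ had the same type, but once a sub-annulus must carry both primal and dual arms FKG cannot combine the two families. The correct repair, which you should make explicit, is to reveal the rectangles one at a time and, at each step, invoke the crossing estimate uniformly over the arbitrary inherited boundary conditions on $\partial R_k^{(i)}$; the uniformity in $\xi$ replaces the missing product structure.

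Second, your closing claim that Theorem~\ref{strong RSW} in its full topological-rectangle generality is indispensable here because the rectangles have ``rough boundary inherited from the revealed configuration'' is incorrect for this particular proposition. The $R_k^{(i)}$ are fixed, deterministic sub-rectangles of the dyadic annulus; after conditioning, only the boundary \emph{conditions} on $\partial R_k^{(i)}$, not the domains themselves, depend on the revealed configuration. Uniform crossing bounds for fixed rectangles with arbitrary boundary conditions are already provided by \cite[Theorem~1]{DHN11}, and the paper states at the start of Section~5.1 that for Proposition~\ref{a priori} ``the weaker version of Theorem~\ref{strong RSW} for standard rectangles is sufficient here.'' The rough-domain strength of Theorem~\ref{strong RSW} is genuinely needed elsewhere in Section~5 (e.g.\ Lemmas~\ref{lem:T arms} and \ref{separation}, the five-arm lower bound in Corollary~\ref{universal exponent}, and Corollary~\ref{cor:spin-ising-crossing-bounds}), but not in this a priori estimate.
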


\begin{proposition}[{\cite[Proposition~5.11]{DHN11}}]\label{mixing}
There exist $c,\alpha>0$ such that
\begin{equation*}
\big|\phi_{\mathbb Z^2}[A\cap B]-\phi_{\mathbb Z^2}[A]\phi_{\mathbb Z^2}[B]\big|\leq c\left(n/N\right)^\alpha\phi_{\mathbb Z^2}[A]\phi_{\mathbb Z^2}[B]
\end{equation*}
for any $n\leq N$ and for any event $A$ (respectively $B$) depending only on the edges in the box~$\Lambda_n$ (respectively outside $\Lambda_{2N}$).
\end{proposition}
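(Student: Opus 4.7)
I would prove this by the standard ``decorrelation via shielding circuits'' scheme, powered by Theorem~\ref{strong RSW} and the domain Markov property. Since $B$ is measurable with respect to the edges outside $\Lambda_{2N}$, conditioning on this configuration and applying domain Markov gives
\[
\phi_{\mathbb Z^2}[A\cap B]\ =\ \phi_{\mathbb Z^2}\big[\mathbf 1_B\cdot \phi^{\xi(\omega)}_{\Lambda_{2N}}[A]\big],
\]
where $\xi(\omega)$ denotes the (random) boundary condition on $\partial\Lambda_{2N}$ induced by the outside configuration. The problem thus reduces to establishing the uniform estimate
\[
\big|\phi^\xi_{\Lambda_{2N}}[A]-\phi_{\mathbb Z^2}[A]\big|\ \lesssim\ (n/N)^\alpha\,\phi_{\mathbb Z^2}[A]
\]
for every boundary condition $\xi$ on $\partial\Lambda_{2N}$ and every event $A$ depending only on the edges of $\Lambda_n$.

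The core tool will be shielding circuits in the annulus $\Lambda_N\setminus\Lambda_n$. I would partition this annulus into $K=\lfloor\log_2(N/n)\rfloor-1$ dyadic sub-annuli $\Lambda_{2^{k+1}n}\setminus\Lambda_{2^kn}$. For each one, Theorem~\ref{strong RSW}(i) applied to four overlapping topological rectangles covering the sub-annulus, glued via the FKG inequality, gives a uniform lower bound $c_0>0$ on the probability that an open circuit surrounds $\Lambda_n$ inside that sub-annulus, \emph{uniform in the boundary conditions on the outer side}. Revealing edges from outside inwards and applying the domain Markov property at each step, the $K$ circuit events can be treated as conditionally independent, so that the event $\mathcal C$ that \emph{some} sub-annulus contains such a circuit satisfies $\phi^\xi_{\Lambda_{2N}}[\mathcal C^c]\le(1-c_0)^K\lesssim (n/N)^{\alpha}$, uniformly in $\xi$. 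By duality (Theorem~\ref{strong RSW}(ii) applied on the dual lattice, cf.~Section~\ref{sub:planar-duality}), the analogous event $\mathcal C^*$ of a dual-open shielding circuit satisfies the same bound.

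On $\mathcal C$, I would consider the outermost open circuit $\Gamma\subset\Lambda_N\setminus\Lambda_n$, obtained by an exploration from outside; in particular $\Gamma$ is measurable with respect to the configuration outside $\mathrm{Int}(\Gamma)$. By the domain Markov property, conditionally on $\Gamma=\gamma$ and on the configuration outside $\gamma$, the measure in $\mathrm{Int}(\gamma)$ is the critical FK-Ising measure with wired boundary conditions on $\gamma$, which \emph{does not depend on $\xi$}:
\[
\phi^\xi_{\Lambda_{2N}}[A\cap\mathcal C]\ =\ \sum_\gamma p^\xi(\gamma)\,\phi^{\mathrm{wired}}_{\mathrm{Int}(\gamma)}[A].
\]
The remaining task is to compare $\phi^{\mathrm{wired}}_{\mathrm{Int}(\gamma)}[A]$ with $\phi_{\mathbb Z^2}[A]$, uniformly over the admissible circuits $\gamma\subset\Lambda_N\setminus\Lambda_n$. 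I would run the same construction in reverse inside $\mathrm{Int}(\gamma)$ using the dual circuit event $\mathcal C^*$ on a smaller intermediate scale, then alternate between primal and dual shielding circuits on a few intermediate dyadic scales, yielding the desired ratio $\phi^{\mathrm{wired}}_{\mathrm{Int}(\gamma)}[A]=\phi_{\mathbb Z^2}[A]\bigl(1+O((n/N)^\alpha)\bigr)$.

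\textbf{Main obstacle.} The hardest step will be obtaining the \emph{multiplicative} form of the error (proportional to $\phi_{\mathbb Z^2}[A]$ rather than an additive $(n/N)^\alpha$). For general non-increasing events $A$ one cannot simply invoke FKG; one must control $\phi^{\mathrm{wired}}_{\mathrm{Int}(\gamma)}[A\,|\,\mathcal C^c]\lesssim \phi_{\mathbb Z^2}[A]$, which requires uniform crossing bounds in the highly irregular random domains $\mathrm{Int}(\gamma)$ cut out by the exploration. This is precisely where Theorem~\ref{strong RSW} (as opposed to RSW for standard rectangles only) is essential: the arcs of $\partial \mathrm{Int}(\gamma)$ can have arbitrary discrete geometry, and only uniform topological-rectangle bounds give the necessary control along such circuits.
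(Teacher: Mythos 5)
The paper does not prove this proposition at all: it is imported verbatim from \cite[Proposition~5.11]{DHN11}, so there is no internal proof to compare against. Your outline follows the standard shielding--circuit scheme that is indeed the engine of the proof in \cite{DHN11}: domain Markov to reduce to a uniform-in-$\xi$ estimate, dyadic annuli with RSW-type circuit bounds, and exploration of the outermost circuit to decouple the interior. That part is sound.

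There is, however, a genuine gap in the final step, and it is not cured by ``alternating primal and dual shielding circuits.'' You claim a uniform comparison $\phi^{\mathrm{wired}}_{\mathrm{Int}(\gamma)}[A]=\phi_{\mathbb Z^2}[A]\bigl(1+O((n/N)^\alpha)\bigr)$ for \emph{all} admissible circuits $\gamma\subset\Lambda_N\setminus\Lambda_n$. This cannot hold when the outermost open circuit happens to be near $\Lambda_n$: if $\gamma\subset\Lambda_{2n}\setminus\Lambda_n$, there is simply no separation of scales, and $\phi^{\mathrm{wired}}_{\mathrm{Int}(\gamma)}[A]$ will generically differ from $\phi_{\mathbb Z^2}[A]$ by a bounded constant factor, not by $1+O((n/N)^\alpha)$. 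So the decomposition $\phi^\xi[A\cap\mathcal C]=\sum_\gamma p^\xi(\gamma)\,\phi^{\mathrm{wired}}_{\mathrm{Int}(\gamma)}[A]$ does not immediately give what you want: since the weights $p^\xi(\gamma)$ depend on $\xi$, you need the summands to be nearly $\gamma$-independent, and that fails near the inner boundary.

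Two standard repairs exist, and \cite{DHN11} uses a version of them. First, establish separately the cruder uniform ratio bound $\phi^\xi_{\Lambda_{2N}}[A]\asymp\phi^{\xi'}_{\Lambda_{2N}}[A]$ with constants independent of $n,N$ (the fact recorded right after Proposition~\ref{mixing} in the paper, equation~\eqref{mixing2}); this is done by forcing a fixed nested primal/dual circuit pattern at a \emph{single} scale and paying only a multiplicative constant. Second, compare $\phi^\xi$ to $\phi^{\xi'}$ directly via a monotone coupling explored from the outside in, and use the circuit events only to certify that the two coupled configurations have \emph{coalesced} before reaching $\Lambda_n$; the failure probability of coalescence is $\lesssim(n/N)^\alpha$, and on the failure event you invoke the crude ratio bound to control the contribution of $A$ multiplicatively. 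This avoids ever needing a sharp estimate for circuits close to $\Lambda_n$. You correctly flag the multiplicative-error issue as ``the main obstacle,'' but as written the proposal neither states the crude ratio bound as a separate lemma nor replaces the per-circuit comparison by a coupling, so the argument does not close.
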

We will also use the following fact, see the proof of \cite[Proposition~5.11]{DHN11}: up to uniform constants, the probability of any event $A$ depending only on the edges in the box~$\Lambda_N$ is independent of boundary conditions on $\partial\Lambda_{2N}$. In particular,
\begin{equation}\label{mixing2}\phi_{\mathbb Z^2}\big[A_{\sigma}(n,N)\,\big|\,\mathcal F_{\mathbb Z^2\setminus\Lambda_{2N}}\big]~\asymp~\phi_{\mathbb Z^2}[A_{\sigma}(n,N)]\quad\text{a.s.}\end{equation}
uniformly in $n$, $N$, where $\mathcal F_\Omega$ is the $\sigma$-algebra generated by (the state of) the edges in $\Omega$.
\medbreak
Let us now define the notion of {\em well-separated arms}. We refer to
Fig.~\ref{fig:arm_events} for an illustration. In what is next, let $x_k$ and $y_k$ be the endpoints\footnote{Since an arm is self-avoiding, $x_k$ and $y_k$ are uniquely defined. Furthermore, $x_k$ and $y_k$ are on the primal graph if the path is of type 1, and on the dual graph it is of type 0.} of the arm $\gamma_k$ on the inner and outer boundary respectively.
For $\delta>0$, the arms $\gamma_1,\dots,\gamma_j$ are said to be \emph{$\delta$-well-separated} if
\begin{itemize}
\item points $y_k$ are at distance larger than $2\delta N$ from each others;
\item points $x_k$ are at distance larger than $2\delta n$ from each others;
\item for every $k$, $y_k$ is $\sigma_k$-connected to distance $\delta
  N$ of $\partial\Lambda_N$ in $\Lambda_{\delta N}(y_k)$;
\item for every $k$, $x_k$ is $\sigma_k$-connected to distance $\delta
  n$ of $\partial\Lambda_n$ in $\Lambda_{\delta n}(x_k)$.
\end{itemize}
Let $A_{\sigma}^{\rm sep}(n,N)$ be the event
that $A_{\sigma}(n,N)$ occurs and there exist arms realizing
$A_{\sigma}(n,N)$ which are $\delta$-well-separated. Note that while the notation does not suggest it, this event depends on $\delta$.
 The previous definition has several convenient properties.

\begin{figure}
\begin{center}\includegraphics[width=0.44\textwidth]{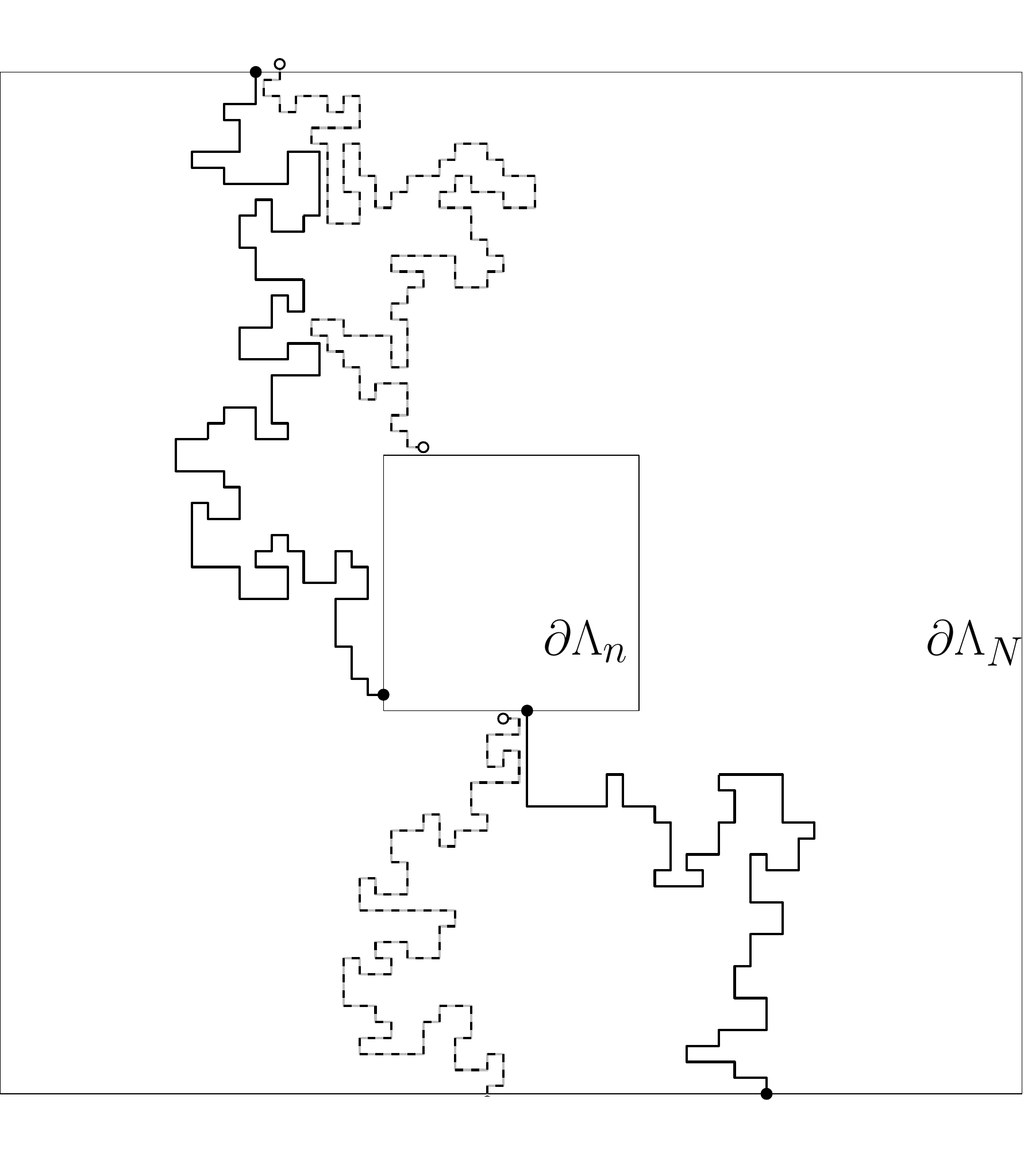}\hskip 0.07\textwidth\includegraphics[width=0.44\textwidth]{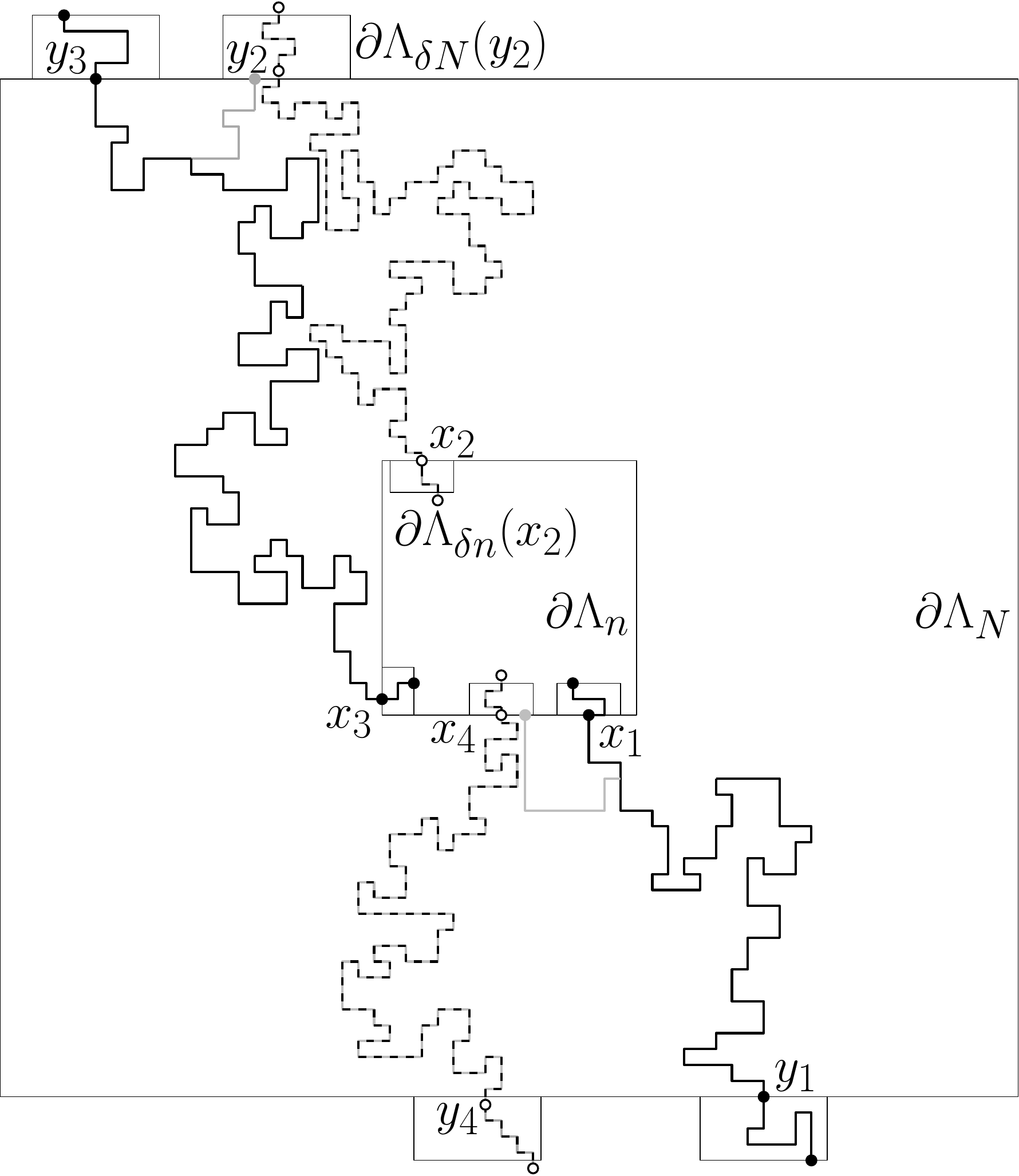}\end{center}
\caption{\label{fig:arm_events}On the left, the four-arm event $A_{1010}(n,N)$. On the right, the five-arm event $A_{1010}^{\rm sep}(n,N)$ with well-separated arms. Note that these arms are not at macroscopic distance of each other inside the domain, but only at their endpoints.}
\end{figure}

\begin{lemma}\label{extend}
  Fix a sequence $\sigma$ and let $\delta$ be small enough. For any $n_1<\frac{n_2}{2}$,
  $$\phi_{\mathbb Z^2}[A_{\sigma}^{\rm sep}(n_1,n_2)] \ges \phi_{\mathbb Z^2} [ A_{\sigma}^{\rm sep}
  (2n_1,n_2)]\,,$$
  where the constant in $\ges$ depends on $\sigma$ and $\delta$ only.
\end{lemma}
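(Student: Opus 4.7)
\medskip

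\noindent\textbf{Proof proposal for Lemma~\ref{extend}.}
The plan is a standard ``extension of well-separated arms inward'' argument; its only nonclassical input is Theorem~\ref{strong RSW}, which lets us treat the highly arbitrary boundary conditions that arise on $\partial\Lambda_{2n_1}$ after conditioning on the outside. First I would condition on $\mathcal{F}_{\mathbb{Z}^2\setminus\Lambda_{2n_1}}$ and restrict to the event $A_\sigma^{\rm sep}(2n_1,n_2)$. On this event, by definition of well-separation, the inner endpoints $x_1,\dots,x_j$ of the outer arms on $\partial\Lambda_{2n_1}$ are pairwise at Euclidean distance at least $4\delta n_1$, and each $x_k$ is $\sigma_k$-connected in $\Lambda_{2\delta n_1}(x_k)$ to distance $2\delta n_1$ of $\partial\Lambda_{2n_1}$.

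Next I would fix, once and for all, $j$ target points $y_1,\dots,y_j\in\partial\Lambda_{n_1}$ in counterclockwise order, pairwise at distance at least $2\delta n_1$ from one another and such that $y_k$ lies roughly ``below'' $x_k$ in the annulus $\Lambda_{2n_1}\setminus\Lambda_{n_1}$. I would then choose, for each $k$, a corridor $C_k\subset\Lambda_{2n_1}\setminus\Lambda_{n_1}$ made of a bounded (depending on $\delta$ only) chain of overlapping topological rectangles linking $\Lambda_{2\delta n_1}(x_k)$ to $\Lambda_{\delta n_1}(y_k)$. The $\delta$-separation of the $x_k$'s and $y_k$'s allows one to choose the corridors $C_1,\dots,C_j$ pairwise disjoint, and with all extremal lengths of all constituent rectangles bounded above by a constant $L_0=L_0(\delta)$. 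Inside each corridor I would prescribe the event $\mathcal{E}_k$ that every rectangle of the chain is crossed in the long direction by a path of type $\sigma_k$, the first crossing being glued to the existing $\sigma_k$-landing zone around $x_k$, the last one producing a $\sigma_k$-connection from $y_k$ to distance $\delta n_1$ of $\partial\Lambda_{n_1}$ inside $\Lambda_{\delta n_1}(y_k)$.

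On the intersection of $A_\sigma^{\rm sep}(2n_1,n_2)$ with $\bigcap_k \mathcal{E}_k$, one obtains $j$ disjoint arms realizing $A_\sigma(n_1,n_2)$ whose inner endpoints are the $y_k$'s, and these arms are $\delta$-well-separated by construction; hence $A_\sigma^{\rm sep}(n_1,n_2)$ holds. It remains to produce a uniform lower bound on $\phi_{\mathbb{Z}^2}\big[\bigcap_k\mathcal{E}_k \,\big|\, \mathcal{F}_{\mathbb{Z}^2\setminus\Lambda_{2n_1}}\big]$ on the conditioning event. This is exactly where Theorem~\ref{strong RSW} enters: I would treat the corridors one at a time using the spatial Markov property; for each rectangle inside $C_k$ the crossing (or dual crossing, if $\sigma_k=0$) event has probability at least $\eta(L_0)$ thanks to Theorem~\ref{strong RSW} applied uniformly over \emph{arbitrary} boundary conditions induced both by the conditioning outside $\Lambda_{2n_1}$ and by already-revealed edges in the other corridors and in previously-treated rectangles of $C_k$. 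Taking a product of $j$ times (number of rectangles per corridor) such uniform bounds yields a constant $c(\sigma,\delta)>0$, and the lemma follows.

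The only genuine difficulty is the last step: the boundary conditions on $\partial\Lambda_{2n_1}$ produced by conditioning on the outer configuration are completely uncontrolled, and so are the conditions imposed in each new rectangle by the already-revealed edges. Classical RSW bounds in ``standard'' rectangles such as those of \cite{DHN11} are not sufficient, because after conditioning the relevant domains may be rough and the wiring along $\partial\Lambda_{2n_1}$ arbitrary. The full strength of Theorem~\ref{strong RSW} — a crossing estimate in arbitrary topological rectangles uniform in the boundary conditions — is what makes the whole scheme work.
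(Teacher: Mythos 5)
Your proposal is correct and follows essentially the same route as the paper: condition on $A_{\sigma}^{\rm sep}(2n_1,n_2)$, build disjoint corridors of bounded extremal length through the annulus $\Lambda_{2n_1}\setminus\Lambda_{n_1}$ linking the inner endpoints $x_k$ to targets $\tilde x_k\in\partial\Lambda_{n_1}$, and invoke Theorem~\ref{strong RSW} (uniform in boundary conditions) to cross each rectangle in turn. One small wording slip: the targets $y_k$ cannot be fixed ``once and for all'' independently of the configuration since the $x_k$'s are random; they must be chosen as a measurable function of the $x_k$'s (as the paper's $\tilde x_k$ are), but your subsequent clause makes the intended dependence clear, so this does not affect the substance of the argument.
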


\begin{proof}
Condition on $A_{\sigma}^{\rm sep}(2n_1,n_2)$ and construct $j$ disjoint tubes of width
  $\ep=\ep(\delta)$ connecting $\Lambda_{2\delta n_1}(x_k)\setminus
  \Lambda_{2n_1}$ to disjoint boxed $\partial \Lambda_{\delta n_1}(\tilde x_k)$ for
  every $k\le j$, where $\tilde x_k\in \partial\Lambda_{n_1}$. It easily follows from topological considerations that this is possible whenever $\delta$ is small enough. Via Theorem~\ref{strong RSW}, the
  $\sigma_k$-paths connecting $x_k$ to $\partial \Lambda_{2\delta
    n_1}(x_k)\cap \Lambda_{2n_1}$ to $\partial \Lambda_{n_2}$ can be extended to connect to $\partial\Lambda_{n_1}$ while staying in tubes with positive probability
  $c=c(\sigma,\delta)$.
\end{proof}

\begin{proposition}\label{quasi_sep}
  Fix a sequence $\sigma$ and let $\delta$ be small enough. For any $n_1< n_2< \frac{n_3}{2}$,
  $$\phi_{\mathbb Z^2}[A_{\sigma}^{\rm sep}(n_1,n_3)] \ges \phi _{\mathbb Z^2}[ A_{\sigma}^{\rm sep}
  (n_1,n_2)]\cdot\phi_{\mathbb Z^2}[A_{\sigma}^{\rm sep}(n_2,n_3)]\,,$$
    where the constant in $\ges$ depends on $\sigma$ and $\delta$ only.
\end{proposition}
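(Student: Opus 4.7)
The plan is a Kesten-style gluing argument: I will exhibit a subevent of $A^{\rm sep}_\sigma(n_1,n_3)$ as the intersection of three events supported on edge-disjoint concentric regions, namely an inner event in $\Lambda_{n_2}$ comparable to $A^{\rm sep}_\sigma(n_1,n_2)$, a gluing event in the annulus $\Lambda_{3n_2}\setminus \Lambda_{n_2}$, and an outer event on $\mathbb{Z}^2\setminus \Lambda_{3n_2}$ comparable to $A^{\rm sep}_\sigma(n_2,n_3)$. First, by iterating Lemma~\ref{extend} and its outer-radius analog (whose proof is the symmetric image of the one given, again based on Theorem~\ref{thm:main-thm}), one has $\phi_{\mathbb{Z}^2}[A^{\rm sep}_\sigma(n_2,n_3)] \asymp \phi_{\mathbb{Z}^2}[A^{\rm sep}_\sigma(3n_2,n_3)]$, so it suffices to prove
\[
\phi_{\mathbb{Z}^2}[A^{\rm sep}_\sigma(n_1,n_3)] \gtrsim \phi_{\mathbb{Z}^2}[A^{\rm sep}_\sigma(n_1,n_2)]\cdot \phi_{\mathbb{Z}^2}[A^{\rm sep}_\sigma(3n_2,n_3)],
\]
with the second factor now measurable with respect to $\mathcal{E}(\mathbb{Z}^2\setminus\Lambda_{3n_2})$.

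Next, I would set up the gluing event. Partition each of $\partial\Lambda_{n_2}$ and $\partial\Lambda_{3n_2}$ into $O_\delta(1)$ arcs of length $\asymp \delta n_2$. By $\delta$-separation, the inner endpoints $y_k\in\partial\Lambda_{n_2}$ and outer endpoints $x'_k\in\partial\Lambda_{3n_2}$ each fall into $j$ distinct such arcs, one per arm, with the same counterclockwise cyclic order as the types $\sigma_k$. Fix such a choice of arcs $(I_k)$, $(I'_k)$; there are only $O_{\sigma,\delta}(1)$ configurations, so it is enough to bound the contribution of a single choice. For this choice one draws $j$ pairwise disjoint topological tubes $T_k\subset \Lambda_{3n_2}\setminus \Lambda_{n_2}$ joining $I_k$ to $I'_k$ (possible by $\delta$-separation), each of bounded aspect ratio, hence of discrete extremal length uniformly bounded. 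Theorem~\ref{thm:main-thm} then guarantees that, inside each $T_k$ and \emph{uniformly} with respect to the boundary conditions induced by the configurations outside $T_k$, a $\sigma_k$-crossing of $T_k$ occurs with probability at least some $c(\sigma,\delta)>0$. Intersecting these crossings over $k=1,\dots,j$ produces a gluing event $G$ whose conditional probability, given any configuration outside $\Lambda_{3n_2}\setminus\Lambda_{n_2}$, is bounded below by $c(\sigma,\delta)^j$.

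Finally, I combine the three pieces through the spatial Markov property. Conditioning on $\mathcal{F}_{\mathbb{Z}^2\setminus\Lambda_{3n_2}}\vee \mathcal{F}_{\Lambda_{n_2}}$, the configuration in the annulus $\Lambda_{3n_2}\setminus \Lambda_{n_2}$ is an FK-Ising measure with some boundary conditions, to which the lower bound for $G$ just obtained applies. Using \eqref{mixing2} (and its dual for the outer event) one has
\[
\phi_{\mathbb{Z}^2}[A^{\rm sep}_\sigma(n_1,n_2) \mid \mathcal{F}_{\mathbb{Z}^2\setminus \Lambda_{2n_2}}] \asymp \phi_{\mathbb{Z}^2}[A^{\rm sep}_\sigma(n_1,n_2)],
\]
so integrating over the outer configuration realizing $A^{\rm sep}_\sigma(3n_2,n_3)$ and over the inner configuration realizing $A^{\rm sep}_\sigma(n_1,n_2)$ with endpoints in the chosen arcs, then summing over the $O_{\sigma,\delta}(1)$ choices of arcs, gives the claimed inequality.

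The step I expect to be the bottleneck is the gluing: one must check that disjoint tubes of uniformly bounded aspect ratio can always be drawn between the well-chosen boxes on $\partial\Lambda_{n_2}$ and $\partial\Lambda_{3n_2}$ compatibly with the cyclic order of the $\sigma_k$, and then control the crossing probabilities of these tubes under the arbitrary (possibly very rough) boundary conditions induced by conditioning on the inner and outer realizations. It is precisely here that the uniform-in-boundary-conditions RSW bound of Theorem~\ref{thm:main-thm} is essential; the weaker form of \cite{DHN11} for standard rectangles with specific boundary conditions would not suffice.
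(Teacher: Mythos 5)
Your proposal is correct and follows essentially the same strategy as the paper: condition on the inner and outer well-separated events on disjoint concentric regions, glue through the intermediate annulus via disjoint tubes whose $\sigma_k$-crossings are controlled uniformly by RSW, factorize using the mixing estimate \eqref{mixing2}, and finally restore the inner radius of the outer event via Lemma~\ref{extend}. The arc-discretization you introduce is a harmless over-complication (the paper simply conditions on the realized endpoints $y_k$, $x_k$ and draws tubes for each realized configuration), and the choice of $3n_2$ rather than $2n_2$ is cosmetic. One small disagreement with the paper: you assert that the tube-crossing step is ``precisely'' where the full strength of Theorem~\ref{thm:main-thm} is needed and that \cite{DHN11} ``for standard rectangles with specific boundary conditions would not suffice''; in fact \cite{DHN11} already gives bounds uniform in the boundary conditions (only the rectangle must be standard), and the paper explicitly notes at this exact step that the weaker result of \cite{DHN11} would be sufficient, since the tubes can be taken to be unions of standard rectangles. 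The full Theorem~\ref{thm:main-thm} is genuinely essential elsewhere in this section (e.g.\ in Lemma~\ref{lem:T arms} and Lemma~\ref{separation}, where one must cross a region with one very rough side), but not here.
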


\begin{proof}
  We have
  \begin{align*}\phi_{\mathbb Z^2}\big[A_{\sigma}^{\rm sep}(n_1,n_2) \cap  A_{\sigma}^{\rm sep}(2n_2,n_3)
  \big] &\ = \phi_{\mathbb Z^2}\big[ A_{\sigma}^{\rm sep}(n_1,n_2)  | A_{\sigma}^{\rm sep}(2n_2,n_3)\big] \cdot \phi_{\mathbb Z^2}\big[
  A_{\sigma}^{\rm sep}(2n_2,n_3)\big]\\
 &\ges \phi_{\mathbb Z^2}\big[ A_{\sigma}^{\rm sep}(n_1,n_2)  \big] \cdot \phi_{\mathbb Z^2}\big[
  A_{\sigma}^{\rm sep}(2n_2,n_3)\big]\\
 &\ges \phi_{\mathbb Z^2}\big[ A_{\sigma}^{\rm sep}(n_1,n_2)  \big] \cdot \phi_{\mathbb Z^2}\big[
  A_{\sigma}^{\rm sep}(n_2,n_3)\big] \end{align*}
  thanks to \eqref{mixing2} and Lemma~\ref{extend}. Thus, it is sufficient to prove that
  \begin{equation} \label{x quasi sep}
  \phi_{\mathbb Z^2}\big[A_{\sigma}^{\rm sep}(n_1,n_3)\big]\ges\phi_{\mathbb Z^2}\big[A_{\sigma}^{\rm sep}(n_1,n_2) \cap A_{\sigma}^{\rm sep}(2n_2,n_3)\big].
  \end{equation}
  To do so, condition on $A_{\sigma}^{\rm sep}(n_1,n_2) \cap
  A_{\sigma}^{\rm sep}(2n_2,n_3)$ and construct $j$ disjoint tubes of width
  $\ep=\ep(\sigma,\delta)$ connecting $\Lambda_{\delta n_2}(y_k)\setminus
  \Lambda_{n_2}$ to $\Lambda_{2\delta n_2}(x_k)\cap \Lambda_{2n_2}$ for
  every $k\le j$. It easily follows from topological considerations that this is possible if $\delta$ is small enough.

  Via Theorem~\ref{strong RSW}, the arms of type
  $\sigma_k$ connecting $x_k$ to $\partial \Lambda_{2\delta
    n_2}(x_k)\cap \Lambda_{n_2}$, and $y_k$ to $\partial \Lambda_{\delta
    n_2}(y_k)\setminus \Lambda_{n_2}$ can be connected by an arm of type $\sigma_k$ staying in the corresponding tube with probability
  bounded from below by $c=c(\sigma,\delta)>0$ uniformly in everything outside these tubes, thanks to Theorem~\ref{strong RSW} (in fact, the weaker result of \cite{DHN11} would be sufficient here).
  Therefore,
  $\phi_{\mathbb Z^2}[A_{\sigma}^{\rm sep}(n_1,n_3)] \ge c\phi_{\mathbb Z^2}\big[A_{\sigma}^{\rm sep}
  (n_1,n_2) \cap A_{\sigma}^{\rm sep}(2n_2,n_3)\big]$. 
\end{proof}

\begin{remark} Proposition~\ref{quasi_sep} has the following consequence. Fix a sequence $\sigma$ and let $\delta$ be small enough. There exists
$\alpha=\alpha(\sigma,\delta)>0$ such that, for any $n_1<n_2<n_3$,
\begin{align}\label{extensibility}
  \phi_{\mathbb Z^2}\big[A_{\sigma}^{\rm sep}(n_1,n_2)\big] &\les \!
  \left({n_3}/{n_2}\right)^{\alpha} \!\cdot
  \phi_{\mathbb Z^2}\big[A_{\sigma}^{\rm sep}(n_1,n_3)\big],\\
  \phi_{\mathbb Z^2}\big[A_{\sigma}^{\rm sep}(n_2,n_3)\big] &\les \!
  \left({n_2}/{n_1}\right)^{\alpha} \!\cdot
  \phi_{\mathbb Z^2}\big[A_{\sigma}^{\rm sep}(n_1,n_3)\big].\label{extensibility inter}
\end{align}
Indeed, to prove (\ref{extensibility}),(\ref{extensibility inter}) it is sufficient to get an a priori bound $\phi_{\mathbb Z^2}[A_{\sigma}^{\rm sep}(n,N)]\ge (n/N)^{\alpha}$ for some $\alpha>0$ which can be done similarly to Proposition~\ref{a priori}.
\end{remark}

The well-separation is a powerful tool to glue arms together, but it is useful only if arms are typically well-separated. The next proposition will therefore be crucial for our study.
\begin{proposition}\label{comparison_sep_non_sep}
 Fix a sequence $\sigma$ and let $\delta$ be small enough. For any $n<N$, we have
  $\phi_{\mathbb Z^2}\big[A_{\sigma}^{\rm sep}(n,N)\big] \asymp \phi_{\mathbb Z^2}\big[A_{\sigma}(n,N)
  \big],$
  where the constants in $\asymp$ depend on $\sigma$ and $\delta$ only.
\end{proposition}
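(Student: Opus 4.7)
The inclusion $A_{\sigma}^{\rm sep}(n,N) \subset A_{\sigma}(n,N)$ gives one direction for free, so the content lies in showing $\phi_{\mathbb Z^2}[A_{\sigma}(n,N)] \lesssim \phi_{\mathbb Z^2}[A_{\sigma}^{\rm sep}(n,N)]$. I plan to follow the classical Kesten-style separation argument, broken into independent outer and inner separation statements. Concretely, let $A_{\sigma}^{\rm out\text{-}sep}(n,N)$ denote the event that $A_{\sigma}(n,N)$ holds and the arms realizing it satisfy only the two conditions at $\partial\Lambda_N$ in the definition of $A_{\sigma}^{\rm sep}$, and define $A_{\sigma}^{\rm in\text{-}sep}(n,N)$ analogously at $\partial\Lambda_n$. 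I will first show $\phi_{\mathbb Z^2}[A_{\sigma}^{\rm out\text{-}sep}(n,N)]\gtrsim\phi_{\mathbb Z^2}[A_{\sigma}(n,N)]$, then argue the symmetric inner estimate, and finally combine the two: since the outer fences are supported in $\Lambda_N \setminus \Lambda_{N/2}$ and the inner fences in $\Lambda_{2n} \setminus \Lambda_n$, these are disjoint regions, and the conditional crossing estimate \eqref{mixing2} together with FKG yields the combination with a loss of at most a constant depending on $\sigma$ and $\delta$.

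For the outer separation, I fix $j$ pre-specified arcs $I_1,\ldots,I_j$ on $\partial\Lambda_N$ in the counterclockwise order required by $\sigma$, each of length $\geq 4\delta N$ and pairwise separated by at least $4\delta N$. To each $I_k$ I attach, in the annulus $\Lambda_N\setminus\Lambda_{N/2}$, a ``landing tube'' $T_k$ of width $\varepsilon(\delta)$ connecting a designated arc on $\partial\Lambda_{N/2}$ to $I_k$, together with an ``extension fence'' $F_k$ inside $\Lambda_{\delta N}(z_k)\setminus\Lambda_N$ for some $z_k\in I_k$, designed to carry a $\sigma_k$-path of length $\delta N$. Applying Theorem~\ref{strong RSW} in each $T_k$ and each $F_k$, viewed as topological rectangles of bounded extremal length, and uniformly over the arbitrary boundary conditions inherited from the rest of the plane, gives that each tube contains a $\sigma_k$-crossing and each fence the required extension with probability at least $c(\sigma,\delta)>0$. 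These events are increasing or decreasing per independent component and can be combined by FKG.

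It remains to join the $T_k$-crossings to genuine realizations of $A_\sigma(n,N)$. Conditioning on $\mathcal F_{\Lambda_{N/2}}$ on the event $A_{\sigma}(n,N/2)$ (which by Proposition~\ref{a priori} and \eqref{mixing2} has probability $\asymp \phi_{\mathbb Z^2}[A_{\sigma}(n,N)]$ up to constants), I obtain arms of types $\sigma_k$ reaching $\partial \Lambda_{N/2}$ at possibly poorly separated points $y'_1,\ldots,y'_j$ in the right cyclic order. I then invoke Theorem~\ref{strong RSW} a further time, inside a thin annulus adjacent to $\partial\Lambda_{N/2}$, to build $\sigma_k$-paths routing each $y'_k$ to the mouth of $T_k$ while respecting the cyclic order. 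Combining these with the tube crossings and the fences produces arms realizing $A_{\sigma}^{\rm out\text{-}sep}(n,N)$ with probability $\gtrsim c(\sigma,\delta)$ conditionally on the inside, proving the outer estimate.

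The main obstacle is this last topological re-routing step: the explored inner region has an arbitrary, possibly very rough, boundary, and the routing rectangles inherit arbitrary boundary conditions from the conditioning. This is precisely where the strength of Theorem~\ref{strong RSW}, which gives crossing bounds in \emph{arbitrary} topological rectangles with \emph{arbitrary} boundary conditions, is needed: the weaker RSW bound of \cite{DHN11} for standard rectangles would not suffice to handle the irregular geometry arising from conditioning. The inner separation is obtained by the same argument applied at scale $n$, and combining the two completes the proof.
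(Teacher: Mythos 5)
Your one-scale routing step has a genuine gap, and it is exactly the gap that forces the paper to use a multi-scale argument instead.

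You condition on $\mathcal F_{\Lambda_{N/2}}$ on $A_\sigma(n,N/2)$, obtain arms reaching $\partial\Lambda_{N/2}$ at points $y'_1,\dots,y'_j$ that may be ``poorly separated,'' and then claim that a further application of Theorem~\ref{strong RSW} in a thin annulus routes each $y'_k$ to the mouth of a pre-specified tube $T_k$ with probability bounded below by a constant depending only on $\sigma$ and $\delta$. This cannot be done. If, say, two arms of complementary type arrive one lattice step apart, the two extension paths needed to steer them to well-separated targets are forced through a region whose extremal length blows up as the arrival points coalesce. Theorem~\ref{strong RSW} is uniform only in the boundary conditions and the domain shape \emph{at fixed extremal length}; it gives no uniform lower bound when the relevant topological rectangle has unboundedly large extremal length. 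So the routing probability degenerates as the $y'_k$ cluster, and there is no $c(\sigma,\delta)>0$ making your last step work. Acknowledging that the conditioned boundary is rough does not address this: the obstruction is not roughness of the boundary, it is the unbounded modulus of the rectangles you would need to cross. There is also a secondary issue: your claim that $\phi_{\mathbb Z^2}[A_\sigma(n,N/2)]\asymp\phi_{\mathbb Z^2}[A_\sigma(n,N)]$ ``by Proposition~\ref{a priori} and~\eqref{mixing2}'' is not justified --- this comparability at consecutive scales is itself a quasi-multiplicativity statement, and deriving it from Proposition~\ref{comparison_sep_non_sep} would be circular.

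The paper's proof is structured to sidestep exactly the routing obstruction you run into. Instead of steering arms to pre-specified landing sites at a single scale, it works with the \emph{local} events $B_s$ (``the arms crossing the dyadic annulus at scale $2^{2s}$ cannot be replaced by $\delta$-well-separated ones within their own clusters'') and proves via Lemma~\ref{lem:T arms} and Lemma~\ref{separation} that each $B_s$ has probability at most a small $\ep$, uniformly in the boundary conditions, by shielding and extending arm endpoints inside nested annuli around those endpoints --- a construction that stays at bounded aspect ratio, where Theorem~\ref{strong RSW} really does give uniform constants. It then sums over the first and last scales at which separation succeeds, factoring via Proposition~\ref{mixing} and using the a priori extensibility bounds \eqref{extensibility}, \eqref{extensibility inter} (which rest on Proposition~\ref{quasi_sep}, proved independently of the present proposition) to beat the geometric factors, and takes $\ep$ small to make the resulting series converge. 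You would need to replace your routing step by something of this form; a single invocation of RSW at scale $N$ cannot compensate for arbitrarily badly separated arrivals.
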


Let us start with the following two lemmas.

\begin{lemma}\label{lem:T arms}
For each $\ep>0$, there exists $T=T(\ep)>0$ such that, for any $n\ge 1$ and any boundary conditions $\xi$,
$$\phi^\xi_{\Lambda_{2n}\setminus\Lambda_n}\big[\mathit{there\ exist}~ T\mathit{\ disjoint\ arms\ crossing\ }\Lambda_{2n}\setminus\Lambda_n\big]\le \ep.$$
\end{lemma}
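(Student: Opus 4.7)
The plan is to reduce to monochromatic arms via pigeonhole and self-duality, and then to obtain a geometric decay of the probability of having many disjoint open crossings by a canonical exploration combined with Theorem~\ref{strong RSW}.

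First I would apply pigeonhole: among any $T$ disjoint arms, at least $\lceil T/2\rceil$ must be of the same type, and by the self-duality of the critical FK-Ising model (Section~\ref{sub:planar-duality}) we may assume this common type is $1$. Since ``at least $k$ disjoint open crossings exist'' is increasing, the monotonicity in boundary conditions (Remark~\ref{rem:monotonicity}) lets us further restrict to the fully wired case $\xi=\partial(\Lambda_{2n}\setminus\Lambda_n)$. The base case of the iteration is the single-crossing estimate: viewing $\Lambda_{2n}\setminus\Lambda_n$ as a topological rectangle by slitting it along any radial segment, its extremal length between the two boundary circles is uniformly bounded by an absolute constant, so Theorem~\ref{strong RSW}(ii) yields an absolute $\eta_0\in(0,1)$ with $\phi^{\mathrm{wired}}[\partial\Lambda_n\leftrightarrow\partial\Lambda_{2n}]\le 1-\eta_0$.

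Next I would establish $\phi^{\mathrm{wired}}[\ge k\text{ disjoint open crossings}]\le (1-\eta_0)^k$ by a canonical exploration. Fix a reference vertex $v_0\in\partial\Lambda_n$, orient $\partial\Lambda_n$ clockwise from $v_0$, and for $i\ge 1$ let $\gamma_i$ denote the open crossing whose endpoint on $\partial\Lambda_n$ comes first, in this order, after that of $\gamma_{i-1}$ (with $\gamma_0$ set to $v_0$). Then the event $\{\ge k+1\text{ disjoint open crossings}\}$ precisely requires that the next arm $\gamma_{k+1}$ exist. By the spatial Markov property, conditionally on $\gamma_1,\ldots,\gamma_k$ the configuration in the remaining sector $S$---the topological rectangle bounded by $\gamma_k$, $\gamma_1$, and the surviving arcs of $\partial\Lambda_n,\partial\Lambda_{2n}$ (the one containing $v_0$ on its $\partial\Lambda_n$-side)---is a critical FK-Ising law with wired boundary conditions on all four sides. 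Moreover, by the monotonicity of discrete extremal length under the addition of internal slits, the extremal length of $S$ between its $\partial\Lambda_n$- and $\partial\Lambda_{2n}$-arcs is at least that of the ambient annulus, hence remains bounded below by an absolute positive constant. Theorem~\ref{strong RSW}(ii) applied to $S$ then gives $\phi[\exists\gamma_{k+1}\mid\gamma_1,\ldots,\gamma_k]\le 1-\eta_0$ uniformly in the realizations of the previously discovered arms, and iterating this bound $T$ times produces the desired geometric decay.

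The main technical subtlety is justifying that this uniform crossing bound for $S$ holds irrespective of the (potentially very twisted) shapes of the previously explored arms. This reduces to a monotonicity statement for discrete extremal length: adding the internal slits $\gamma_1,\ldots,\gamma_k$ can only increase the effective resistance between any two boundary arcs of $\partial\Lambda_n$ and $\partial\Lambda_{2n}$ in $\Lambda_{2n}\setminus\Lambda_n$, so the modulus of $S$ stays controlled by that of the ambient annulus and a single absolute $\eta_0$ suffices at every step of the induction. Combining this geometric bound with the monochromatic reduction then yields the lemma.
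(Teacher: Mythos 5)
Your high-level strategy is different from the paper's, and the core exploration step has a genuine gap.

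First, the two decompositions differ: the paper pigeonholes the $T$ arms into the four rectangles of the form $[-2n,2n]\times[n,2n]$ (so $\ge T/4$ disjoint crossings, \emph{of either type}, cross one rectangle), whereas you pigeonhole by type to get $\ge T/2$ disjoint crossings of a single colour. Both pigeonholes are fine in themselves, and the reduction to wired boundary conditions (and the slit-annulus computation of the base modulus) are also fine.

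The problem is the ``canonical exploration.'' You define $\gamma_i$ as ``the open crossing whose endpoint on $\partial\Lambda_n$ comes first after that of $\gamma_{i-1}$'' and then assert (a) that $\{\ge k+1 \text{ disjoint open crossings}\}$ is precisely $\{\gamma_{k+1}\text{ exists}\}$ and (b) that conditioning on $\gamma_1,\ldots,\gamma_k$ leaves the law in $S$ unconditioned with wired boundary conditions via the domain Markov property. Neither is justified. For (a): picking crossings by the clockwise order of their starting vertices does not produce disjoint paths---several ``crossings'' starting at neighbouring vertices of $\partial\Lambda_n$ can immediately merge and share edges, and conversely $k+1$ disjoint arms may all belong to one cluster whose exploration would give a single $\gamma_1$; the event you want is about a \emph{maximal disjoint family}, and the clockwise-endpoint ordering does not extract one. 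For (b): the domain Markov property requires conditioning on the \emph{full configuration} in $\Omega\setminus S$, i.e.\ you must reveal all edges swept during the exploration, not just the paths $\gamma_i$. As stated, $\gamma_1,\ldots,\gamma_k$ alone do not determine the configuration between them, so the conditional law in $S$ is not a priori a random-cluster measure. Making this rigorous requires the leftmost $k$-fold crossing machinery (Menger/max-flow style: explore the leftmost disjoint $k$-tuple, reveal everything to its left, then iterate), which you do not invoke. There is also a minor annular-topology subtlety: your exploration wraps around the annulus, and the cyclic order of endpoints does not give a well-ordered ``left-to-right'' structure in which a remaining unexplored sector is canonical; the paper avoids this entirely by working in genuine rectangles.

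The paper's proof sidesteps all of this: it conditions on the $k$-th \emph{leftmost} crossing $\gamma_k$ in the rectangle $R_n=[-2n,2n]\times[n,2n]$ (a one-sided, explorable object) and then blocks \emph{any} further vertical crossing, primal or dual, by producing \emph{both} a primal horizontal crossing of $\Omega_1$ and a dual horizontal crossing of $\Omega_2$ to the right of $\gamma_k$, using Theorem~\ref{strong RSW} in the rough domain $\Omega$. This gives the geometric decay directly without ever needing to separate arm types or to set up an iterated conditional exploration with disjointness bookkeeping. Your monotonicity-of-extremal-length observation (removing slits only increases resistance) is correct and is indeed the right reason the crossing bound stays uniform in the shape of the explored region, but you would still need to repair the exploration to make the iteration legitimate.
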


\begin{proof}
If $T$ arms are crossing $\Lambda_{2n}\setminus\Lambda_n$, then at least $T/4$ arms are actually crossing one of the four rectangles $[-2n,2n]\times[n,2n]$, $[-2n,2n]\times[-2n,-n]$, $[n,2n]\times[-2n,2n]$ and $[-2n,-n]\times[-2n,2n]$.
By symmetry, it is sufficient to show that, for each $\ep>0$, there exists $T>0$ such that the probability of $T$ disjoint vertical crossings of the rectangle
\[
R_n:=[-2n,2n]\times[n,2n]
\]
is bounded by $\ep$ uniformly in $n$ and boundary conditions. In fact, we only need to prove that conditionally on the existence of $k$ crossings, the probability of existence of an additional crossing is bounded from above by some constant $c<1$, since the probability of $T$ crossings is then bounded by $c^{T-1}$.

In order to prove this statement, condition on the $k$-th leftmost crossing $\gamma_k$. Assume without loss of generality that $\gamma_k$ is a dual crossing. Consider the connected component $\Omega$ of $R_n\setminus \gamma_k$ containing the right-hand side of $R_n$. The configuration in $\Omega$ is a random-cluster configuration with boundary conditions $\xi$ on $\partial R_n\cap\partial\Omega$ and free elsewhere ({i.e.} on the arc bordering the dual crossing $\gamma_k$). Now, Theorem~\ref{strong RSW} implies that $\Omega$ is crossed from left to right by a primal {\em and} a dual crossing with probability bounded from below by a universal constant. Indeed, it suffices to cut $\Omega$ into two domains $\Omega_1=\Omega\cap ([-2n,2n]\times[n,\frac{3}{2}n])$ and $\Omega_2=\Omega\cap ([-2n,2n]\times[\frac{3}{2}n,2n])$ and to assume that $\Omega_1$ is horizontally crossed and $\Omega_2$ is horizontally dual crossed. This prevents the existence of an additional vertical crossing or dual crossing of $R_n$, therefore implying the claim.
\end{proof}

\begin{remark}The previous proof harnesses Theorem~\ref{strong RSW} in a crucial way, the left boundary of $\Omega$ being possibly very rough.
Crossing estimates for standard rectangles (even with uniform boundary conditions) would not have been strong enough for this purpose. \end{remark}

Let $\delta>0$ and $n\ge1$. Define $B_n=B_n(\delta)$ to be the event that, for some sequence~$\sigma$, the annulus $\Lambda_{2n}\setminus\Lambda_n$ is crossed by disjoint arms $\gamma_1,\dots,\gamma_j$ of type $\sigma_1,\dots,\sigma_j$ but there is no $\delta$-well-separated arms $\widetilde\gamma_1,\dots,\widetilde\gamma_j$ of type $\sigma_1,\dots,\sigma_j$ such that $\widetilde\gamma_k$ is in the $\sigma_k$-cluster of $\gamma_k$ for every $k=1,\ldots,j$ ($\sigma_k$-cluster means primal cluster if $\sigma_k=1$ and dual cluster otherwise).

\begin{figure}
\begin{center} \includegraphics[width=0.80\textwidth]{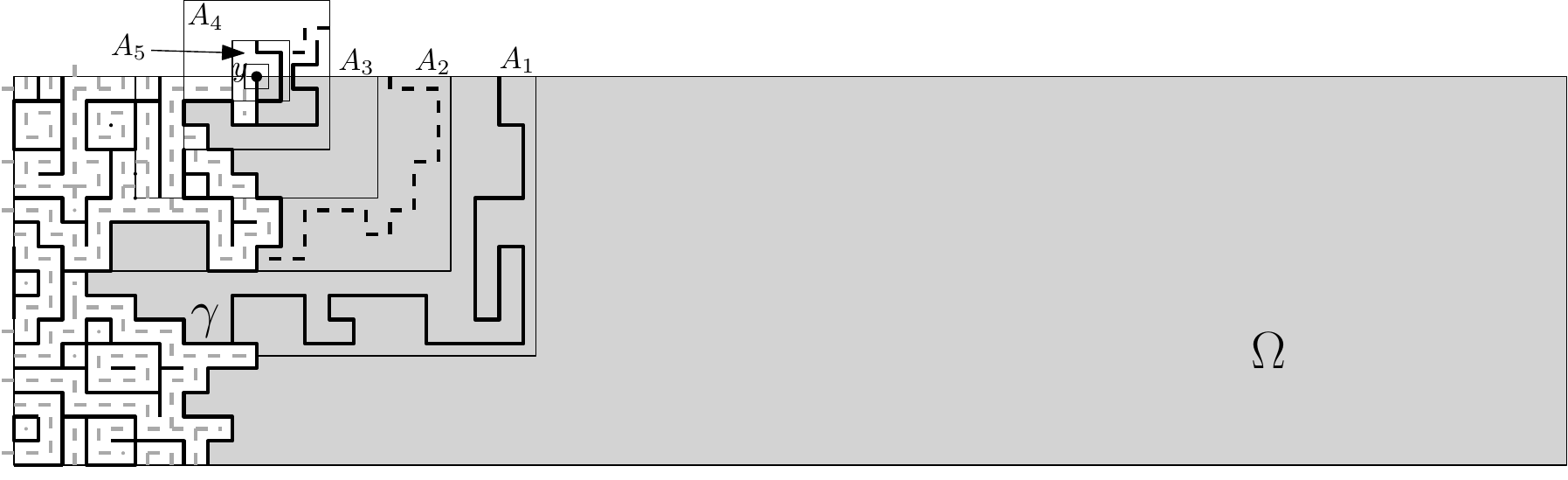}\end{center}
\caption{\label{fig:separation}The construction of open and closed paths extending the leftmost crossing $\gamma$ of $R_n$ and preventing other crossings from finishing close to $\gamma$.}
\end{figure}

\begin{lemma}\label{separation}
Let $\ep>0$. There exists $\delta>0$ such that $\phi_{\mathbb Z^2}(B_n)\le\ep$ for any $n\ge1$.
\end{lemma}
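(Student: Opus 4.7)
The plan is to combine the a priori bound on the number of arms given by Lemma \ref{lem:T arms} with a multiscale fence construction at each arm endpoint, for which Theorem \ref{strong RSW} is crucial. Applying Lemma \ref{lem:T arms} with threshold $\ep/2$ yields $T=T(\ep/2)$ such that, except on an event of probability at most $\ep/2$, at most $T$ disjoint arms cross the annulus $\Lambda_{2n}\setminus\Lambda_n$. On this good event, only type-sequences $\sigma$ of length $j\le T$ can appear in the definition of $B_n$, and the number of such sequences is bounded by $2^{T+1}$. By a union bound it therefore suffices to show, for each fixed $\sigma$ of length $j\le T$, that
$$\phi_{\mathbb Z^2}\!\left[A_\sigma(n,2n)\setminus A_\sigma^{\mathrm{sep}}(n,2n)\right]\ \le\ \ep\cdot 2^{-(T+2)}$$
for $\delta$ small enough, uniformly in $n$.

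For a fixed $\sigma$, I would construct the required $\delta$-well-separated arms endpoint-by-endpoint. Focus on an outer endpoint $y_k\in\partial\Lambda_{2n}$ of $\gamma_k$ (the inner endpoints $x_k\in\partial\Lambda_n$ are treated symmetrically). Consider dyadic scales $r_m:=2^{-m}\cdot 2\delta n$ for $m=0,1,\ldots,M$ with $M=\lfloor\log_2(\delta n)\rfloor$, and define at each scale the event $F_m$ that there exists a $\sigma_k$-path inside the half-annulus $(\Lambda_{2r_m}(y_k)\setminus\Lambda_{r_m}(y_k))\cap\Lambda_{2n}$ linking the $\sigma_k$-cluster of $\gamma_k$ to a point at depth $r_m$ from $\partial\Lambda_{2n}$ contained in $\Lambda_{r_m}(y_k)$, and disjoint from the traces of the other $j-1$ arms. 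Conditioning on the configuration outside the outer box and on those traces, the remaining region is a topological rectangle whose extremal length is uniformly controlled by a constant depending only on $j$; Theorem \ref{strong RSW} then yields that the conditional probability of $F_m$ is at least some $c=c(j)>0$, uniformly in the exterior data. The quasi-independence estimate \eqref{mixing2} promotes this into genuine near-independence across well-separated dyadic scales, so that the probability of failure at \emph{all} scales is at most $(1-c')^{M/2}$, which can be made $\le\ep\cdot 2^{-(T+2)}/j$ by taking $\delta$ small.

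Any successful scale $m$ furnishes a $\tilde y_k$ in the same $\sigma_k$-cluster as $\gamma_k$ together with the required local fence. Since the fences are localized in disjoint $O(\delta n)$-boxes around the $\tilde y_k$'s, they automatically enforce the macroscopic pairwise separation condition at scale $4\delta n$ (and similarly on the inner boundary). Treating the arms sequentially (adding each new fence into the already-cut domain) and invoking Theorem \ref{strong RSW} on the resulting topological rectangles with arbitrary rough boundary conditions yields the desired $\delta$-well-separated realization. The main technical obstacle is precisely this sequential bookkeeping for all $j$ arms simultaneously while preserving cluster identity, which is where the fully general estimate of Theorem \ref{strong RSW} (as opposed to the earlier RSW-type bound of \cite{DHN11} for standard rectangles) is essential, since the domains appearing at later steps have rough boundaries carved by the traces of the previously processed arms.
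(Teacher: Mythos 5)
Your overall strategy is the same as the paper's: use Lemma~\ref{lem:T arms} to restrict to at most $T$ arms, then run a multiscale construction around each endpoint, relying on the topological-rectangle version of Theorem~\ref{strong RSW} because the domains that arise have rough boundaries. However, there are two genuine gaps in how you implement the multiscale step.

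First, the claim that after ``conditioning on the configuration outside the outer box and on the traces of the other $j-1$ arms, the remaining region is a topological rectangle whose extremal length is uniformly controlled by a constant depending only on $j$'' is false. Nothing prevents another arm from ending at a vertex adjacent to $y_k$, so the region available around $y_k$ after carving out the other arm traces can have arbitrarily large extremal length at the scales you consider, and Theorem~\ref{strong RSW} then gives no useful lower bound for $F_m$. The paper avoids exactly this issue by \emph{never} conditioning on all the arm traces at once: it conditions only on the \emph{leftmost} vertical crossing of the rectangle $R_n$. Conditionally on that, the region $\Omega$ to its right carries an honest random-cluster measure with no further arm constraints, and the fence construction is carried out entirely inside $\Omega$. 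One then iterates, taking the next leftmost crossing in what remains, up to $T$ times. This is the crucial ordering idea that your proposal lacks.

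Second, your event $F_m$ only furnishes an \emph{extension} of the $\sigma_k$-cluster of $\gamma_k$ deeper into the annulus; it does not prevent the other arms from landing within distance $\delta n$ of $y_k$. Since $\tilde y_k\in\Lambda_{r_m}(y_k)$ with $r_m$ tiny, if $y_k$ and $y_{k'}$ happen to be close (which can occur with nonvanishing probability), your construction still produces nearby endpoints $\tilde y_k,\tilde y_{k'}$ and the first two bullet points in the definition of $\delta$-well-separated arms fail. The paper handles this by building two \emph{blocking circuits} around $y$ --- an open cross-cut of $\Omega\cap A_1$ and a dual-open cross-cut of $\Omega\cap A_2$ disconnecting $y$ from the right side of $R_n$ --- so that no crossing to the right of the leftmost one can land within distance $\delta^3 n$ of $y$. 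This enforced macroscopic separation is what makes the eventual separation distance $\delta^m n$ work. Without blocking circuits, your argument cannot establish the distance condition on the $\tilde y_k$'s, only the ``extension'' part of the definition.

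As a minor remark, the quasi-independence estimate \eqref{mixing2} you invoke is more machinery than needed: the paper obtains the $(1-c)^{m-3}$ failure bound simply by the domain Markov property and Theorem~\ref{strong RSW} applied scale by scale, since the construction in $A_k$ depends only on the edges in $A_k$.
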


\begin{proof}
Using Lemma~\ref{lem:T arms}, consider $T$ large enough so that more than $T$ disjoint arms in $\Lambda_{2n}\setminus\Lambda_n$ exist with probability less than $\ep$. From now on, we assume that there are at most $T$ disjoint arms crossing the annulus.

Fix $\delta>0$ such that uniformly in {\em any subdomain} $D\subset \Lambda_n\setminus\Lambda_{\delta n}$ and any boundary conditions on $\partial D$, there is no
crossing from $\partial\Lambda_{\delta n}$ to $\partial\Lambda_n$ in $D$ with probability $1-\ep$ \footnote{Note that this claim is slightly stronger than simply the fact that the annulus $\Lambda_n\setminus\Lambda_{\delta n}$ is not crossed. Indeed, even if the crossing is forced to remain in $D$, the boundary conditions on $\partial D$ could help the existence of a crossing.
}. The existence of such $\delta$ can be proved easily using Theorem~\ref{strong RSW}. We may therefore assume that  no arm ends at distance less than $\delta n$ of a corner of $\Lambda_{2n}\setminus\Lambda_n$ with probability $1-8\ep$.

Similarly to the proof of Lemma~\ref{lem:T arms}, let us restrict our attention to vertical crossings in the rectangle $R_n$.
Condition on the leftmost crossing $\gamma$ and set $y$ to be the ending point of $\gamma$ on the top. Without loss of generality, let us assume that this crossing is of type 1. As before, define $\Omega$ to be the connected component of the right side of $R_n$ in $R_n\setminus \gamma$.

For $k\ge1$, let $A_k=\Lambda_{\delta^k n}(y)\setminus\Lambda_{\delta^{k+1}n}(y)$. We can assume with probability $1-\ep/T$ that no vertical crossing lands at distance $\delta^3 n$ of $y$ by making the following construction:
\begin{itemize}
\item $\Omega\cap A_1$ contains an open path disconnecting $y$ from the right-side of $R_n$;
\item $\Omega\cap A_2$ contains a dual-open path disconnecting $y$ from the right-side of $R_n$.
\end{itemize}
By choosing $\delta>0$ small enough, Theorem~\ref{strong RSW} implies that the paths in this construction exist with probability $1-\ep/T>0$ independent of the shape of $\Omega$.

For each $k\geq 4$, we may also show that $\gamma$ can be extended to the top of $A_k$ by constructing an open path in $A_k\setminus(R_n\setminus \Omega)$ from $\gamma$ to the top of $A_k$ (this occurs once again with probability $c>0$ independently of $\Omega$ and the configuration outside $A_k$), see Fig.~\ref{fig:separation}. Therefore, the probability that there exists some $k\le m$ such that this happens is larger than $1-(1-c)^{m-3}$. We find that with probability $1-\ep/T-(1-c)^{m-3}$ the path $\gamma$ can be modified into a self-avoiding crossing which is well-separated (on the outer boundary) from any crossing on the right of it by a distance at least $(\delta^3-\delta^4)n$ and that this crossing is extended to distance at least $\delta^m$ above its end-point. We may choose $m$ large enough that the previous probability is larger than $1-2\ep/T$.
One may also do the same for the inner boundary. Iterating the construction $T$ times, we find that $\phi(B_n)\le12\ep$ with $\delta^m$ as a distance of separation.
\end{proof}

\begin{proof}[Proof of Proposition~\ref{comparison_sep_non_sep}]
The lower bound $\phi_{\mathbb Z^2}[A_{\sigma}^{\rm sep}(n,N)] \le \phi_{\mathbb Z^2}[A_{\sigma}(n,N)]$ is straightforward. Let $L$ and $K$ be such that $4^{L-1}<n\le 4^L$, $4^{K+1}\le N<4^{K+2}$ and define $\widetilde B_s:=B_{2^{2s}}$. Thanks to Lemma~\ref{separation}, we fix $\delta$ small enough so that $\phi_{\mathbb{Z}^2}(\widetilde B_s)\le \ep$ for all $L\leq s\leq K$.

We may decompose the event $A_{\sigma}(n,N)$ with respect to the smallest and largest scales at which the complementary event $\widetilde B_s^c$ occurs. This gives
\begin{align*}
\phi_{\mathbb Z^2}\big[A_{\sigma}(n,N)\big]\ &\le\ \phi_{\mathbb Z^2}\biggl[\,\bigcap\nolimits_{s=L}^K \widetilde B_s\,\biggr]\\
&+\sum_{L\le \ell\le k\le K}\phi_{\mathbb Z^2}\biggl[\,\bigcap\nolimits_{s=L}^{\ell-1}\widetilde B_s\,\cap\, \big[\widetilde  B_\ell^c\cap A_{\sigma}(2^{2\ell},2^{2k+1})\cap \widetilde B_k^c\big]\,\cap\, \bigcap\nolimits_{s=k+1}^K\widetilde B_{s}\,\biggr].
\end{align*}
By definition,
\[
\widetilde B_\ell^c\cap A_{\sigma}(2^{2\ell},2^{2k+1})\cap \widetilde B_k^c~\subset~ A_{\sigma}^{\rm sep}(2^{2\ell},2^{2k+1}).
\]
Since the annuli $\Lambda_{2^{2s+1}}\setminus\Lambda_{2^{2s}}$ are separated by macroscopic areas, we can use Proposition~\ref{mixing} repeatedly to find the existence of a constant $C>0$ such that
\begin{align*}\phi_{\mathbb Z^2}[A_{\sigma}(n,N)]\ &\le\ C^{K-L}\prod\nolimits_{s=L}^K \phi_{\mathbb Z^2}[\widetilde B_s] \\
& + \sum_{L\le \ell\le k\le K}C^{K-L-(k-\ell)}\prod\nolimits_{s=L}^{\ell-1}\phi_{\mathbb Z^2}[\widetilde B_s]\cdot
\phi_{\mathbb Z^2}[A_{\sigma}^\mathrm{sep}(2^{2\ell},2^{2k+1})]\cdot
\prod\nolimits_{s=k+1}^K \phi_{\mathbb Z^2}[\widetilde B_s].\end{align*}
Recall that $\phi_{\mathbb Z^2}[\widetilde B_s]\leq \ep$ for all $s$. Furthermore, \eqref{extensibility} and \eqref{extensibility inter} show that
$$ \phi_{\mathbb Z^2}\big[A_{\sigma}^{\rm sep}(2^{2\ell},2^{2k+1})\big]\les 2^{2\alpha(\ell-L+K-k)}\phi_{\mathbb Z^2}\big[A_{\sigma}^{\rm sep}(n,N)\big]$$
for some universal constant $\alpha>0$. Altogether, we find that
\begin{align*}\phi_{\mathbb Z^2}[A_{\sigma}(n,N)]&\les \phi_{\mathbb Z^2}[A_{\sigma}^{\rm sep}(n,N)]\cdot \biggl[(C\ep2^{2\alpha})^{K-L}+\sum_{L\le \ell\le k\le K}(C\ep2^{2\alpha})^{K-L-(k-\ell)}\,\biggr] \\ & \les \phi_{\mathbb Z^2}[A_{\sigma}^{\rm sep}(n,N)].\end{align*}
provided that $\ep$ is small enough, which can be guaranteed by taking $\delta$ small enough.
\end{proof}

\subsection{Quasi-multiplicativity and universal arm exponents}\label{sub:arm-exponents}

\begin{proof}[Proof of Theorem~\ref{quasi-multiplicativity}]
If $n_2\ge\frac{n_3}{2}$, the claim is trivial. 
For $n_1<n_2<\frac{n_3}{2}$, we have
\begin{align*}
\phi_{\mathbb Z^2}\big[A_{\sigma}(n_1,n_3)\big]~&\le\phi_{\mathbb Z^2}\big[A_{\sigma}(n_1,n_2)\cap A_{\sigma}(2n_2,n_3)\big]\\
&= \phi_{\mathbb Z^2}\big[A_{\sigma}(n_1,n_2)|A_{\sigma}(2n_2,n_3)\big] \cdot\phi_{\mathbb Z^2}\big[A_{\sigma}(2n_2,n_3)\big]\\
&\asymp~\phi_{\mathbb Z^2}\big[A_{\sigma}(n_1,n_2)\big] \cdot\phi_{\mathbb Z^2}\big[A_{\sigma}(2n_2,n_3)\big]\\
&\asymp~\phi_{\mathbb Z^2}\big[A_{\sigma}(n_1,n_2)\big] \cdot\phi_{\mathbb Z^2}\big[A_{\sigma}^{\rm sep}(2n_2,n_3)\big]\\
&\les \phi_{\mathbb Z^2}\big[A_{\sigma}(n_1,n_2)\big] \cdot\phi_{\mathbb Z^2}\big[A_{\sigma}^{\rm sep}(n_2,n_3)\big]\\
&\le \phi_{\mathbb Z^2}\big[A_{\sigma}(n_1,n_2)\big]\cdot \phi_{\mathbb Z^2}\big[A_{\sigma}(n_2,n_3)\big],
\end{align*}
where we used \eqref{mixing2} in the third line, Proposition~\ref{comparison_sep_non_sep} in the fourth, and \eqref{extensibility inter} in the fifth.

On the other hand,
\begin{align*}
\phi_{\mathbb Z^2}\big[A_{\sigma}(n_1,n_3)\big]~ 
&\ges~\phi_{\mathbb Z^2}\big[A_{\sigma}^{\rm sep}(n_1,n_2)\big] \cdot\phi_{\mathbb Z^2}\big[A_{\sigma}^{\rm sep}(2n_2,n_3)\big]\\
&\asymp~\phi_{\mathbb Z^2}\big[A_{\sigma}(n_1,n_2)\big] \cdot\phi_{\mathbb Z^2}\big[A_{\sigma}(2n_2,n_3)\big]\\
&\ge~\phi_{\mathbb Z^2}\big[A_{\sigma}(n_1,n_2)\big] \cdot\phi_{\mathbb Z^2}\big[A_{\sigma}(n_2,n_3)\big].
\end{align*}
where we used (\ref{x quasi sep}) and Proposition~\ref{comparison_sep_non_sep} in the first two lines.
\end{proof}

\begin{proof}[Proof of Corollary~\ref{loc}]
The proof is classical and uses Proposition~\ref{comparison_sep_non_sep}. We refer to \cite{Nol08} for more details.
\end{proof}

\begin{figure}
\begin{center} \includegraphics[width=0.50\textwidth]{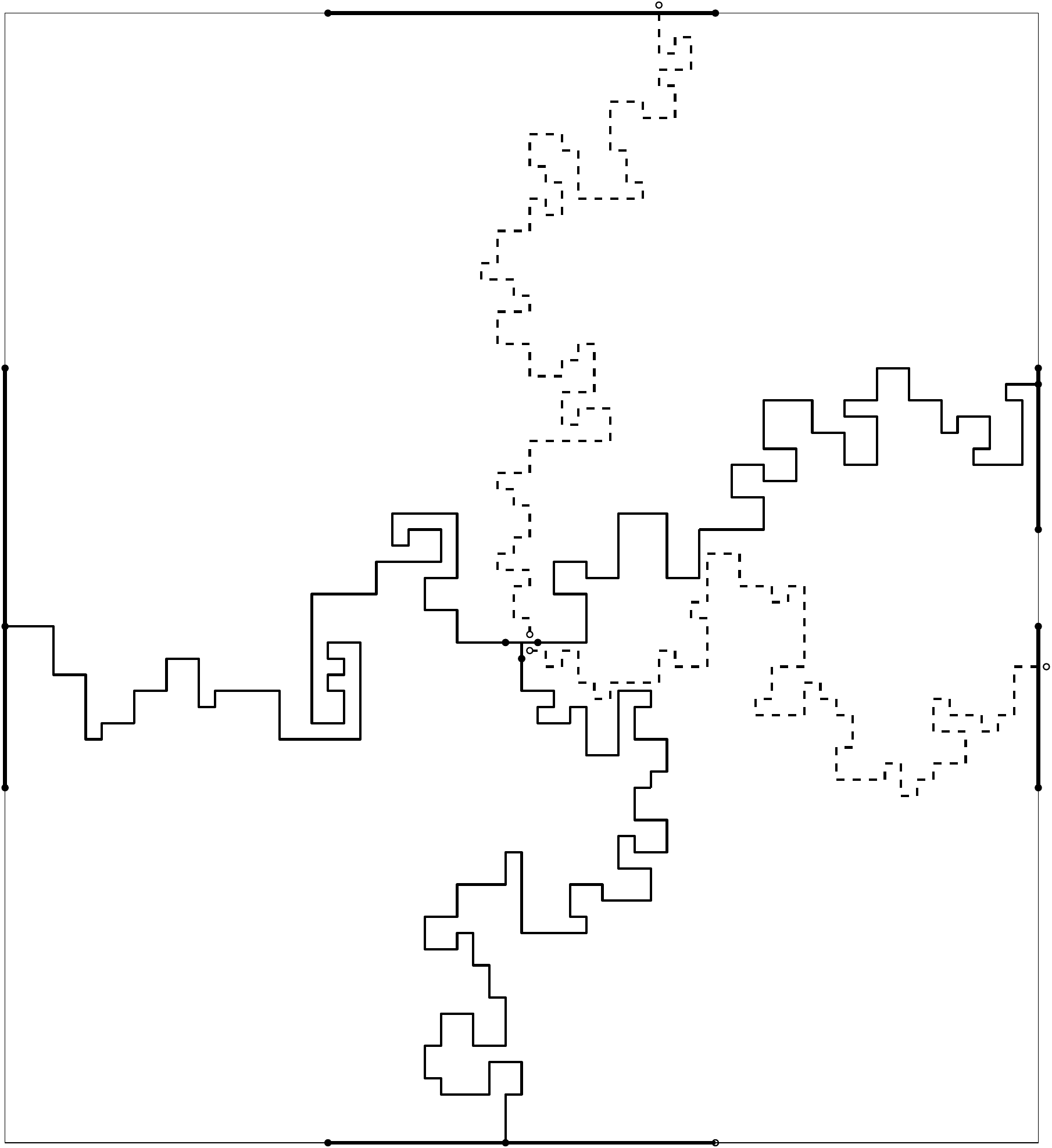}\end{center}
\caption{\label{fig:five_arm}Only one vertex per rectangle can satisfy the following topological picture.}
\end{figure}

\begin{proof}[Proof of Corollary~\ref{universal exponent}]
 We only give a sketch of the proof of the first statement; the others are derived from similar arguments (actually the arguments are slightly simpler).
By quasi-multiplicativity (Theorem~\ref{quasi-multiplicativity}), we only need to show that $ \phi_{\mathbb Z^2}\big[A_{10110}(0,N)\big] \asymp N^{-2}$.
\medbreak\noindent
 {\em Lower bound.}\ \  Fix $N>0$. Consider the
  following construction: assume that there exists a dual-open dual-path crossing
  $[-2N,2N]\times[-N,0]$ horizontally and an open path crossing
  $[-2N,2N]\times[0,N]$ horizontally. This happens with probability bounded from
  below by $c_1>0$ not depending on $N$. By conditioning on the lowest
 open self-avoiding  path $\Gamma$ crossing $[-2N,2N]\times [-N,N]$
    horizontally, the configuration in the domain $\Omega\subset\Lambda_{2N}$ above $\Gamma$ is a
  random-cluster configuration with wired boundary conditions on $\Gamma$ and undetermined boundary conditions on the other three sides (i.e. $\partial\Omega\cap\partial\Lambda_{2N}$).

Assume that (the uppermost connected component of) $\Omega\cap([-N,0]\times[-2N,2N])$ is crossed vertically by an open path, and, similarly,
$\Omega^*\cap ([\tfrac12,N-\tfrac12]\times[-2N+\tfrac12,2N-\tfrac12])$ is crossed vertically by a dual-open path. The probability of this event is once again bounded from below uniformly
  in $N$ and $\Omega$ thanks to Theorem~\ref{strong RSW}.

  {\em Here again, uniform crossing estimates for standard rectangles would not have been sufficiently strong to imply this result and Theorem~\ref{strong RSW} is absolutely necessary. }

  Summarizing, all these events occur with probability larger than
  $c_2>0$. Moreover, the existence of all these crossings implies the
  existence of a vertex in $\Lambda_{N}$ with five arms emanating from
  it, since one may observe that $\Omega\cap([-N,N]\times[-2N,2N])$ is crossed by both a primal and a dual vertical crossing, and that there exists $x$ on $\Gamma$ at the interface between two such crossings. Such an $x$ has five arms emanating from it and going to distance at least $N$\footnote{The path $\Gamma$ provides us with two primal paths going from $x$ to the boundary. Since $\Gamma$ is the lowest crossing of $[-N,0]\times[-2N,2N]$, there is an additional dual path below $\Gamma$. Finally, since $x$ is at the interface between a primal and a dual crossing above $\Gamma$, we obtain the two additional paths. Since $x$ is in $\Lambda_N$ and that arms connect $x$ and $\partial\Lambda_{2N}$, we deduce that these arms extend to distance at least $N$.}. The union bound implies $$N^2\phi_{\mathbb Z^2}[A_{10110}(0,N)]\ge c_2.$$
\medbreak\noindent
{\em Upper bound.}\ \  Recall that
  it suffices to show the upper bound for chosen
  landing sequences thanks to Corollary~\ref{loc}. Consider the event $A_x$, see Fig.~\ref{fig:five_arm}, that five mutually edge-avoiding arms $\gamma_1,\dots,\gamma_5$ of respective types 10110 are in such a way that
  \begin{itemize}
 \item $\gamma_1$ starts at $x$ and finishes on $\{N\}\times[\tfrac N4,\tfrac N2]$;
 \item $\gamma_2$ starts at $x+(\tfrac12,\tfrac12)$ and finishes on $[-\tfrac N2-\tfrac12,\tfrac N2+\tfrac12]\times\{N+\tfrac12\}$;
 \item $\gamma_3$ starts at $x$ and finishes on $\{-N\}\times[-\tfrac N2,\tfrac N2]$;
 \item $\gamma_4$ starts at $x$ and finishes on $[-\tfrac N2,\tfrac N2]\times\{-N\}$;
 \item $\gamma_5$ starts at $x+(\tfrac12,\tfrac12)$ and finishes on $\{N+\tfrac12\}\times[-\tfrac N2+\tfrac12,-\tfrac N4+\tfrac12]$.
 \end{itemize}
One may easily show  that $\phi[A_x]\asymp\phi[A_{10110}(0,N)]$ for every $x\in\Lambda_{N/2}$. In particular,
  $$N^2\phi_{\mathbb Z^2}[A_{10110}(0,N)]\asymp\sum_{x\in\Lambda_{N/2}}\phi_{\mathbb Z^2}[A_x]\le 1.$$
 The last inequality is due to the fact that the events $A_x$ are disjoint (topologically no two vertices in $\Lambda_{N}$
  can satisfy the events in question).
\end{proof}


\subsection{Spin-Ising crossing probabilities}
Recall that the FK-Ising model and the spin-Ising model are coupled, through the so-called Edwards-Sokal
coupling~\cite{ES88}. In the setup of Corollary~\ref{cor:spin-ising-crossing-bounds}, this coupling works as follows.
Let $(\Omega,a,b,c,d)$ be a topological rectangle. Consider a realization $\omega$ of the critical FK-Ising model on
$\Omega$ with boundary conditions $\xi=(ab)\cup(cd)$ (all vertices on $(bc)\cup(da)$ are wired together, all other boundary vertices are free). Let $\sigma\in\left\{\pm1\right\}^{\Omega}$ be the spin configuration obtained in the following manner:
\begin{itemize}
\item set the spins of all vertices belonging to the cluster containing $(bc)\cup(da)$ to $+1$;
\item for each of the other clusters, sample an independent fair $\pm1$ coin toss, and give that value to the spins of all vertices of this cluster.
\end{itemize}
Then $\sigma$ has the law of a critical spin-Ising configuration, with $+1$ boundary conditions on $(bc)\cup(da)$ and free boundary conditions elsewhere.

\begin{proof}[Proof of Corollary \ref{cor:spin-ising-crossing-bounds}]
For each $n_0>0$, without loss of generality, we may assume that the boundary arcs $(bc)$ and $(da)$ are distance from each other of at least $n_0$ lattice steps. Indeed, let us assume that $(bc)$ and $(da)$ are connected by a nearest-neighbor path $\gamma$ of length $n_0$. Note that the number of such paths is bounded from above by some constant $N=N(n_0,L)$ which does not depend on $(\Omega,a,b,c,d)$: if there are too many short paths connecting $(bc)$ and $(da)$, then $\ell_\Omega[(ab),(cd)]>L$. Therefore, it costs no more than some multiplicative constant (depending on $L$ and $n_0$ only) to assume that all spins along those short paths are $-1$. Let $\Omega_1,\ldots,\Omega_n$ denote the connected components of $\Omega$ appearing when all those parts are removed. By monotonicity of the spin-model with respect to boundary conditions, it is now enough to prove the claim of Corollary~\ref{cor:spin-ising-crossing-bounds} in each of $\Omega_k$ where the $+1$ boundary arcs are at least $n_0$ steps away from each other.

\smallskip

It follows from (\ref{ell self-duality}) that $\ell_\Omega[(bc),(da)]\gtrsim L^{-1}$. Provided that $n_0$ is chosen large enough, it is easy to split the topological rectangle $(\Omega,(bc),(da))$ into three \emph{connected} subdomains $(\Omega_1,(bc),(x_cx_b))$, $(\Omega_2,(x_bx_c),(x_dx_a))$, $(\Omega_3,(x_ax_d),(da))$ such that
\begin{equation}
\label{ell shares bound}
\min\{\,\ell_{\Omega_1}[(bc),(x_cx_b)],\,\ell_{\Omega_2}[(x_bx_c),(x_dx_a)],\,\ell_{\Omega_3}[(x_ax_d),(da)]\,\} \geq l(L)
\end{equation}
for some $l(L)>0$ independent of $(\Omega,a,b,c,d)$. E.g., one can use Theorem~\ref{thm:ell-Z-bounds} to get the upper bound on $\mathrm{Z}_{\Omega}[(bc),(da)]$, then apply Theorem~\ref{thm:existence-separators}(i) twice (with $k=1$), and use Theorem~\ref{thm:ell-Z-bounds} again to pass from upper bounds on the corresponding $\mathrm{Z}_{\Omega_k}$'s to (\ref{ell shares bound}). The other way to prove (\ref{ell shares bound}) (with $l(L)\asymp L^{-1}$) is to set $\Omega_k:=\{u\in\Omega: \frac{1}{3}(k\!-\!1)\leq V(u)<\frac{1}{3}k\}$, where $V$ is the electric potential in $\Omega$ (i.e. the harmonic function satisfying Neumann boundary conditions on $(ab)\cup(cd)$ and such that $V=0$ on $(bc)$, $V=1$ on $(da)$) and use definition~(\ref{ell definition}) with $g_{xy}:=|V(x)\!-\!V(y)|$ to deduce (\ref{ell shares bound}). Applying (\ref{ell self-duality}) again, we get
\[
\max\{\,\ell_{\Omega_1}[(x_bb),(cx_c)],\,\ell_{\Omega_2}[(x_ax_b),(x_cx_d)],\,\ell_{\Omega_3}[(ax_a),(x_dd)]\,\} \lesssim [l(L)]^{-1}.
\]

Now we use the Edwards-Sokal coupling between the critical spin-Ising and FK-Ising models on $\Omega$.
By Theorem \ref{thm:main-thm}(i) there exists $\alpha=\alpha(L)>0$ such that
\begin{itemize}
\item with probability at least $\alpha$ there exists \emph{no} FK open path from $(bc)$ to $(x_cx_b)$ in $\Omega_1$;
\item with probability at least $\alpha$ there \emph{exists} a FK open path from $(x_ax_b)$ to $(x_cx_d)$ in $\Omega_2$;
\item with probability at least $\alpha$ there exists \emph{no} FK open path from $(x_ax_d)$ to $(da)$ in $\Omega_3$.
\end{itemize}
So, with probability at least $\alpha^{3}$, we can guarantee that there is an FK-Ising crossing \mbox{$\gamma:\left(x_ax_b\right)\leftrightarrow\left(x_cx_d\right)$}
in $\Omega$, that does \emph{not} touch $\left(bc\right)$ and $\left(da\right)$.
Sampling a spin-Ising configuration from those FK-Ising configurations, we get that with probability at least $\frac{1}{2}\alpha^{3}$, there is
a $-1$ path from $(ab)$ to $(cd)$. Note that we need the fact that the FK~cluster of $\gamma$ is not connected to $\left(bc\right)\cup\left(da\right)$, thus its spin is defined by the fair coin toss.
\end{proof}

\begin{remark} If we consider the spin-Ising model with the following boundary conditions: $+1$ on $(bc)\cup(da)$, $-1$ on $(x_ax_b)\cup(x_cx_d)$ and free elsewhere, then, in the proof given above, it is sufficient to use the claim of Theorem~\ref{thm:main-thm} for alternating boundary conditions only. Again, by monotonicity of the spin-Ising model with respect to boundary conditions, this implies uniform bounds in terms of the discrete extremal length for the crossing probabilities in the critical spin-Ising model with ``$+1$/$-1$/$+1$/$-1$'' boundary conditions.
\end{remark}


\newcommand{\etalchar}[1]{$^{#1}$}
\providecommand{\bysame}{\leavevmode\hbox to3em{\hrulefill}\thinspace}
\providecommand{\MR}{\relax\ifhmode\unskip\space\fi MR }
\providecommand{\MRhref}[2]{%
  \href{http://www.ams.org/mathscinet-getitem?mr=#1}{#2}
}
\providecommand{\href}[2]{#2}

\end{document}